\newcommand{\suchthat}{\ | \ }
\newcommand{\marked}{\mathbb{M}}
\newcommand{\punct}{\mathbb{P}}
\newcommand{\bSigma}{\mathbf{\Sigma}}
\newcommand{\surf}{(\Sigma,\marked,\punct)}
\newcommand{\Sh}{\operatorname{Sh}}
\newcommand{\ind}{{\mathrm{ind}}}
\newcommand{\KRS}{\operatorname{KRS}}
\newcommand{\supp}{\operatorname{supp}}
\newcommand{\abs}[1]{\lvert #1\rvert}
\newcommand{\ebrace}[1]{\langle\!\langle #1\rangle\!\rangle}
\newcommand{\field}{\Bbbk}
\newcommand{\DecIrr}{\operatorname{DecIrr}}
\newcommand{\dimv}{\underline{\dim}}   
\newcommand{\Hom}{\operatorname{Hom}}
\newcommand{\prj}{{\operatorname{proj}}}
\newcommand{\op}{{\operatorname{op}}}  
\newcommand{\cLC}{\mathcal{LC}}
\newcommand{\Intn}{\operatorname{Int}}
\newcommand{\Lamin}{\operatorname{Lam}}
\newcommand{\Ker}{\operatorname{ker}}
\newcommand{\End}{\operatorname{End}}
\newcommand{\jacobalg}[1]{\mathcal{P}(#1)}
\newcommand{\Gr}{\operatorname{Gr}}
\newcommand{\FST}{\operatorname{FST}}
\newcommand{\MSW}{\operatorname{MSW}}
\newcommand{\sgn}{\text{sgn}}
\newcommand{\RN}[1]{%
 \textup{\uppercase\expandafter{\romannumeral#1}}%
}
\newcommand{\bd}{\mathbf{d}}
\newcommand{\be}{\mathbf{e}}  
\newcommand{\bg}{\mathbf{g}}
\newcommand{\bv}{\mathbf{v}}
\newcommand{\bw}{\mathbf{w}}
\newcommand{\bx}{\mathbf{x}}
\newcommand{\by}{\mathbf{y}}
\newcommand{\cA}{\mathcal{A}} 
\newcommand{\cB}{\mathcal{B}}
\newcommand{\cC}{\mathcal{C}}
\newcommand{\cP}{\mathcal{P}}
\newcommand{\cU}{\mathcal{U}}
\newcommand{\CC}{\mathbb{C}}     
\newcommand{\NN}{\mathbb{N}}
\newcommand{\ZZ}{\mathbb{Z}}
\newcommand{\alp}{\alpha}      
\newcommand{\bet}{\beta}
\newcommand{\gam}{\gamma}      
\newcommand{\del}{\delta}
\newcommand{\lam}{\lambda}      
\newcommand{\sig}{\sigma}
\newcommand{\Lam}{\Lambda}
\newcommand{\Sig}{\Sigma}      
\newcommand{\bSig}{\mathbf{\Sig}}      
\newcommand{\Ma}{\mathbb{M}}
\newcommand{\Pu}{\mathbb{P}}
\newcommand{\ra}{\rightarrow}
\newtheorem{thm}{Theorem}[section]
\newtheorem*{thm*}{Theorem}
\newtheorem{lem}[thm]{Lemma}
\newtheorem{prop}[thm]{Proposition}
\newtheorem{cor}[thm]{Corollary}
\theoremstyle{definition}
\newtheorem{defn}[thm]{Definition}
\newtheorem{prob}[thm]{Problem}
\newtheorem{exmp}[thm]{Example}
\newtheorem{rmk}[thm]{Remark}
\newtheorem{case}{Case}
\newtheorem{subcase}{Subcase}[case]
\numberwithin{equation}{section}
\title[Bangle functions are the generic basis for punctured surfaces]{Bangle functions are the generic basis for cluster algebras from punctured surfaces with boundary}
\subjclass[2010]{13F60 (05C70, 16G20)}
\keywords{}
\author{Christof Geiss}
\address{Christof Geiss\newline
Instituto de Matem\'aticas, UNAM, Mexico}
\email{christof.geiss@im.unam.mx}
\author{Daniel Labardini-Fragoso}
\address{Daniel Labardini-Fragoso\newline
Dipartimento di Matematica ``Tullio Levi-Civita'',
Universit\'a a degli Studi di Padova, Italy}
\email{daniel.labardinifragoso@unipd.it, labardini@math.unipd.it}
\author{Jon Wilson}
\address{Jon Wilson\newline
Jeremiah Horrocks Institute, University of Lancashire, UK}
\email{jwilson30@lancashire.ac.uk}
\date{September 18, 2025}
\begin{document}
\begin{abstract}
\noindent 
Let $\mathbf{\Sigma}=(\Sigma, \mathbb{M}, \mathbb{P})$
be a surface with marked points and non-empty boundary.
We prove that for any tagged triangulation $T$ of $\mathbf{\Sigma}$ in the sense of Fomin--Shapiro--Thurston, the coefficient-free bangle functions of Musiker--Schiffler--Williams coincide with the coefficient-free generic Caldero--Chapoton functions arising from the Jacobian algebra of the quiver with potential $(Q(T), W(T))$ associated to $T$ by Cerulli Irelli and the second named author.

When the set of boundary marked points $\mathbb{M}$ has at least two elements, Schröer and the first two authors have shown, relying heavily on results of Mills, Muller and Qin, that the generic coefficient-free Caldero-Chapoton functions form a basis of the coefficient-free (upper) cluster algebra
$\mathcal{A}(\mathbf{\Sigma})=\mathcal{U}(\mathbf{\Sigma})$. So, the set of bangle functions proposed by Musiker--Schiffler--Williams over ten years ago is indeed a basis. Previously, this
was only known in the unpunctured case. 

\end{abstract}

\maketitle

\tableofcontents

\section{Introduction}
\subsection{Bases for cluster algebras} Fomin and Zelevinsky introduced around 2001 cluster algebras
\cite{fomin2002cluster,fomin2003cluster,fomin2007cluster} 
as a device to study dual canonical bases in quantum groups and
the closely related theory of total positivity in complex reductive
groups.  They expected that in this context all cluster
monomials should belong to the dual  canonical basis. This question
was only recently settled by Kang--Kashiwara--Kim--Oh~\cite{kang2018monidal} for the case of generalized symmetric Cartan matrices and by Qin~\cite{qin2020dual}  for 
generalized symmetrizable Cartan matrices.  Previously, the
first named author together with Leclerc and Schröer~\cite{geiss2012generic} had identified
the dual of Lusztig's  semicanonical basis with the generic Caldero-Chapoton functions, which also contain all cluster monomials.  

Since the set of cluster monomials is always linearly 
independent~\cite{cerulli2013linear} this led more generally to
the search of bases for (upper) cluster algebras which contain the
cluster monomials, and which ideally expand positively 
with respect to each cluster.  Moreover,  such ``good'' bases are 
typically parametrized by the by tropical points of the corresponding cluster Poisson variety.  By the landmark work of 
Gross--Hacking--Keel--Kontsevich~\cite{gross2018canonical} on
theta functions at least one such basis exists under rather weak
hypothesis. See the remarkable paper~\cite{qin2019bases} for a thorough discussion. 

Parallel to these developments, cluster algebras have found applications in a surprising variety of subjects, see for example 
the surveys~\cite{fomin2010totalp},\cite{keller2012cluster},\cite{keller2020green},\cite{leclerc2010cluster}.

\subsection{Surface cluster algebras and bangle functions.} 
A prominent class of cluster algebras stems from surfaces with marked points, thanks to works of
Fock--Goncharov \cite{fock2007dual}, Fomin--Shapiro--Thurston \cite{fomin2008cluster}, Fomin--Thurston \cite{fomin2018cluster}, Gekhtman--Shapiro--Vainshtein \cite{gekhtman2005cluster} and Penner \cite{penner2012decorated}.

Let  $\bSig=(\Sigma, \Ma, \Pu)$ be a (bordered) surface with marked points in the sense of~\cite{fomin2008cluster}, where
$\Ma$ denotes the set of marked points on the boundary and $\Pu$ is the set of punctures. 
See Definition~\ref{def:surface} for more details. 
To associate a (coefficient-free) cluster algebra to $\bSig$   one considers tagged triangulations of $\bSigma$. 
Following~\cite[Def.~9.6]{fomin2008cluster}  each tagged 
triangulation $T$ has an associated skew-symmetric matrix $B(T)$, 
see also Definition~\ref{adjacency matrix} for details.
The matrix $B(T)$ changes under any flip of an arc in $T$ according to the Fomin--Zelevinsky mutation rule.  This allows us already to define a cluster algebra $\cA(\bSigma)=\cA(B(T),\mathbf{x})$ which is actually independent of the choice of
the triangulation $T$. In fact, a fundamental result of 
Fomin--Shapiro--Thurston~\cite[Cor.~6.2]{fomin2018cluster} and~\cite[Thm.~7.11]{fomin2008cluster} states that provided $\bSigma$ is not a closed surface with exactly one puncture, there is a bijection $\gamma\mapsto x_\gamma$ between the set of tagged arcs on $\bSigma$ and the set of cluster variables of
$\mathcal{A}(B(T_0),\mathbf{x})$, such that the induced assignment $T\mapsto(B(T),(x_\gamma))_{\gamma_\in T}$ is a bijection between the set of triangulations of 
$\bSigma$ and the set of (unlabelled) seeds of $\mathcal{A}(B(T_0),\mathbf{x})$, such that whenever two tagged triangulations are related by the flip of a tagged arc, 
the seeds corresponding to them are related by the mutation of seeds corresponding to the arc being flipped.

Given a surface $\bSigma=\surf$ and an ideal triangulation $T$ of $\bSigma$, Musiker--Schiffler--Williams associated to each plain arc (resp. simple closed curve) $\alpha$ on $\bSigma$, a bipartite graph $G(T,\alpha)$, which they called a \emph{snake graph} (resp. \emph{band graph}) \cite{musiker2011positivity,musiker2013bases}, with edges and \emph{tiles} naturally labelled by the arcs in $T$ (and the boundary segments of $\bSigma$). Later, Wilson extended this construction to all tagged arcs $\alpha$, with respect to any tagged triangulation $T$, introducing the like-minded \emph{loop graph} $G(T,\alpha)$ \cite{wilson2020surface}. Considering a certain subset $\mathcal{P}(G(T,\alpha))$ of the set of all perfect matchings of the loop or band graph $G(T,\alpha)$, called \emph{good} matchings, and using the natural labels mentioned above, they assigned a \emph{weight} monomial $w(P):=x(P)y(P)$ to each $P \in \mathcal{P}(G(T,\alpha))$, thus producing a generating function $\sum_{P}w(P)$ for the good matchings of $G(T,\alpha)$. Dividing this generating function by the monomial $\operatorname{cross}(T,\alpha)$ that records the number of crossings of $\alpha$ with each arc in $T$, they define the \emph{bangle function}
\begin{equation}\label{eq:general-MSW-expression-intro}
\MSW(G(T,\alp)):=\frac{\sum_{P}w(P)}{\operatorname{cross}(T,\alp)}.
\end{equation}
Furthermore, they define the bangle function associated to a lamination 
$L\in\Lamin(\bSig)$ (see~Section~\ref{ssec:KRS-Lamin} for details) to be
\[
\MSW(T, L):=\prod_{\alp\in L} \MSW(G(T,\alp))^{m_{\alpha}},
\]
where $m_\alpha$ is the multiplicity of the single laminate $\alpha$ as a member of~$L$. The set of bangle functions is
$$
\mathcal{B}^\circ(\bSigma, T):=\{\MSW(T,L)\suchthat \text{$L$ is a lamination of $\bSigma$}\}.
$$

Musiker--Schiffler--Williams show in \cite{musiker2013bases} that if the underlying surface $\surf$ does not have punctures and has at least two marked points (i.e., $\punct=\varnothing$ and $|\marked|\geq 2$), then for the coefficient-free cluster algebra $\cA(\bSigma)$ associated to $\surf$, and for any cluster algebra associated to $\surf$ under full-rank extended exchange matrices {\tiny $\left[\begin{array}{c}B(T)\\ B'\end{array}\right]$}, that (after appropriate specialisations in the sense of Fomin-Zelevinsky's separation of additions formula [Theorem 3.7, \cite{fomin2007cluster}]) both the set of bangle functions and the set of bracelet functions form bases of the associated cluster algebra, each containing the cluster monomials by \cite{musiker2011positivity}$^{\dagger}$. They conjecture in \cite{musiker2013bases} that $\mathcal{B}^{\circ}$ 

forms a basis also in the punctured case.

\thispagestyle{fockgonchorov}

\subsection{Generic Caldero-Chapoton functions.} \label{ssec:gen-CC}
Let $Q$ be a 2-acyclic quiver with vertex set 
$Q_0=\{1,2,\ldots, n\}$,
and  $B(Q)=(b_{ij})\in\ZZ^{Q_0\times Q_0}$ 
the corresponding skew-symmetric matrix with entries
\begin{equation} \label{eq:B(Q)}
b_{ij}=
\abs{\{\alp\in Q_1\mid \alp:j\rightarrow i \ \text{in} \ Q\}} -\abs{\{ \bet\in Q_0\mid \bet:i\rightarrow j \ \text{in} \ Q\}}.
\end{equation}
as in~\cite[Equation (1.4)]{derksen2010quivers}. In this context we abbreviate $\cA(Q):= \cA(B(Q))$ for the corresponding
cluster algebra with trivial coefficients.

Recall that for each basic $\CC$-algebra $A=\CC\ebrace{Q}/I$
and a representation $M$ of $A$ we have the \emph{$F$-polynomial}

\[
F_M(y_1,\ldots y_n) := 
\sum_{\be\leq\dimv(M)} \chi(\Gr^A_\be(M)) \by^{\be}
\in\ZZ[y_1,\ldots y_n]
\]

where $\by^{\be}:= \displaystyle \prod_{i=1}^n y_i^{e_i}$ and $\chi(\Gr^A_\be(M))$ is the topological Euler characteristic 
of the quiver Grassmannian of subrepresentations of $M$ with
dimension vector $\be$.

Building on this, one may define the \emph{Caldero--Chapoton} 
function
\[
CC_A(M):= \bx^{\bg(M)} F_M(\hat{y}_1,\ldots \hat{y}_n)
\in\ZZ[x^{\pm 1}_1,\ldots x^{\pm 1}_n]
\]
where $\hat{y}_j:= \displaystyle \prod_{i=1}^n x_i^{b_{ij}}$ for each $j \in Q_0$ and $\bg(M)\in\ZZ^{Q_0}$ is the injective g-vector of $M$.

Together with Leclerc and Schr\"{o}er, the first named author explored in \cite{geiss2012generic} the algebraic geometry of (affine) varieties of representations of Jacobi-finite non-degenerate quivers with potential $(Q,S)$ which appear in algebraic Lie theory.  They isolated a class of irreducible components of these representation spaces with good geometric-homological properties and called them \emph{strongly reduced} irreducible components. Later on, the adjective used was changed to \emph{generically $\tau^-$}-reduced, or simply, \emph{$\tau^-$-reduced}, 
cf.~\cite{geiss2022schemes}. Schröer and Bobiński made in a recent preprint~\cite{bobiński2025genericallytauregularirreduciblecomponents} a very good point that these
components should in fact be called \emph{$\tau^-$-regular components}. We decided to use this new terminology.
Roughly speaking, a component $Z\subseteq \operatorname{rep}(\mathcal{P}(Q,S),\mathbf{d})$ is $\tau^-$-regular if the codimension in $Z$ of a top dimensional $\operatorname{GL}_{\mathbf{d}}(\CC)$-orbit $\mathcal{O}\subseteq Z$ is equal to the minimum value taken on $Z$ by Derksen-Weyman-Zelevinsky's  (injective) $E$-invariant. See~\cite[Sec.~3 and 5]{cerulli2015caldero-chapoton} for more details.

It is easy to see that each $\tau^-$-regular irreducible component $Z$ has an open dense subset where the  constructible function
$M\mapsto CC_{\mathcal{P}(Q,S)}(M)$
takes a constant value, which is then denoted $CC_{\mathcal{P}(Q,S)}(Z)$. Resorting to decorated representations and spaces of decorated representations in order to be able to hit initial cluster variables through the Caldero-Chapoton function, the set of \emph{generic Caldero-Chapoton functions} is
\[
\mathcal{B}_{\mathcal{P}(Q,S)}:=\{CC_{\mathcal{P}(Q,S)}(Z)\suchthat \text{$Z$ is a decorated $\tau^-$-regular irreducible component}\}.
\]
It is in fact quite easy to see with the help of~\cite{derksen2010quivers} that for each Jacobi-finite non-degenerate quiver with potential $(Q,S)$ the set 
$\mathcal{B}_{\mathcal{P}(Q,S)}$ contains all cluster monomials of the cluster algebra $\cA(Q)$, and that all elements of $\mathcal{B}_{\mathcal{P}(Q,S)}$ belong to
the upper cluster algebra $\cU(Q)$ corresponding  to $Q$.
If the set $\mathcal{B}_{\mathcal{P}(Q,S)}$
is a basis of the (coefficient-free) upper cluster algebra then it is called the \emph{generic basis} of $\cU(Q)$.  

One of the main results of~\cite{geiss2012generic} is, 
that in the setting of unipotent cells for symmetric Kac-Moody groups, the generic Caldero--Chapoton
functions can be identified with the dual of Lusztig's semicanonical basis~\cite{lusztig2000semicanonical},
and that this basis contains in particular all cluster monomials.  
This led the authors of~\cite{geiss2012generic} to conjecture that the set $\mathcal{B}_{\mathcal{P}(Q,S)}$ might
be a basis of the upper cluster algebra in more situations beyond the algebraic Lie theoretic context.

The set $\mathcal{B}_{\mathcal{P}(Q,S)}$ does not change (up to cluster automorphism) under mutations as a subset of $\mathcal{U}(Q)$. This was shown by Plamondon when $(Q,S)$ is Jacobi-finite non-degenerate \cite{plamondon2013generic}, and by Schröer and the first two authors when $(Q,S)$ is arbitrary non-degenerate \cite{geiss2020generic}. 

\subsection{Statement of the Main Result}
Let $\bSig$ be a surface with marked points.  To each tagged 
triangulation $T$ of $\bSig$ we associate a quiver $Q'(T)$ such
that $B(Q'(T))=B(T)$.  This convention allows us to state and
prove our results without the roundabout with dual CC-functions
as for example in~\cite{geiss2022schemes}. 
Schröer and the first two named authors showed~\cite[Thm.~1.18]{geiss2020generic}, relying heavily on results of Mills \cite{mills2017maximal}, Muller \cite{muller2013locally,muller2014A=U} and Qin \cite{qin2019bases}, that when $(Q,S)=(Q'(T),S'(T))$ for some triangulation $T$ of a surface $\bSigma=\surf$ with at least two marked points on the boundary, the set of generic CC-functions
$\cB_{\cP(Q,S)}$ is indeed a basis for the (upper) cluster algebra $\cA(\bSigma)=\cU(\bSigma)$,
where $S'(T)$ is the potential defined by Cerulli Irelli and the second author in \cite{cerulli2012quivers}.
Let us write from now on $A(T)$ for the corresponding
complex Jacobian algebra $\cP_\CC(Q'(T), W'(T))$.
Note that due to our convention here, the algebra $A(T)$
is opposite to the one which was considered for example in~\cite{geiss2023laminations}.  We are now ready to state our main result, Theorem~\ref{thm:bangle-equals-generic-basis}, in a slightly
simplified version.

\begin{thm}\label{thm:bangle-equals-generic-basis-simp}
Let $\bSigma=\surf$ be a surface with marked points such that $|\marked|\geq 2$. For each tagged triangulation $T$ of $\bSigma$, the set $\mathcal{B}^\circ(\bSigma, T)$ of coefficient-free bangle functions is equal to the generic basis  
$\cB_{A(T)}$ of the coefficient-free (upper) cluster algebra $\mathcal{A}(\bSigma)=\mathcal{U}(\bSigma)$.
In particular, the set $\cB^\circ(\bSig, T)$ is a basis
of the cluster algebra.
\end{thm}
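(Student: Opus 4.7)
The plan is to establish the equality $\mathcal{B}^\circ(\bSig,T)=\mathcal{B}_{A(T)}$ by matching the two indexing sets and then comparing individual functions. Since the generic basis $\mathcal{B}_{A(T)}$ is already known (by \cite{geiss2020generic}) to be a basis of $\mathcal{A}(\bSig)=\mathcal{U}(\bSig)$ when $|\marked|\geq 2$, an equality of sets automatically upgrades to the basis statement. Concretely, I would construct a bijection $\Psi\colon\Lamin(\bSig)\to\DecIrr(A(T))$ from laminations to decorated $\tau^-$-regular irreducible components of the representation schemes of $A(T)$, and then verify that for every $L\in\Lamin(\bSig)$,
\[
\MSW(T,L)=CC_{A(T)}(\Psi(L)).
\]

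For the parameterization, one first handles single laminates $\alpha$. To a tagged arc $\alpha$ one associates a Labardini-type string module (decorated by the negative simple when $\alpha\in T$ is an initial arc), and to a simple closed curve $\gamma$ one associates the generic band module (or rather the closure of its GL-orbit), giving in both cases an indecomposable decorated $\tau^-$-regular component $Z(\alpha)$. For a general lamination $L=\bigsqcup m_\alpha\cdot\alpha$, one takes the generic direct sum component $Z(L):=\overline{\bigoplus m_\alpha\cdot Z(\alpha)}$; the compatibility of laminations (no intersections/crossings in the appropriate sense) should translate, via the dictionary of \cite{geiss2023laminations} and earlier work of Cerulli Irelli–Labardini-Fragoso, into vanishing of the $E$-invariant on generic pairs, so that $Z(L)$ is $\tau^-$-regular. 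The surjectivity of $\Psi$ should follow from the decomposition theorem for $\tau^-$-regular components (every such component decomposes generically into indecomposable $\tau^-$-regular summands, each of which comes from an arc or a band), while injectivity is a direct consequence of the disjointness of the associated $g$-vectors.

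The equality of functions reduces, via the obvious multiplicativity
\[
\MSW(T,L)=\prod_{\alpha\in L}\MSW(G(T,\alpha))^{m_\alpha}\qquad\text{and}\qquad CC_{A(T)}(Z(L))=\prod_{\alpha\in L}CC_{A(T)}(Z(\alpha))^{m_\alpha},
\]
to the case of a single laminate. For a single tagged arc or closed curve $\alpha$, I would match the two sides term-by-term by identifying perfect matchings of the (snake/band/loop) graph $G(T,\alpha)$ with subrepresentations of the generic module $M$ in $Z(\alpha)$ having prescribed dimension vector: each good matching $P$ should correspond to a quiver Grassmannian stratum whose Euler characteristic contributes the monomial $w(P)$, after one accounts for the denominator $\operatorname{cross}(T,\alpha)=\bx^{\bg(M)}\cdot(\text{normalization})$. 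In the unpunctured case this matching is essentially Musiker--Schiffler--Williams' snake-graph expansion rewritten in CC-form, so the heart of the work is to extend that identification to the punctured and tagged setting, using Wilson's loop graph \cite{wilson2020surface}.

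The main obstacle I expect is precisely this last step: proving $\MSW(G(T,\alpha))=CC_{A(T)}(M(\alpha))$ for a tagged arc that is incident to a puncture or whose endpoints carry notches. The loop graph $G(T,\alpha)$ has a more intricate combinatorial structure than a snake graph (its tiles near punctures overlap in controlled ways), while on the algebraic side the potential $W'(T)$ contributes relations coming from cycles around each puncture, so the corresponding string modules have non-free behaviour at the puncture vertices. Matching the extra matchings of the loop graph with the extra subrepresentations forced by these relations, in a way that keeps track of signs, $g$-vector corrections, and the cross denominator, is the delicate combinatorial--representation-theoretic bookkeeping that drives the proof. Once this local identification at punctures is settled and shown to be compatible with flips/mutations (so that it suffices to check it in one convenient triangulation of each local configuration), the general equality $\MSW(T,L)=CC_{A(T)}(\Psi(L))$ and hence the theorem will follow.
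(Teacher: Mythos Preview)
Your overall architecture (parametrize both sets by $\Lamin(\bSig)$, reduce to single laminates by multiplicativity, then compare one laminate at a time) matches the paper, and the bijection $\Psi$ you sketch is exactly the isomorphism $\pi_T$ of framed partial KRS-monoids established in \cite{geiss2023laminations}. However, the heart of your plan---a direct combinatorial bijection between good matchings of $G(T,\alpha)$ and subrepresentations of an explicit module $M(\alpha)$ for an \emph{arbitrary} tagged triangulation $T$---is where the approach breaks down, and it is precisely the step the paper avoids.

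The obstruction is that for punctured surfaces and general $T$ there is no known explicit description of the decorated representation $\mathcal{M}(T,\alpha)$ lying in the relevant $\tau^-$-regular component. The paper states this openly in Section~\ref{sec:remarks-and-problems}: the naive string or band representation of $Q(T)$ induced by $\alpha$ typically fails to be annihilated by the cyclic derivatives of $S(T)$, so the ``Labardini-type string module'' you invoke does not exist in general. Consequently there is no concrete module whose quiver Grassmannians you could try to match with perfect matchings, and your proposed local identification at punctures cannot even be formulated. The paper circumvents this entirely: rather than comparing matchings to subrepresentations, it proves a \emph{Combinatorial Key Lemma} (Theorem~\ref{thm: comb key lemma}) showing that the snake $F$-polynomial and snake $g$-vector obey, under flip of $T$, the very same recursion that Derksen--Weyman--Zelevinsky's Key Lemma imposes on the representation-theoretic $F$-polynomial and $g$-vector under mutation. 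One then checks the equality $\MSW=CC$ only at a single adapted triangulation $T_\alpha$ (Section~\ref{base case section}), where the module \emph{is} explicit because the restricted algebra is gentle; the recursion propagates the equality to every $T$. For tagged arcs the argument is still simpler: both $\MSW(G(T,\alpha))$ and $CC_{A(T)}(\pi_T(\alpha))$ are already known to equal the same cluster variable, so no matching is needed at all. In short, your flip/mutation remark at the end is not an auxiliary reduction to make a direct bijection tractable---it is the entire argument, and the direct bijection it was supposed to set up is neither available nor necessary.
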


\begin{rmk} 

(1) The proof of our theorem  relies heavily on \cite{geiss2023laminations} and our Combinatorial Key Lemma \ref{thm: comb key lemma}.  These arguments imply 
$\cB^\circ(\bSig, T)=\cB_{A(T)}$.  
See Section~\ref{ssec:struct} for more details. 
By the above mentioned result from ~\cite[Thm.~1.18]{geiss2020generic}, the set $\cB^\circ(\bSig, T)$ is indeed a basis of the upper cluster algebra.

(2) As we have mentioned already, Musiker--Schiffler--Williams showed in~\cite{musiker2013bases} that in the special case
$\Pu=\emptyset$ the bangle functions form a basis of $\cA(\bSig)$,
and they conjectured in Appendix A of~\emph{loc.~cit.}\ that the
restriction to $\Pu=\emptyset$ can be removed.

(3) Schröer and the first two named authors showed
in~\cite[Thm.~11.9]{geiss2022schemes} this theorem for the
special case $\Pu=\emptyset$. Their proof relies heavily on the
fact that for $\Pu=\emptyset$ the Jacobian algebra $A(T)$ is
gentle, which allows for quite direct calculations.  In our more
general situation the algebras $A(T)$ are still tame~\cite{geiss2016the},  but the description of their 
indecomposable representations is in general unknown.  
See for example~\cite{domínguez2017arc} for partial results into
this direction.  Our approach here exploits the use of mutations in the spirit of~\cite{derksen2010quivers}.

(4) Gross--Hacking--Keel--Kontsevich construct in their ground breaking paper~\cite{gross2018canonical} a \emph{theta basis}
for a large class of (upper) cluster algebras with skew-symmetrizable exchange matrix $B$.  This basis has remarkable
positivity properties. 

Recently, Mandel--Qin \cite{mandel2023bracelets} have shown that when $B=B(T)$ for some triangulation of a surface $\bSigma=\surf$, Gross--Hacking--Keel--Kontsevich's \emph{theta basis}
is equal to the set of bracelet functions $\mathcal{B}(\bSigma)$
from~\cite{musiker2013bases}, thus proving, through a combination with \cite{gross2018canonical}, that the set of bracelet functions is indeed a basis. The strategy of their proof is surprisingly similar to ours.
\end{rmk}

\subsection{Structure of the paper} \label{ssec:struct}
The paper is organized as follows. In Section \ref{sec:preliminaries} we provide the basics about mutations and cluster algebras from surfaces. In Section \ref{sec:laminations-as-components} we summarize the main constructions and results from \cite{geiss2023laminations} upon which we will rely in Section \ref{sec:bangle-functions-are-the-generic-basis}, the most relevant one being \cite[Proposition 2.8]{geiss2023laminations} (stated as Proposition \ref{prp:JaMut} below), which gives a precise recursive formula for the change that the generic projective $g$-vector of any $\tau$-regular component undergoes under mutation of $\tau$-regular components.

It should be stressed that, since elements of $\mathcal{B}^{\circ}$ were defined directly from the associated loop or band graphs, then, a priori, this viewpoint says nothing about whether $\mathcal{B}^{\circ}$ is canonical in a cluster-theoretical sense. That is, whether or not $\mathcal{B}^{\circ}$ is independent of the choice of triangulation, up to a cluster isomorphism. 

Throughout this paper, our approach works by flipping the perspective of \cite{musiker2011positivity} — we fix a curve on the surface and alter the underlying triangulations through the process of cluster mutation. Since we wish to work with principal coefficients, extra care is needed here. Indeed, we consequently require a framework that describes how cluster variables transform from one triangulation to the next, with respect to principal coefficients at those triangulations. This is achieved through the following lemma:

\begin{lem}[Combinatorial Key Lemma]
Let $T=\{\tau_1,\ldots,\tau_n\}$ be a tagged triangulation of $\bSig$,  $\tau_k$  a tagged arc belonging to $T$, and $\alpha$  a simple closed curve. We set $T'$ to be the tagged triangulation obtained from $T$ by flipping $\tau_k$. The following equality holds:

\begin{equation}
(y_k +1)^{h_k}F_{G(T,\alpha)}(y_1,\ldots, y_n) = (y_k' +1)^{h_k'}F_{G(T',\alpha)}(y_1',\ldots, y_n')
\end{equation}

where \begin{itemize}

\item $(\mathbf{y}',B') \in \mathbb{Q}(y_1,\ldots, y_n)$ is obtained from $(\mathbf{y},B(T))$ by mutation at $k$,

\item the \emph{snake $\mathbf{h}$-vector} $\mathbf{h}_{G(T,\alpha)}$ is defined by setting
$$u^{h_i} := {F_{G(T,\alpha)}}_{{\vert}_{\operatorname{Trop}(u)}}(u^{[-b_{i1}]_{+}}, \ldots, u^{-1}, \ldots, u^{[-b_{in}]_{+}}),$$ where $u^{-1}$ is in the $i^{th}$ component. The snake $\mathbf{h}$-vector $\mathbf{h}_{G(T',\alpha)}$ is defined analogously with respect to $T'$ and $\mu_{k}(B(T))=B(T')$.

\end{itemize}

\noindent Moreover, the snake $\mathbf{g}$-vector $\mathbf{g}_{G(T,\alpha)} = (g_1,\ldots, g_n)$ satisfies 
\begin{equation}
g_k = h_k - h_k'
\end{equation} 
and $\mathbf{g}_{G(T,\alpha)} = (g_1,\ldots, g_n)$ is related to $\mathbf{g}_{G(T',\alpha)} = (g_1',\ldots, g_n')$ by the following rule:
\begin{equation}
g_j'  := \left\{
\begin{array}{ll}
        -g_k, &\text{if $j =k$}; \\
        \\
        g_j + [b_{jk}]_{+}g_k - b_{jk}h_k, &\text{if $j \neq k$}.\\
\end{array} 
\right.
\end{equation}

\end{lem}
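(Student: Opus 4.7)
The plan is to establish the $F$-polynomial identity by a direct weight-preserving analysis of good matchings, and then derive the $g$-vector transformation either as a consequence of the $F$-polynomial identity (via the Fomin--Zelevinsky separation-of-additions formalism) or by tracking a distinguished minimal matching. The key geometric input is that $G(T,\alpha)$ and $G(T',\alpha)$ differ only locally, at the tiles created by crossings of $\alpha$ with the flipped arc $\tau_k$.

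First, I would analyze the local structure of the loop (or band) graph $G(T,\alpha)$ near every tile whose internal diagonal is labelled by $\tau_k$. Each intersection point of $\alpha$ with $\tau_k$ contributes one such tile, whose four boundary edges are labelled by the arcs of $T$ forming the quadrilateral around $\tau_k$. The flip of $\tau_k$ replaces the diagonal of this quadrilateral by $\tau_k'$, which corresponds to a local surgery on $G(T,\alpha)$: each $\tau_k$-tile is replaced by a $\tau_k'$-tile (with a complementary choice of boundary edges), and the way consecutive tiles in $G(T,\alpha)$ are glued is adjusted accordingly. This produces $G(T',\alpha)$. For the tagged and punctured setting, I would appeal to Wilson's loop-graph construction to handle the extra cases (notched taggings, self-folded triangles, arcs incident to a puncture), treating them by the same local surgery once the appropriate local model is identified.

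Next, I would build an explicit, weight-tracking correspondence between the good matchings of $G(T,\alpha)$ and of $G(T',\alpha)$, in the spirit of the three-term exchange identities and skein relations proved for snake graphs by Musiker--Schiffler--Williams and Canakci--Schiffler. The factor $(y_k+1)^{h_k}$ arises precisely because certain good matchings on one side correspond to multiple good matchings on the other (each tile where $\alpha$ crosses $\tau_k$ can be resolved in two ways after the flip, contributing a factor of $y_k+1$ or $y_k'+1$); the tropical characterization given in the statement identifies $h_k$ with the minimal $y_k$-exponent in $F_{G(T,\alpha)}$ after specializing $y_j \mapsto u^{[-b_{ij}]_+}$ for $j\neq i$, which counts exactly these ``resolvable'' tiles. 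Verifying that the combinatorial count matches the tropical definition — on both sides of the flip — is the technical heart of the argument.

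Finally, for the $g$-vector part, once the $F$-polynomial identity is established, the relation $g_k = h_k - h_k'$ follows directly by comparing the $x_k$-exponent of the Laurent monomial of $\MSW(G(T,\alpha))$ obtained by substituting the relation $x_k x_k' = \bx^{[\mathbf{b}_k]_+} + \bx^{[-\mathbf{b}_k]_+}$ into the expansion with respect to $T'$; the general formula $g_j' = g_j + [b_{jk}]_+ g_k - b_{jk} h_k$ then follows from the Fomin--Zelevinsky separation formula (see~\cite[Prop.~3.6 and Cor.~6.3]{fomin2007cluster}) applied to the identity in the first part. The main obstacle will be Step~2: carrying out the matching bijection cleanly when $\alpha$ crosses $\tau_k$ several times and the two graphs $G(T,\alpha)$, $G(T',\alpha)$ differ nontrivially as abstract graphs, particularly in the tagged/punctured cases where the local model around $\tau_k$ can involve notched endpoints or self-folded triangles, and where one must check that the definition of ``good matching'' interacts correctly with the local surgery.
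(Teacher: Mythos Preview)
Your proposal has the right geometric intuition — the flip changes the band graph only locally, near the tiles whose diagonal is labelled $\tau_k$ — and this is indeed the organizing principle of the paper's proof. However, the execution in the paper is considerably more concrete than a single ``weight-tracking bijection,'' and your plan for deducing the $g$-vector relations contains a potential circularity.

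For the $F$-polynomial identity, the paper does not construct a global bijection between good matchings. Instead it enumerates the finite list of \emph{local segments} of $\alpha$ near the flip region (eight combinatorial types, each with a pre- and post-flip snake subgraph), defines and computes the corresponding \emph{local $h$-vectors}, and proves that the global $h_k$ is the sum of the local ones by exhibiting, in each case, an explicit matching that realises the local $h_k$ and has negative diagonals on its boundary tiles so that the pieces glue via the minimal matching elsewhere. With the $h$-vector thus localized, the identity \eqref{combinatorial Fmutation} is verified case by case on each local piece. Self-folded triangles are handled by first decomposing the global matching sum according to the orientations induced on the diagonals coming from folded sides, which factorizes the sum into a product over the segments between consecutive such diagonals. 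This is a rather different mechanism from the skein/exchange-relation bijections you invoke.

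For the $g$-vector part, your plan to extract $g_k=h_k-h_k'$ and the mutation rule from the $F$-polynomial identity via separation of additions presupposes that $\MSW(G(T,\alpha))$ and $\MSW(G(T',\alpha))$ define the same Laurent polynomial — but that mutation-invariance is precisely a consequence of the lemma you are proving, so the argument as stated is circular. The paper avoids this by proving the $g$-vector mutation rule \emph{independently}: it enumerates six local $g$-vector configurations, shows the difference $\bg_{G(T,\alpha)}-\sum_\beta \bg_{G(T,\beta)}$ is flip-invariant, and then invokes the identification $\bg_{G(T,\alpha)}=\Sh_T(\alpha)$ (established from the base case of Section~\ref{base case section}) together with the known transformation of shear coordinates. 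The relation $h_k=\min(0,g_k)$ is then proved locally, and \eqref{comb kth gvector}, \eqref{comb gvector mutation} follow. Your alternative suggestion of tracking the minimal matching directly is closer to what is needed, but you would still have to supply the local enumeration that the paper carries out.
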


As an immediate corollary, we obtain that the set of bangle functions 
$\mathcal{B}^{\circ}(\bSigma)=\cB^\circ(\bSig, T)$ 
is, up to cluster automorphism, independent of the choice of $T$. Note that a version of this lemma first appeared in the work of Derksen-Weyman-Zelevinsky [\cite{derksen2010quivers}, Lemma 5.2] which was restricted the setting of cluster variables — our result shows this lemma holds on all elements of the basis $\mathcal{B}^{\circ}$.

In Section \ref{base case section} we show that for simple closed curve $\alpha$, there exists a tagged $T_\alpha$ for which one can explicitly compute a (decorated) representation $\mathcal{M}(T_\alpha,\alpha)$ such that 
\begin{itemize}
\item the projective $g$-vector
$\bg_{A(T_\alpha)}(\mathcal{M}(T_\alpha,\alpha))$ 
is equal to the vector of dual shear coordinates $\Sh_{T_\alpha}(\alpha)$ and to the snake $g$-vector
$\mathbf{g}_{G(T_\alpha,\alpha)}$ coming from the snake or band graph $G(T_\alpha,\alpha)$;
\item the representation-theoretic (dual) $F$-polynomial $F_{M(T_\alpha,\alpha)}$ is equal to the snake $F$-polynomial $F_{G(T_\alpha,\alpha)}$ coming from the snake or band graph $G(T_\alpha,\alpha)$;
\item the orbit closure of $\mathcal{M}(T_\alpha,\alpha)$ is a $\tau$-regular component $Z_{T_\alpha,\alpha}$ and any decorated representation 
mutation-equivalent to $\mathcal{M}(T_\alpha,\alpha)$ is generic, for the values of the projective $g$-vector, the representation-theoretic (dual) $F$-polynomial and Derksen-Weyman-Zelevinsky's $E$-invariant, in the corresponding $\tau$-regular component.
\end{itemize}

In Section \ref{CKL} we show that every time a flip of tagged triangulations is applied, the changes undergone by the snake $g$-vectors and snake $F$-polynomials of the band graphs constructed from $\alpha$, obey the same recursive formulas that govern the changes undergone by the projective $g$-vectors and representation-theoretic (dual) $F$-polynomials. In Section \ref{sec:bangle-functions-are-the-generic-basis} we combine the results from Sections \ref{base case section} and \ref{CKL} to deduce our main result. In Section \ref{sec:remarks-and-problems} we make some remarks about the extent to which the representations that yield the bangle functions have been really explicitly computed, and state a couple of open problems.

\section{Preliminaries}\label{sec:preliminaries}

\subsection{Cluster algebras}
This section provides a brief review of (skew-symmetric) cluster algebras of geometric type. Let $n \leq m$ be positive integers. Furthermore, let $\mathcal{F}$ be the field of rational functions in $m$ independent variables. Fix a collection $X_1,\ldots,X_n, x_{n+1},\ldots, x_m$ of algebraically independent variables in $\mathcal{F}$. We define the \emph{coefficient ring} to be $\mathbb{ZP}:=\mathbb{Z}[x^{\pm 1}_{n+1}\ldots x_m^{\pm 1}]$.

\begin{defn}
A \emph{(labelled) seed} consists of a pair, $(\mathbf{x},\mathbf{y}, B)$, where
\begin{itemize}
\item 
$\mathbf{x} = (x_1,\ldots, x_n)$ is a collection of variables in $\mathcal{F}$ which are algebraically independent over $\mathbb{ZP}$,
\item 
$\mathbf{y} = (y_1,\ldots y_n)$ where $y_k = \displaystyle \prod_{j=n+1}^{m} x_j^{b_{jk}}$ for some $b_{jk} \in \mathbb{Z}$,
\item 
$B = (b_{jk})_{j,k \in \{1,\ldots, n\}}$ is an $n \times n$ skew-symmetric integer matrix.
\end{itemize}
The variables in any seed are called \emph{cluster variables}. The variables $x_{n+1},\ldots,x_m$ are called \emph{frozen variables}. We refer to $\mathbf{y}$ as the \emph{choice of coefficients}.
\end{defn}

\begin{defn}
Let $(\mathbf{x},\mathbf{y},B)$ be a seed and let $i \in \{1,\ldots,n\}$. 

We define a new seed $\mu_{i}(\mathbf{x},\mathbf{y},B) := (\mathbf{x}',\mathbf{y}',B')$, called the \emph{mutation} of $(\mathbf{x},\mathbf{y},B)$ at $i$ where:
\begin{itemize}
\item 
$\mathbf{x}' = (x'_1,\ldots, x'_n)$ is defined by 
\[
x'_i = \frac{\displaystyle \prod_{b_{ki} >0} x_k^{b_{ki}} + \prod_{b_{ki} <0} x_k^{-b_{ki}}}{x_i}
\]
and setting $x_j' = x_j$ when $j \neq i$;
\item 
$\mathbf{y}'$ and $B' = (b'_{jk})$ are defined by the following rule:
\[   
b'_{jk} = 
     \begin{cases}
       -b_{jk},& \text{if $j=i$ or $k=i$,}\\
       b_{jk} + \max(0,-b_{ji})b_{ik} + \max(0,b_{ik})b_{ji},& \text{otherwise.}\\
     \end{cases}
\]
\end{itemize}
\end{defn}
A \emph{quiver} is a finite directed (multi) graph $Q= (Q_0,Q_1)$ where $Q_0$ is the set of vertices and $Q_1$ is the set of directed edges. It will often be convenient to encode (extended) skew-symmetric matrices as quivers. We describe this simple relationship in the definition below, which follows the conventions set out in the work of Derksen, Weyman and Zelevinsky~\cite{derksen2010quivers}.
\begin{defn}
Given a skew-symmetric $n\times n$ matrix $B$ we define a quiver $Q(B)= (Q_0, Q_1)$ by setting $Q_0 =\{1,\ldots, n\}$ and demanding that for any $i,j \in Q_0$ there are $[b_{ij}]_{+}$ arrows from $j$ to $i$ in $Q_1$.

We generalise the definition to extended $m\times n$ skew-symmetric matrices $\tilde{B}$ in the obvious way.
\end{defn}

\begin{defn}
\label{defn: cluster algebra}
Fix an  $(\mathbf{x},\mathbf{y}, B)$. If we label the {initial cluster variables} of $ \mathbf{x}$ from $1,\ldots, n $ then we may consider the labelled n-regular tree $\mathbb{T}_n $. Each vertex in $\mathbb{T}_n$ has $n$ incident edges labelled $1, \ldots, n$. Vertices of $\mathbb{T}_n$ represent seeds and the edges correspond to mutation. In particular, the label of the edge indicates which direction the seed is being mutated in. 

Let $\mathcal{X} $ be the set of all cluster variables appearing in the seeds of $ \mathbb{T}_n$. The \emph{cluster algebra} of the seed $(\mathbf{x},\mathbf{y}, B)$ is defined as $\mathcal{A}(\mathbf{x},\mathbf{y}, B ) := \mathbb{ZP}[\mathcal{X}]$. 

We say $\mathcal{A}(\mathbf{x},\mathbf{y}, B)$ is the \emph{cluster algebra with principal coefficients} if $m = 2n$ and $\mathbf{y} = (y_1, \ldots, y_n)$ satisfies $y_k = x_{n+k}$ for any $k \in \{1,\ldots, n\}$.
\end{defn}

\subsection{Cluster algebras from surfaces}
\label{Cluster algebras from surfaces}

In this subsection we recall the work of Fomin, Shapiro and Thurston \cite{fomin2008cluster}, which establishes a cluster structure for triangulated orientable surfaces. 

\begin{defn} \label{def:surface}
A \emph{surface with marked points} is a triple $\bSigma=(\Sigma,\marked,\punct)$ consisting of a compact, connected, oriented two-dimensional real differentiable manifold $\Sigma$ with (possibly empty) boundary $\partial\Sigma$, a finite set $\marked\subseteq\partial\Sigma$ containing at least one point from each boundary component of  $\Sigma$, and a finite set $\punct\subseteq\Sigma\setminus\partial\Sigma$. 
It is required that $\marked\cup\punct\neq\varnothing$. 
The elements of $\marked\cup\punct$ are called \emph{marked points} and the elements of $\punct$ are called \emph{punctures}. 
If $\punct=\varnothing$, it is said that $\bSigma$ is \emph{unpunctured}, whereas if $\punct\neq\varnothing$, one says that $\bSigma$ is punctured.
For technical reasons we exclude the cases where $\bSigma$ is an unpunctured or once-punctured monogon; a digon; a triangle; or a once, twice or thrice punctured sphere.  
In~\cite[Def.~2.1]{fomin2008cluster} the term \emph{bordered surfaces with marked points} was used instead of the more recent
terminology~\emph{surface with marked points} or even~\emph{marked surface}.  
\end{defn}

\begin{defn}
An \emph{arc} of $\bSig$ is a simple curve in $\Sig$ connecting two marked points of $\Ma\cup\Pu$, which is not homotopic  
to a boundary segment or a marked point.  We consider arcs up to homotopy (relative to its endpoints) and up to inversion of the orientation. 

A \emph{tagged arc} $\gamma$ is an arc whose endpoints have been `tagged' in one of two ways; \emph{plain} or \emph{notched}. Moreover, this tagging must satisfy the following conditions: if the endpoints of $\gamma$ share a common marked point, they must receive the same tagging; and an endpoint of $\gamma$ lying on the boundary $\partial S$ must always receive a plain tagging. 
In this paper we shall always consider tagged arcs up to the equivalence induced
from the equivalence relation for plain arcs. 
\end{defn}

\begin{defn}
Let $\alpha$ and $\beta$ be two tagged arcs of $\bSig$. We say $\alpha$ and $\beta$ are \emph{compatible} if and only if the following conditions are satisfied:

\begin{itemize}
\item There exist homotopic representatives of $\alpha $ and $\beta$ that don't intersect in the interior of $\Sig$. 

\item Suppose the untagged versions of $ \alpha$ and $\beta $ do not coincide. 
If $\alpha$ and $\beta$ share an endpoint $p$ then the ends of $\alpha$ and 
$\beta$ at $p$ must be tagged in the same way.

\item Suppose the untagged versions of $\alpha$ and $\beta$ do coincide. Then precisely one end of $\alpha$ must be tagged in the same way as the corresponding end of $\beta$.
\end{itemize}

A \emph{tagged triangulation} of $\bSig$ is a maximal collection of pairwise compatible tagged arcs of $\bSig$. Moreover, this collection is forbidden to contain any tagged arc that enclose a once-punctured monogon.

An \emph{ideal triangulation} of $\bSig$ is a maximal collection of pairwise compatible plain arcs. Note that ideal triangulations decompose $\bSigma$ into triangles, however, the sides of these triangles may not be distinct; two sides of the same triangle may be glued together, resulting in a \emph{self-folded triangle}.
\end{defn}

\begin{rmk}
To each tagged triangulation $T$ we may uniquely assign an ideal triangulation $T^{\circ}$ as follows:

\begin{itemize}
\item  
If $p$ is a puncture with more than one incident notch, then replace all these notches with plain taggings.
\item  
If $ p$ is a puncture with precisely one incident notch, and this notch belongs to $\beta \in T$, then replace $\beta$ with the unique arc $\gamma$ of $\bSig$ which encloses $\beta$ and $p$ in a monogon.
\end{itemize}

Conversely, to each ideal triangulation $T$ we may uniquely assign a tagged triangulation $ \iota(T)$ by reversing the second procedure described above.
\end{rmk}

\begin{defn}
\label{adjacency matrix}
Let $T$ be a tagged triangulation, and consider its associated ideal triangulation $ T^{\circ}$. We may label the arcs of $T^{\circ} $ from $ 1,\ldots, n$ (note this also induces a natural labelling of the arcs in $ T$). We define a function, $ \pi_{T} : \{1,\ldots,n\} \rightarrow \{1,\ldots,n\} $, on this labelling as follows:
\[   
\pi_{T}(i) = 
     \begin{cases}
       j & \text{if $ i $ is the folded side of a self-folded triangle in $ T^{\circ}$,}\\
       &\text{and $j$ is the remaining side;}\\
       i & \text{otherwise.}\\
     \end{cases}
\]

For each non-self-folded triangle $\Delta$ in $T^{\circ}$, as an intermediary step, define the matrix $ B_T^{\Delta} = (b^{\Delta}_{jk}) $ by setting 
\[   
b^{\Delta}_{jk} = 
     \begin{cases}
       1 & \text{if $\Delta$ has sides $ \pi_{T}(j)$ and $ \pi_{T}(k)$, and $ \pi_{T}(k) $ follows $ \pi_{T}(j) $}\\
       & \text{in the clockwise sense;}\\
       -1 & \text{if $ \Delta$ has sides $ \pi_{T}(j)$ and $\pi_{T}(k) $, and $ \pi_{T}(k) $ precedes $\pi_{T}(j)$}\\
       & \text{in the clockwise sense;}\\
       0 & \text{otherwise.}\\
     \end{cases}
\]
The \emph{adjacency matrix} $B(T) = (b_{ij})$ of $T$ is then defined to be the following summation, taken over all non-self-folded triangles $\Delta$ in $T^{\circ}$:
\[
B(T):=  \sum\limits_{\Delta} B_T^{\Delta}
\]
\end{defn}

\begin{defn}
Let $T$ be a tagged triangulation of a bordered surface $\bSig$. Consider the initial seed $ (\mathbf{x},\mathbf{y},B_T) $, where: $\mathbf{x} $ contains a cluster variable for each arc in $T$; $B(T)$ is the matrix defined in Definition \ref{adjacency matrix}; and $\mathbf{y}$ is any choice of coefficients. We call $\mathcal{A}(\mathbf{x}, \mathbf{y}, B(T))$ a \emph{surface cluster algebra}. 
\end{defn}

\begin{prop}[Theorem 7.9, \cite{fomin2008cluster}]

Let $T $ be a tagged triangulation of a bordered surface $\bSig$. Then for any $\gamma \in T$ there exists a unique tagged arc $\gamma'$ on $\bSig$ such that $f_{\gamma}(T) := (T\setminus \{\gamma\}) \cup \{\gamma'\}$ is a tagged triangulation. We call $f_{\gamma}(T)$ the \emph{flip} of $T$ with respect to $\gamma$.

\end{prop}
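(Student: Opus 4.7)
The plan is to reduce the tagged flip statement to the classical ideal case, using the bijection $T\leftrightarrow T^{\circ}$ described above, and then to handle the modifications forced by notchings. Given $\gamma\in T$, I would first cut $\bSig$ along every arc of $T\setminus\{\gamma\}$; the connected piece containing $\gamma$ is, after isotopy, a quadrilateral $Q_\gamma$ whose two triangular faces are the (possibly coinciding) triangles of $T^{\circ}$ adjacent to the image of $\gamma$. The natural candidate $\gamma'$ is the unique arc in $Q_\gamma$ isotopic to the diagonal complementary to $\gamma$, with tagging inherited from $T$ at every endpoint that is not forced to change by compatibility considerations.

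Existence would then be verified by a case analysis on the role of $\gamma$: first, if $\gamma$ is ordinary, i.e.\ neither the folded side of a self-folded triangle in $T^{\circ}$ nor incident with any puncture carrying a notch, then plain tagging at both ends of $\gamma'$ works and the resulting family is clearly a tagged triangulation; second, if $\gamma$ has a notched endpoint at a puncture $p$, one applies the tag-swapping involution at $p$ to reduce to the ordinary case, performs the flip there, and then undoes the swap; third, if $\gamma$ corresponds under $\iota$ to the folded side of a self-folded triangle, then $Q_\gamma$ degenerates to a once-punctured monogon and the flip exchanges a plain and a notched version of the same underlying arc. In each case I would verify that $\gamma'$ is a genuine tagged arc, that it is pairwise compatible with every remaining arc of $T$, and that $(T\setminus\{\gamma\})\cup\{\gamma'\}$ encloses no once-punctured monogon.

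Uniqueness then follows because any tagged arc $\gamma''$ such that $(T\setminus\{\gamma\})\cup\{\gamma''\}$ is a tagged triangulation must lie entirely inside $Q_\gamma$ and be compatible with the tags on its boundary; within $Q_\gamma$ only the two diagonals $\gamma$ and $\gamma'$ are available, and the tagging at each shared endpoint is pinned down by the remaining arcs of $T$, forcing $\gamma''=\gamma'$. The main obstacle will be the bookkeeping in degenerate configurations of $Q_\gamma$ — when two of its sides are identified along a boundary segment, when $Q_\gamma$ is bounded by a single arc (as in the self-folded case), or when a vertex of $Q_\gamma$ is a puncture at which the propagation of notches must be tracked. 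The excluded cases in Definition~\ref{def:surface} (the monogon, digon, triangle, and small spheres) are precisely what guarantees that $\gamma'\neq\gamma$ and that the resulting family is not merely pairwise compatible but a genuine tagged triangulation.
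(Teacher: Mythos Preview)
The paper does not supply a proof of this proposition: it is stated with attribution to \cite[Theorem~7.9]{fomin2008cluster} and invoked as a known result, so there is no argument in the present paper to compare your sketch against.

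For what it is worth, your outline is a reasonable direct approach. The genuinely delicate point is your third case, where $\gamma$ corresponds to the folded side of a self-folded triangle in $T^\circ$: this is precisely the situation in which the ordinary ideal flip is undefined, and your one-line description (``$Q_\gamma$ degenerates to a once-punctured monogon and the flip exchanges a plain and a notched version of the same underlying arc'') glosses over the mechanism that the tagged formalism was designed to supply. If you want a complete argument, consult the original reference; there the result is obtained more structurally, via properties of the tagged arc complex, rather than through a local quadrilateral picture.
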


\begin{thm}[Theorem 6.1, \cite{fomin2008cluster}]
\label{surface correspondence}
Let $\bSig$ be a  surface  with marked points. If $\bSig$ is not a once punctured closed surface, then in the cluster algebra $\mathcal{A}(\mathbf{x}, \mathbf{y}, B(T))$, the following correspondence holds:
\begin{align*}
 &\hspace{23mm} \mathcal{A}(\bx, \by, B(T)) & &  &\bSig \hspace{32mm}&   \\ 
 &\hspace{18mm} \emph{Cluster variables}  &\longleftrightarrow&  &\emph{Tagged arcs} \hspace{23mm} &  \\
 &\hspace{25mm}\emph{Clusters}   &\longleftrightarrow&  &\emph{Tagged triangulations} \hspace{14mm}&  \\
 &\hspace{24mm} \emph{Mutation}    &\longleftrightarrow&   &\emph{Flips of tagged arcs} \hspace{16mm}&  \\
\end{align*}

When $\bSig$ is a once-punctured closed surface then cluster variables are in bijection with all plain arcs or all notched arcs depending on whether $T$ consists solely of plain arcs or notched arcs, respectively.
\end{thm}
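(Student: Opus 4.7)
Fixing an initial tagged triangulation $T_0$ with its seed $(\mathbf{x}_{T_0},\mathbf{y},B(T_0))$, my plan is to construct a map $\Phi$ from tagged triangulations to seeds by transporting the initial seed along flip sequences: if $T$ is reached from $T_0$ by a sequence of flips $f_{\gamma_{i_1}},\ldots,f_{\gamma_{i_r}}$, set $\Phi(T):=\mu_{i_r}\cdots\mu_{i_1}(\mathbf{x}_{T_0},\mathbf{y},B(T_0))$. Once $\Phi$ is shown to be well-defined, the bijections \emph{tagged triangulations} $\leftrightarrow$ \emph{clusters} and \emph{flips} $\leftrightarrow$ \emph{mutations} follow immediately; the bijection \emph{tagged arcs} $\leftrightarrow$ \emph{cluster variables} is then obtained by completing any tagged arc $\gamma$ to some tagged triangulation and reading off the cluster variable sitting in the slot of $\gamma$ inside $\Phi(T)$.

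\textbf{Flip--mutation compatibility.} The key combinatorial input is the identity $B(f_\gamma(T))=\mu_k(B(T))$ where $k$ labels $\gamma$. Because $B(T)=\sum_\Delta B_T^\Delta$ is a sum of contributions from non-self-folded triangles, and flipping $\gamma$ alters only those triangles (or self-folded configurations) sharing $\gamma$, the verification reduces to a local check inside the (possibly self-folded) quadrilateral bordered by $\gamma$. I would carry this out case-by-case: the plain quadrilateral; $\gamma$ the folded side of a self-folded triangle; $\gamma$ the enclosing loop of a self-folded triangle; and the configurations involving notches that arise when the flip creates or destroys a punctured monogon. The role of the folding map $\pi_T$ of Definition \ref{adjacency matrix} is to identify the index of a folded arc with that of its enclosing loop, so that the self-folded and notched cases can be reduced to the plain one.

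\textbf{Well-definedness: the main obstacle.} The central difficulty, and in my view the principal obstacle, is showing that $\Phi$ is independent of the chosen flip sequence. Connectivity of the tagged flip graph (reducible via $T\mapsto T^\circ$ to the analogous connectivity for ideal triangulations) permits joining any two tagged triangulations by flips, but one must also match all \emph{relations} among flip sequences with relations among mutations. My plan is to reduce to two families of elementary relations: \emph{disjoint commutativity} (two flips supported in regions meeting only along their common boundary arcs commute, corresponding on the cluster side to the identity $\mu_i\mu_j=\mu_j\mu_i$ when $b_{ij}=0$), and \emph{pentagon relations} inside a pentagonal subsurface (corresponding to the pentagon identity in rank-$2$ cluster algebras of type $A_2$, checkable by explicit computation in the $A_2$ exchange graph). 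The subtle additional relations near a puncture with notched incident arcs, involving the local tag-swap symmetry, are matched against analogous mutation identities produced by the corresponding $4$-cycle of flips.

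\textbf{Bijection on arcs and the exceptional case.} Injectivity of $\gamma\mapsto x_\gamma$ is proved by recording the (appropriately corrected) intersection numbers of $\gamma$ with the arcs of $T_0$ as the denominator vector of $x_\gamma$ in the Laurent expansion with respect to $\mathbf{x}_{T_0}$; these intersection numbers determine $\gamma$ uniquely. Surjectivity follows from the surjectivity of $\Phi$ on clusters together with the fact that every cluster variable lies in some cluster. Finally, for a once-punctured closed surface there is no boundary segment to anchor a mixed tagging; the compatibility rules between tagged arcs sharing an endpoint then propagate globally, so every tagged triangulation is either entirely plain or entirely notched, and this tagging parity is preserved by flips. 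Consequently the tagged flip graph splits into two components interchanged by the universal tag-swap, and $\Phi$ reaches only the cluster variables associated to arcs of the same tagging parity as $T_0$, yielding the restricted correspondence stated in the theorem.
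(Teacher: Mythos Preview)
The paper does not prove this statement at all: it is stated as a background result and attributed to Fomin--Shapiro--Thurston \cite[Theorem~6.1]{fomin2008cluster}, with no proof given here. So there is no ``paper's own proof'' to compare your proposal against.

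That said, your outline has a genuine gap precisely at the step you yourself flag as the main obstacle. You propose to establish well-definedness of $\Phi$ by checking that disjoint commutativity and pentagon relations (plus some local tag-swap relations) suffice to present the tagged flip graph. But the fact that these relations \emph{generate all relations} among flips is equivalent to the simple connectedness of the (tagged) arc complex, which is a substantial topological theorem (Harer, Hatcher, et al.\ for ordinary arcs; additional work is needed in the tagged setting). You treat this as a routine reduction, whereas it is the actual content of the well-definedness argument; in the original FST paper the proof proceeds differently, by constructing the exchange graph isomorphism via the combinatorics of the tagged arc complex rather than by presenting the flip groupoid by generators and relations. Your injectivity argument also leans on the identification of denominator vectors with intersection numbers, which is itself a separate nontrivial theorem (proved later, and not available as an input in \cite{fomin2008cluster}).
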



\section{Laminations as \texorpdfstring{$\tau$}{τ}-regular components}
\label{sec:laminations-as-components}

\subsection{Tame partial KRS-monoids}

The simple-minded notion of \emph{partial Krull-Remak-Schmidt monoid} was introduced in \cite{geiss2023laminations} as a convenient abstract framework to state the naturalness of the bijection between laminations and $\tau$-regular components proved therein.

\begin{defn}\cite[Definition 2.1]{geiss2023laminations} A \emph{partial monoid} is a triple $(X, e, \oplus)$ consisting of a set
  $X$, a symmetric function $e: X\times X\ra\NN$ and a partially-defined sum
  $\oplus: \{(x,y)\in X\times X\mid e(x,y)=0\}\ra X$ such that:
  \begin{itemize}
  \item[(s)] if $e(x,y)=0$ we have $x\oplus y =y\oplus x$;
  \item[($0$)] there exists a unique element $0\in X$ with $e(0,x)=0$
    and  $0\oplus x=x$ for all $x\in X$;
  \item[(d)] if $e(y,z)=0$ we have $e(x, y\oplus z)=e(x,y)+e(x,z)$ for all
    $x\in Z$;
  \item[(a)] $(x\oplus y)\oplus z= z\oplus (y\oplus z)$ whenever
    one side of the equation is defined.
  \end{itemize}  
A \emph{morphism} of partial monoids from $X=(X, e,\oplus)$
to $X'=(X',e',\oplus')$ is a function $f: X\ra Y$, such that
$e'(f(x), f(y))=e(x,y)$ for all $x,y\in X$ and
$f(x\oplus y)= f(x)\oplus' f(y)$ whenever $e(x,y)=0$.
\end{defn}

\begin{rmk} (1) Suppose $(x_1\oplus x_2)\oplus x_3$ is defined. By $(d)$, we have $e(x_i, x_j)=0$ for $1\leq i<j\leq 3$, hence
  $x_1\oplus (x_2\oplus x_3)$ is defined.

  (2)  Suppose that we have $x_1, x_2, \ldots, x_n\in X$ with $e(x_i, x_j)=0$ for
  all $i<j$,  then $x_1\oplus x_2\oplus\cdots\oplus x_n\in X$ is well-defined,
  and  for each permutation $\sig\in\mathfrak{S}_n$ we have
  \[
    x_1\oplus x_2\oplus\cdots\oplus x_n=
    x_{\sig(1)}\oplus x_{\sig(2)}\oplus\cdots\oplus x_{\sig(n)}.
  \]
\end{rmk}  

\begin{defn}\cite[Definition 2.3]{geiss2023laminations} \label{def:KRS2}
Suppose $X=(X, e, \oplus)$ is a partial monoid.
\begin{itemize}
\item
The elements of the set
  \[
    X_\ind:=\{x\in X\setminus\{0\}
    \mid x= y\oplus z \text{ implies } y=0 \text{ or } z=0\}
  \]
are called \emph{indecomposable elements} of $X$.
\item
We say that $X$ is a \emph{partial KRS-monoid}
if every $x\in X$ is equal to a finite direct sum of indecomposable elements, and
whenever
\[
  x_1\oplus x_2\oplus\cdots\oplus x_m= y_1\oplus y_2\oplus\cdots\oplus y_n
\]
with $x_1, \ldots, x_m, y_1, \ldots, y_m\in X_\ind$, necessarily $m=n$ and, moreover, there exists
a permutation $\sig\in\mathfrak{S}_n$ such that $y_i=x_{\sig(i)}$ for all
$i=1,2,\ldots, n$.
\item
We say that $X$ is \emph{tame} if $e(x,x)=0$ for all $x\in X$.
\item A \emph{framing} for $X$ is
 a map $\bg: X\ra\ZZ^n$ (for some non-negative integer $n$) such that
$\bg(x\oplus y)=\bg(x)+\bg(y)$ for all
$x, y\in C$ with $e(x,y)=0$. A framing is \emph{faithful} if it is an injective function.
\item
A framed partial monoid $X=(X, e, \oplus,\bg)$ is \emph{free of rank} $n$ if the framing
$\bg: X\ra\ZZ^n$ is bijective.  
\end{itemize}  
\end{defn}

\begin{exmp}\cite[Example 2.4]{geiss2023laminations}\label{expl:KRS1}
Suppose $C$ is a set equipped and $e: C\times C\ra\NN$ is a function
such that $e(c,c)=0$ for all $c\in C$. Then
\[
  \KRS(C,e):=\{f: C\ra\NN\mid c_1, c_2\in\supp(f)\Rightarrow e(c_1, c_2)=0
  \text{ and } \abs{\supp(f)}<\infty \}
\]
is a tame partial KRS-monoid with
\[
e(f,g):= \sum_{c,d\in C} f(c)\cdot g(d)\cdot e(c,d),
\qquad
\text{and}
\qquad
  (f\oplus g)(c):= f(c)+g(c).
\]
Notice that
\[
  \KRS(C,e)_\ind=\{\del_c:C\ra\NN\mid c\in C\},\quad\text{where}\quad
  \del_c(d):=\del_{c,d} \quad\text{(Kronecker's \emph{delta})}.
\]
With this notation we have $f=\oplus_{c\in C} \del_c^{\oplus f(c)}$ for all
$f\in\KRS(C,e)$.
\end{exmp}

\begin{rmk}
For a tame partial KRS-monoid $X=(X, e, \oplus)$
we have $X\cong\KRS(X_\ind, e')$, where $e'$ is the restriction of
$e$ to $X_\ind\times X_\ind$. 
In this situation, any map $\bg: C\ra\ZZ^n$ can be extended to a framing
$\bg:\KRS(C, e)\ra\ZZ^n$ by simply setting
$\bg(f):=\sum_{c\in C} f(c)\bg(c)$. 
\end{rmk}

\subsection{KRS-monoids of \texorpdfstring{$\tau^-$}{τ-}-regular components} \label{ssec:FD-KRS}
Let $A$ be a basic finite dimensional algebra over an algebraically closed field $\field$. We may identify the Grothendieck group $K_0(A)$ of $A$ with $\ZZ^n$ for some non-negative integer $n$. 
Consider the set $\DecIrr^{\tau^-}(A)$ of decorated, generically $\tau^-$-regular,  irreducible components of the representation varieties of $A$. See Section~\ref{ssec:gen-CC} 
for the terminology, and~\cite[Sec.~3 and~5]{cerulli2015caldero-chapoton}  for more details.  The set $\DecIrr^{\tau^-}(A)$ becomes a partial KRS-monoid
under the function 
\[
e_A: \DecIrr^{\tau^-}(A)\times \DecIrr^{\tau^-}(A)\ra\NN
\]
that to each pair $(X,Y)\in \DecIrr^\tau(A)\times \DecIrr^\tau(A)$ associates the generic value $e_A(X,Y)$ that on the Zariski product $X\times Y$ takes the symmetrized (injective)
$E$-invariant
\begin{multline}\label{eq:def-of-sym-proj-E-inv}
E_A((M,\bv),(N,\bw)):=\\
\dim\Hom_A(\tau^{-1}M, N)+\dim\Hom_A(\tau^{-1}N, M)+ 
\dimv(M)\cdot\bw + \bv\cdot\dimv(N);
\end{multline}
and the partially-defined sum
\[
\oplus: \{(X,Y)\in \DecIrr^{\tau^-}(A)\times \DecIrr^{\tau^-}(A)\mid e(x,y)=0\}\ra \DecIrr^{\tau^-}(A)
\]
given by the direct sum of $e_A$-orthogonal irreducible components, $ X\oplus Y:=\overline{X\oplus Y}$.

The function $\bg_A:\DecIrr^{\tau^-}(A)\ra\ZZ^n$ that to each $X\in \DecIrr^\tau(A)$ associates the generic value taken on $X$ by the (injective)
$g$-vector 
\[
\bg_A(M,\bv):= (\dim\Hom(\tau^{-1} M, S_i) - \dim(S_i, M)+ v_i)_{i\in Q_0}
\]
is a framing for the partial KRS-monoid $(\DecIrr^{\tau^-}(A),e_A,\oplus)$.
More precisely:  

\begin{thm} \label{thm:tau-red-comps-form-a-KRS-monoid}
  Let $A$ a finite-dimensional basic algebra over an algebraically closed field. Let $n$ be the rank of the Grothendieck group of $A$.
\begin{itemize}
\item[(a)]     
  $\DecIrr^{\tau^-}(A)=(\DecIrr^{\tau^-}(A), e_A, \oplus, \bg_A)$ is a framed, free KRS-monoid of rank $n$.  The subset $\DecIrr^\tau_\ind(A)$ of components,
  which contain a dense set of indecomposable representations is precisely
  the set of indecomposable elements in the sense of Definition~\ref{def:KRS2}.
\item[(b)]
  With the framing from the generic (injective) $g$-vector, $\DecIrr^{\tau^-}(A)$ is  a free partial KRS-monoid of rank $n$.  
\item[(c)] 
  If $A$ is of tame representation type, then $\DecIrr^{\tau^-}(A)$ is tame in the sense of Definition~\ref{def:KRS2}. Consequently, there is an isomorphism of partial KRS-monoids
\[
  \DecIrr^{\tau^-}(A)\cong \KRS(\DecIrr^{\tau^-}_\ind(A), e_A).  
\]
Moreover in this case each $Z\in\DecIrr^\tau_\ind(A)$ contains either a
dense orbit, or a one-parameter family of bricks.  
\end{itemize}
\end{thm}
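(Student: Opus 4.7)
The plan is to verify the partial-monoid axioms directly from the definitions of $e_A$ and $\oplus$, then upgrade to a framed, free KRS-monoid by using the generic injective $g$-vector as a faithful framing, and finally handle the tame case via the known dichotomy (orbit closure versus one-parameter family of bricks) for indecomposable $\tau^-$-regular components of tame algebras.

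For part (a), I first check the four partial-monoid axioms for $(\DecIrr^{\tau^-}(A), e_A, \oplus)$. Symmetry of $e_A$ is immediate from \eqref{eq:def-of-sym-proj-E-inv}. The zero element is the trivial decorated component, which is $e_A$-orthogonal to every component. Distributivity $e_A(X, Y\oplus Z) = e_A(X, Y) + e_A(X, Z)$ when $e_A(Y, Z) = 0$ reduces to additivity of each bilinear summand of $E_A$ on direct sums, once one knows that $\overline{Y \oplus Z}$ is again a $\tau^-$-regular component under the orthogonality hypothesis; this closure-under-direct-sum property is part of the foundations laid down in \cite{cerulli2015caldero-chapoton,plamondon2013generic}. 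Associativity follows because $e_A$-orthogonality of $X \oplus Y$ with $W$ forces $e_A$-orthogonality of each summand with $W$, which is itself an immediate consequence of distributivity.

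To promote the partial monoid to a framed, free KRS-monoid (thus establishing (a) and (b)), I introduce the generic injective $g$-vector $\bg_A$ as a framing. Additivity $\bg_A(X \oplus Y) = \bg_A(X) + \bg_A(Y)$ on $e_A$-orthogonal pairs follows from additivity of the relevant $\Hom$-dimensions on direct sums of representations. The heart of the matter is then the bijectivity of $\bg_A\colon \DecIrr^{\tau^-}(A) \to \ZZ^n$: surjectivity comes from realizing every lattice vector as a generic injective $g$-vector, which rests on mutation-invariance of the class of $\tau^-$-regular components together with a base case at the (negative) simples and shifted projectives; injectivity, i.e.\ the fact that a decorated $\tau^-$-regular component is determined by its generic $g$-vector, is the Plamondon-type parametrization result established in \cite{plamondon2013generic} for Jacobi-finite non-degenerate $(Q,S)$ and extended to arbitrary non-degenerate $(Q,S)$ in \cite{geiss2020generic}. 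Bijectivity of $\bg_A$ immediately forces Krull--Schmidt uniqueness of decomposition into indecomposables, since any two decompositions of $Z$ yield two $\bg_A$-summations equal to $\bg_A(Z)$ and the framing is injective; this in turn identifies $\DecIrr^{\tau^-}_\ind(A)$ with the indecomposable elements in the sense of Definition \ref{def:KRS2}.

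For part (c), given tameness of $A$, I reduce by the already-established additivity to showing $e_A(Z, Z) = 0$ for $Z \in \DecIrr^{\tau^-}_\ind(A)$. The crucial representation-theoretic input is the dichotomy that every such $Z$ is either (i) the closure of a single $\operatorname{GL}_\bd$-orbit, necessarily of a $\tau$-rigid representation, so that $e_A(Z,Z)=0$ by the codimension-of-generic-orbit formula that characterizes $\tau^-$-regularity, or (ii) the closure of a one-parameter family of pairwise non-isomorphic bricks, in which case $e_A(Z,Z) = 0$ by a direct generic computation using the brick property $\End(M) = \field$ together with the dimension count for the family. This dichotomy generalizes the classical regular-tubes versus preprojective/preinjective picture for tame hereditary algebras and is available in the required generality for the Jacobian algebras at hand by \cite{geiss2016the}. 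Once tameness is in hand, the final isomorphism with $\KRS(\DecIrr^{\tau^-}_\ind(A), e_A)$ is the formal consequence recorded in the remark just after Example~\ref{expl:KRS1}. The main obstacle I anticipate is the injectivity of $\bg_A$, since the parametrization result invoked there is substantially deeper than the formal KRS-monoid manipulations; the second obstacle is the tame-algebra dichotomy in (c), which requires genuine representation-theoretic input rather than formal arguments.
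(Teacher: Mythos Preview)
The paper does not actually prove this theorem: it is stated as a summary of results from the literature, and Remark~\ref{rem:about-KRS-monoids-of-tau-red-comps} simply records that part~(a) follows from \cite[Theorem~1.2]{crawley2002irreducible} combined with \cite[Theorems~1.3 and~1.5]{cerulli2015caldero-chapoton}, part~(b) is Plamondon's theorem \cite{plamondon2013generic}, and part~(c) is \cite[Corollary~1.7]{geiss2023semicontinuous} together with \cite[Theorem~3.2]{geiss2022schemes}. So there is no in-paper argument to compare against; what matters is whether your sketch correctly identifies the substantive inputs.

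Your proposal has two genuine gaps. First, your deduction of Krull--Remak--Schmidt uniqueness from injectivity of $\bg_A$ does not work: if $Z = X_1\oplus\cdots\oplus X_m = Y_1\oplus\cdots\oplus Y_r$ are two decompositions, injectivity of $\bg_A$ only tells you that $\sum_i \bg_A(X_i) = \sum_j \bg_A(Y_j)$ as a single vector in $\ZZ^n$, which says nothing about matching up the summands. The KRS property for irreducible components is an independent and non-formal input, namely the decomposition theorem of Crawley-Boevey and Schr\"oer \cite{crawley2002irreducible}; you have the logical dependence reversed. Second, the theorem is stated for an \emph{arbitrary} finite-dimensional basic algebra $A$, but your argument for surjectivity of $\bg_A$ invokes mutation-invariance and a base case at negative simples, which presupposes a quiver-with-potential structure that need not exist. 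The bijectivity of the generic $g$-vector parametrization in this generality is exactly the content of Plamondon's result, and it is not something you can bootstrap from mutation arguments alone.

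For part~(c) the shape of your argument is right, but the citation is off: \cite{geiss2016the} establishes tameness of the Jacobian algebras arising from surfaces, not the brick/dense-orbit dichotomy for $\tau^-$-regular indecomposable components over an arbitrary tame algebra. The latter is what is needed here, and it is supplied by \cite{geiss2023semicontinuous} and \cite{geiss2022schemes}.
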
  

\begin{rmk}\label{rem:about-KRS-monoids-of-tau-red-comps}
\begin{enumerate}
\item 
In the form it is stated, Theorem~\ref{thm:tau-red-comps-form-a-KRS-monoid} appeared (up to duality) in \cite{geiss2023laminations}. Its assertions are restatements of previous results by various authors:
Part~(a) is a well-known combination of~\cite[Theorem~1.2]{crawley2002irreducible}
and~\cite[Theorems~1.3 and 1.5]{cerulli2015caldero-chapoton}.
Part~(b) is a theorem of Plamondon~\cite{plamondon2013generic}.  
Part~(c) is~\cite[Corollary 1.7]{geiss2023semicontinuous} and \cite[Theorem~3.2]{geiss2022schemes}.
\item 
More precisely, Theorem~\ref{thm:tau-red-comps-form-a-KRS-monoid} is just
the dual version of~\cite[Thm.~2.5]{geiss2023laminations}.
There, the result was stated in terms of generically $\tau$-regular
components with the function $e_A^\prj$ which is defined in terms
of the projective symmetrized E-invariant 
\begin{multline*}
\qquad\quad E_A^\prj((M,\bv), (N,\bw)):=\\
\dim\Hom_A(M, \tau N) + \dim\Hom_A(N, \tau M) 
 +\bv\cdot\dimv(N)+ \dimv(M)\cdot\bw,
\end{multline*}
with framing defined by the generic values of the projective $g$-vector
\[
\bg_A^\prj(M,\bv):= (\dim\Hom_A(S_i, \tau M, S_i)-\dim\Hom_A(M, S_i) + v_i)_{i\in Q_0}
\]
Duality over the ground field $\field$ induces a natural isomorphism of framed partial KRS-monoids 
\[
(\DecIrr^{\tau^-}(A), e_{A}, \oplus, \bg_{A})\cong (\DecIrr^\tau(A^\op), e^\prj_{A^\op}, \oplus, \bg^\prj).
\]
\end{enumerate}
\end{rmk}

Suppose now that the finite-dimensional $\field$-algebra $A$ is the Jacobian algebra of a non-degenerate quiver with potential $(Q,W)$. 
Following~\cite[Section~10]{derksen2008quivers}, for each representation $M$ of 
$A=\cP_{\field}(Q, W)$ and each vertex $k\in Q_0$ we consider the triangle of linear maps
\[\xymatrix{
& M(k) \ar[rd]^{M(\beta_k)}\\
M_{\operatorname{in}}(k)\ar[ru]^{M(\alpha_k)}&&\ar[ll]^{M(\gamma_k)} M_{\operatorname{out}}(k)
}.
\]
In view of~\cite[Proposition~10.4 and Remark~10.8]{derksen2010quivers}, we have 

\begin{align} \label{eq:gvec}
\bg_A(M)= (\dim\Ker(M(\gamma_k))-\dim M(k))_{k\in Q_0}
=:\bg_{A}^{\mathrm{DWZ}}(M,\mathbf{0}).
\end{align}
Here $\bg_{A}^{\mathrm{DWZ}}(M)$ denotes the $g$-vector of
a QP-representation in the sense of~\cite[(1.13)]{derksen2010quivers}.  This can be easily extended to
decorated QP-representations.

Now, write $A':=\cP_{\field}(\mu_k(Q,W))$, where $\mu_k(Q,W)$ is the QP-mutation defined in \cite{derksen2008quivers}, and  
consider the piecewise linear transformation of integer vectors 
\begin{equation} \label{eq:gproj-transf}
\gamma_k^B: \ZZ^{Q_0}\ra \ZZ^{Q_0}\quad\text{with}\quad 
\gamma_k^B(\bg)_i = \begin{cases}
        -g_i &\text{if } i=k,\\
         g_i + \sgn(g_k)[b_{ik}\cdot g_k]_+ &\text{else},
    \end{cases}
\end{equation}
where $B(Q)=(b_{ij})$ is the matrix defined in~\eqref{eq:B(Q)}.
Note that, this is just another way of writing the transformation
rule for g-vectors from~\cite[Conj.~7.12]{fomin2007cluster},
which was proved in~\cite[Thm.~1.7]{derksen2010quivers}.
Compare also with Equation~\eqref{comb gvector mutation}.

\begin{prop}\cite[Proposition 2.8]{geiss2023laminations} \label{prp:JaMut}
    For each $Z\in\DecIrr^{\tau^-}(A)$  there exists a dense open subset $U_Z\subset Z$,
    a unique irreducible component $\tilde{\mu}_k(Z)\in\DecIrr^{\tau^-}(A')$ and a regular
    map $\nu_Z: U_Z\ra\tilde{\mu}_k(Z)$ with the following properties:
    \begin{itemize}
    \item[(a)]
        For each $X\in U_Z$ we have $\nu_Z(X)\cong\mu_k(X)$, where $\mu_k$ denotes the
        mutation of the decorated QP-representation $X$ in direction $k$, as defined 
        in~\cite{derksen2008quivers}.
    \item[(b)]  The morphism of affine varieties
    $G_{\bd(\tilde{\mu}_k(Z))}\times U_Z \ra \tilde{\mu}_k(Z), (g, X)\mapsto g.\nu_Z(X)$
    is dominant.
    \item[(c)]
    For each $Z\in\DecIrr^\tau(A)$ we have 
    \[
    \bg_{A'}(\tilde{\mu}_k(Z))=\gam_k^B(\bg_A(Z)).
    \]
   \item[(d)] $\tilde{\mu}_k(\tilde{\mu_k}(Z))=Z$ for all 
    $Z\in\DecIrr^{\tau^-}(A)$.
    \item[(e)] The map 
    \[
    \tilde{\mu}_k:\DecIrr^{\tau^-}(A)\ra\DecIrr^{\tau^-}(A')
    \]
    is an isomorphism of partial KRS-monoids.  In particular, $Z\in\DecIrr^{\tau^-}(A)$ is 
    indecomposable if and only if $\tilde{\mu}_k(Z)\in\DecIrr^{\tau^-}(A')$ is indecomposable.
    \end{itemize}    
\end{prop}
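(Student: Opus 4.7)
My plan is to lift the Derksen--Weyman--Zelevinsky (DWZ) pointwise mutation $\mu_k$ of decorated QP-representations \cite{derksen2008quivers} to a map on $\tau^-$-regular irreducible components. Recall that for a decorated representation $(M,\bv)$ of $A=\cP(Q,W)$, the DWZ construction $\mu_k(M,\bv)$ produces a decorated representation of $A'=\cP(\mu_k(Q,W))$ by replacing the vector space at vertex $k$ using the kernel, image and cokernel of the triangle maps $M(\alp_k),M(\bet_k),M(\gam_k)$; once the ranks of these three maps are fixed, $\mu_k$ is algebraic in the matrix entries, and it is an involution up to right-equivalence of QPs by \cite[Thm.~5.7]{derksen2008quivers}.

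\medskip

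\noindent Given $Z\in\DecIrr^{\tau^-}(A)$, the dimensions entering DWZ's formulas are constructible functions of $M$, hence take constant values on a dense open $U_Z^0\subset Z$; after intersecting with the locus where the required splittings can be chosen algebraically, we get an open $U_Z\subset U_Z^0$ on which $\mu_k$ is a regular morphism $\nu_Z:U_Z\ra\operatorname{rep}(A',\bd')$ for the mutated dimension vector $\bd'$. Since $U_Z$ is irreducible, the orbit closure $\overline{\operatorname{GL}_{\bd'}\cdot\nu_Z(U_Z)}$ lies in a single irreducible component of $\operatorname{rep}(A',\bd')$, which I take as the definition of $\tilde{\mu}_k(Z)$. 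A dimension count, using the pointwise invariance of the symmetrized $E$-invariant under $\mu_k$, shows this orbit closure fills $\tilde{\mu}_k(Z)$, proving (a) and (b); the same invariance transports the codimension equality defining $\tau^-$-regularity from $Z$ to $\tilde{\mu}_k(Z)$. Part (c) then follows from \cite[Thm.~1.7]{derksen2010quivers} applied to a generic $M\in U_Z$, together with the identification $\bg_A=\bg_A^{\mathrm{DWZ}}$ of~\eqref{eq:gvec}.

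\medskip

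\noindent For (d), the pointwise involution property of $\mu_k$ forces $\nu_{\tilde{\mu}_k(Z)}\circ\nu_Z$ to send a generic $M\in U_Z$ into its own $\operatorname{GL}_\bd$-orbit, so $Z$ is the only possible value of $\tilde{\mu}_k(\tilde{\mu}_k(Z))$. For (e): $\bg$-compatibility via $\gam_k^B$ is part (c); commutation of $\tilde{\mu}_k$ with $\oplus$ follows because $\mu_k$ commutes with direct sums of decorated QP-representations; preservation of $e_A$ is the generic incarnation of DWZ's pointwise invariance of the symmetrized $E$-invariant; and the correspondence between indecomposable components of the two KRS-monoids follows from the correspondence of generic indecomposable representations under $\mu_k$.

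\medskip

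\noindent I expect the main obstacle to be the combination of dominance in (b) and the $\tau^-$-regularity of the image component, neither of which follows directly from pointwise DWZ mutation: a priori $\operatorname{GL}_{\bd'}\cdot\nu_Z(U_Z)$ could sit strictly inside a component, or land in a non-$\tau^-$-regular one. My approach would be to construct symmetrically a candidate inverse $\nu':U_{\tilde{\mu}_k(Z)}\ra\operatorname{rep}(A,\bd)$ via DWZ-mutation on the $A'$ side, and to exploit the involution $\mu_k\circ\mu_k\simeq\mathrm{id}$ to argue that $\nu_Z$ and $\nu'$ are mutually dominant between $Z$ and $\tilde{\mu}_k(Z)$; this pins down $\tilde{\mu}_k(Z)$ as the genuine image component and simultaneously certifies its $\tau^-$-regularity through the inherited codimension equality.
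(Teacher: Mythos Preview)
The paper does not contain a proof of this proposition: it is quoted verbatim as \cite[Proposition~2.8]{geiss2023laminations} and used as a black box, with only a brief remark afterwards explaining that it is the dual of the version stated in that reference. There is therefore nothing in the present paper to compare your proposal against.

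That said, your sketch follows the expected line of argument for results of this type (making DWZ mutation algebraic on a rank-stratified open locus, transporting the $E$-invariant to control codimension, and using the involution to force dominance), and you have correctly identified the genuinely delicate point: showing that $\operatorname{GL}_{\bd'}\cdot\nu_Z(U_Z)$ is dense in an irreducible component and that this component is $\tau^-$-regular. Your proposed fix via the symmetric construction on the $A'$ side is the right idea, but as written it is circular: to build $\nu'$ on $U_{\tilde{\mu}_k(Z)}$ you need $\tilde{\mu}_k(Z)$ to already be a component containing a suitable open set, which is precisely what you are trying to establish. The actual argument (in \cite{geiss2023laminations}, building on Plamondon's work) requires a more careful dimension count comparing $\dim Z$, $\dim\tilde{\mu}_k(Z)$, and the generic $E$-invariants on both sides, together with upper semicontinuity, to close the loop without assuming dominance in advance.
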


\begin{rmk} \label{rk:Mut-Comp}
(1) Similar to Theorem~\ref{thm:tau-red-comps-form-a-KRS-monoid},
the above Proposition \ref{prp:JaMut} is the dual version of the original one  in~\cite{geiss2023laminations}, see also the discussion in Remark~\ref{rem:about-KRS-monoids-of-tau-red-comps}.

(2) In view of the definition of the piecewise linear map
$\gam_k^B$ in Equation~\eqref{eq:gproj-transf} and part~(c) of Proposition \ref{prp:JaMut} above we have the following interesting equality
\[
    \left[\begin{array}{c}-\mu_k(B(Q))\\
    \phantom{-}\bg_A'(\tilde{\mu}_k(Z))\end{array}\right]
    =\mu_k\left(
    \left[\begin{array}{c}-B(Q)\\
    \phantom{-}\bg_A(Z)\end{array}\right]\right)
\]
of $(n+1)\times n$ matrices, where $\mu_k$ is Fomin-Zelevinsky's matrix mutation.
Thus, Proposition~\ref{prp:JaMut} is a generalization of Nakanishi-Zelevinsky's result \cite[Equation (4.1)]{nakanishi2012on}. 
\end{rmk}

\subsection{KRS-monoids of laminations} \label{ssec:KRS-Lamin}
Let $\bSigma=\surf$ be a possibly-punctured surface with marked points.
In \cite[Section~4]{geiss2023laminations}, we considered
the set $\cLC^*(\bSigma)$ of homotopy classes of marked curves and loops on $\bSigma$
that have no kinks. Inspired by \cite{qiu2017cluster}, we  introduce a symmetric marked
intersection number
\[
\Intn^*_\bSigma: \cLC^*(\bSigma)\times\cLC^*(\bSigma)\ra\mathbb{Z}_{\geq 0}.
\] 
Given a marked curve (or  loop)
$(\gam, c)\in\cLC^*(\bSigma)$, let $(\gam,c)^{-1}$ denote the inversely oriented
curve $\gam$, with accordingly swapped decoration $c$.  The involution
$(\gam, c)\mapsto (\gam, c)^{-1}$ induces an equivalence relation $\simeq$ on $\cLC^*(\bSigma)$. For us, as in~\cite{qiu2017cluster}, a \emph{simple marked curve} is a marked curve
$(\gam, c)\in\cLC^*(\bSigma)$ which has self-intersection number $0$. We let $\cLC^*_{\tau}(\bSigma)$ denote the set of all simple marked curves, and define 
the tame partial KRS-monoid of laminations
\[
\Lamin(\bSigma):= \KRS(\cLC^*_{\tau}(\bSigma)/_{\simeq}, \Intn^*_\bSigma).
\]
See Example \ref{expl:KRS1} and \cite[Section~4.3]{geiss2023laminations} for more details.  The set $\cLC^*_{\tau}(\bSigma)/_{\simeq}$ can be identified
with the set of laminations $\mathcal{C}^\circ(\bSigma)$ considered
in~\cite{musiker2013bases} (two marked curves having intersection number equal to $0$ corresponds to them being \emph{$\mathcal{C}^\circ$-compatible} in Musiker-Schiffler-Williams' nomenclature). This is also compatible with the treatment of laminations in~\cite{fomin2018cluster}.

The tame partial KRS-monoid $\Lamin(\bSigma)$ can be equipped with a framing coming from (dual) shear coordinates. We shall describe this framing only for the indecomposable elements of $\Lamin(\bSigma)$, which we call \emph{laminates}. For the reader's convenience, we fully recall from \cite{geiss2023laminations} the definition of the vector of dual shear coordinates of a laminate with respect to an arbitrary tagged triangulation. Our treatment essentially follows \cite{fomin2018cluster}, but with slight changes in conventions --hence the adjective \emph{dual} in the term \emph{dual shear coordinates}. Let $T$ be a tagged triangulation of $\surf$, and let $\lam$ be either a tagged arc or a simple closed curve on $\surf$. We define the vector $\Sh_T(\lam)=(\Sh_T(\lam)_{\gam})_{\gam\in T}$ of dual shear coordinates of $\lam$ with respect to $T$ in steps as follows. 

\begin{case} Assume that $T$ has non-negative signature $\delta_T:\punct\rightarrow\{1,0,-1\}$. Then, at any given puncture $p$, either all tagged arcs in $T$ incident to $p$ are tagged plain, or exactly two tagged arcs in $T$ are incident to $p$, their underlying ordinary arcs being isotopic to each other, their tags at $p$ differing from one another, and the tags at their other endpoint being plain. 

Following \cite{fomin2008cluster}, represent $T$ through an ideal triangulation $T^\circ$ defined as follows. For each $p\in\mathbb{P}$ such that $\delta_T(p)=0$, let $i_p,j_p,$ be the two tagged arcs in $T$ that are incident to $p$, with $i_p$ tagged \emph{plain} at $p$, and $j_p$ tagged \emph{notched} at $p$. Then $T^\circ:=\{\gam^\circ\ | \ \gam\in T\}$ is obtained from $T$ by setting $\gam^\circ:=\gam$ for every $\gam\in T$ both of whose ends are tagged \emph{plain}, and by replacing each $j_p$ with a loop $j_p^\circ$ closely surrounding $i_p^\circ$ for $i$ and $j$ as above. Thus, the corresponding ordinary arcs $i_p^\circ,j_p^\circ\in T^\circ$ form a self folded triangle that has $i_p^\circ$ as the folded side, and $j_p^\circ$ as the loop closely surrounding $i_p^\circ$. See \cite[Sections 9.1 and 9.2]{fomin2008cluster}.

If $\lam$ is a simple closed curve, set $L:=\lam$. Otherwise, let $L$ be the curve obtained from $\lam$ by modifying its two ending segments according to the following rules:
\begin{itemize}
\item any endpoint incident to a marked point in the boundary is slightly slided along the boundary segment lying immediately to its right as in Figure \ref{Fig_rhohalf} (here, we stand upon the surface using its orientation, and look from the marked point towards the interior of surface, note that we use the orientation of $\Sig$ to determine what is right and what is left);
        \begin{figure}[!h]
        \caption{Slightly sliding endpoints lying on the boundary}\label{Fig_rhohalf}
                \centering
                \includegraphics[scale=.1]{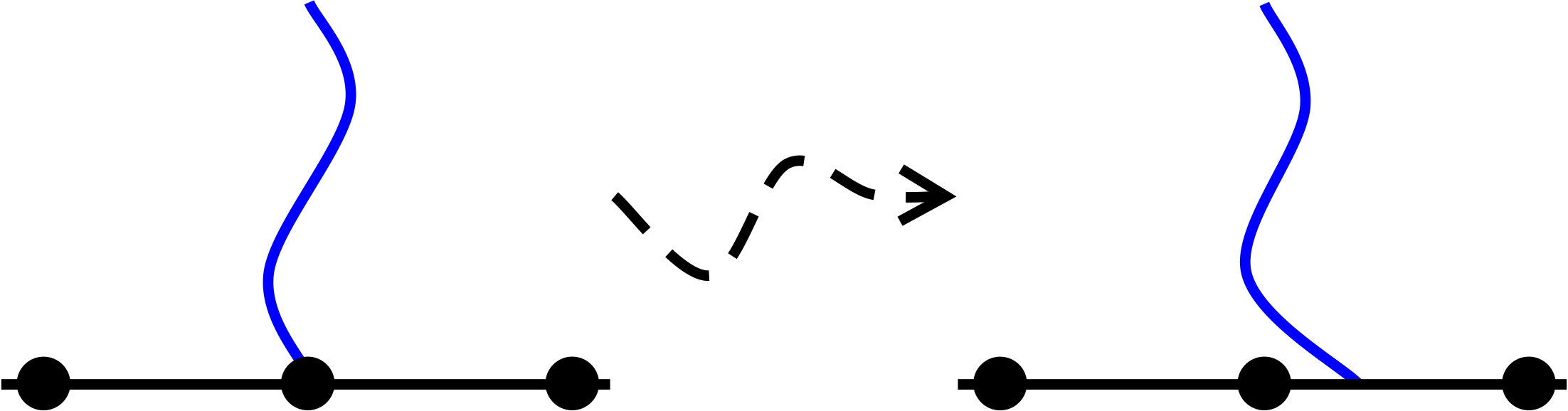}
        \end{figure}
\item any ending segment of $\lam$ tagged \emph{plain} at a puncture $q$ is replaced with a non-compact curve segment spiralling towards $q$ in the clockwise sense;
\item any ending segment of $\lam$ tagged \emph{plain} at a puncture $q$ is replaced with a non-compact curve segment spiralling towards $q$ in the counter-clockwise sense.
\end{itemize}

Take an arc $\gam\in T$. In order to define the shear coordinate $\Sh_T(\lam)_{\gam}$ we need to consider two subcases.

\begin{subcase}
Suppose that the ordinary arc $\gam^{\circ}\in T^\circ$ is not the folded side of a self-folded triangle of $T^\circ$. Then $\gam^\circ$ is contained in exactly two ideal triangles of $T^\circ$, and the union $\overline{\lozenge}_{\gam^\circ}$ of these two triangles is either a quadrilateral (if $\gamma^\circ$ does not enclose a self-folded triangle) or a digon (if $\gamma^\circ$ encloses a self-folded triangle). In any of these two situations, the complement in $\overline{\lozenge}_{\gam^\circ}$ of the union of the arcs belonging to $T^\circ\setminus\{\gam^\circ\}$ can be thought to be an open quadrilateral $\lozenge_{\gam^\circ}$ in which $\gamma^\circ$ sits as a diagonal. The shear coordinate $\Sh_T(\lam)_{\gam}$ is defined to be the number of segments  of $\lozenge_{\gam^\circ}\cap L$ that form the shape of a letter $Z$ when crossing $\gam^\circ$ minus the  number of segments  of $\lozenge_{\gam^\circ}\cap L$ that form the shape of a letter $S$ when crossing $\gam^\circ$.
\end{subcase}

\begin{subcase}
Suppose that the ordinary arc $\gam^{\circ}\in T^\circ$ is the folded side of a self-folded triangle of $T^\circ$. Then there is a puncture $p\in\punct$ such that $\gam=i_p$ and $\gam^\circ=i_p^\circ$. Define 
\[
\Sh_T(\lam)_{i_p}:=\Sh_T(\lam')_{j_p}.
\]
where $\lam'$ is obtained from $\lam$ by switching the tags of $\lam$ at the puncture $p$.
\end{subcase}
\end{case}

\begin{case} Assume now that $T$ is an arbitrary tagged triangulation of $\surf$. The set of punctures at which $T$ has negative signature is the inverse image $\delta_T^{-1}(-1)$. Set $T'$ to be the tagged triangulation obtained from $T$ by changing from notched to plain all the tags incident to punctures in~$\delta_T^{-1}(-1)$. Thus, $T'$ is a tagged triangulation of signature zero, so dual shear coordinates with respect to $T'$ have already been defined. Set
\[
\Sh_T(\lam):=\Sh_{T'}(\lam'),
\]
where $\lam'$ is obtained from $\lam$ by switching all the tags of $\lam$ at the punctures belonging to the set $\delta_T^{-1}(-1)$.
\end{case}

\begin{exmp}\label{ex:FT-shear-coords-VS-dual-shear-coords} In Figure \ref{Fig_dualShearCoords_vs_FSTShearCoords},
        \begin{figure}[!h]
        \caption{Left: computation of the vector of dual shear coordinates $\Sh_T(\lam)$. Right: computation of the vector of Fomin--Thurston's shear coordinates $\Sh_{\overline{T}}^{\FST}(\overline{\lam})$.}\label{Fig_dualShearCoords_vs_FSTShearCoords}
                \centering
                \includegraphics[scale=.075]{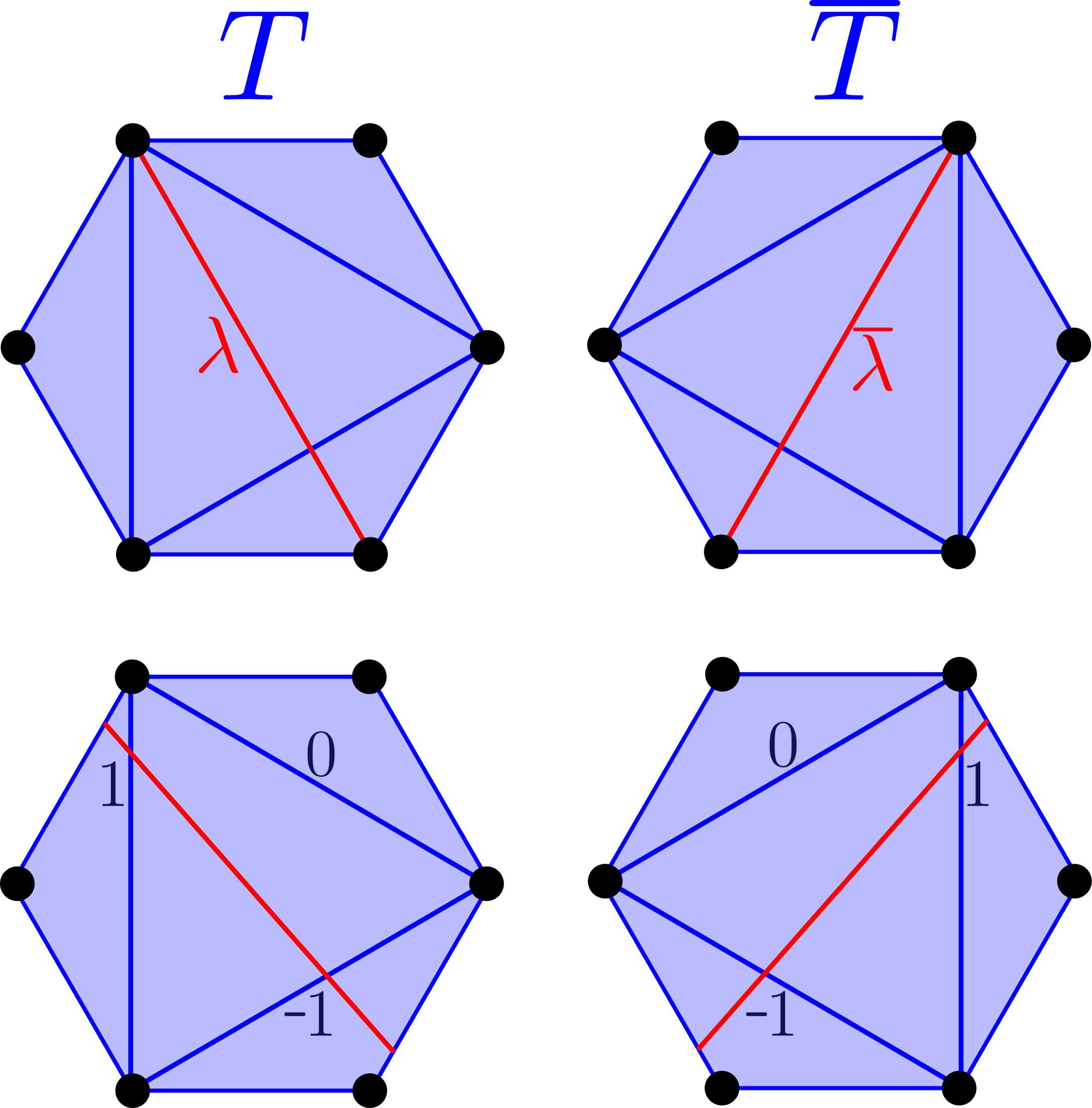}
        \end{figure}
the reader can glimpse the relation between the dual shear coordinates we have defined above and the shear coordinates used by Fomin--Thurston, namely,
\begin{equation}\label{eq:relation-shear-coord-vs-dual-shear-coord}
\Sh_T(\lam)=\Sh_{\overline{T}}^{\FST}(\overline{\lam}),
\end{equation}
where $\Sh_T(\lam)$ is the vector of dual shear coordinates we have defined above, $\overline{T}$ and $\overline{\lambda}$ are the images of $T$ and $\lambda$ in the surface obtained as the \emph{mirror image} of $\surf$, and $\Sh_{\overline{T}}^{\FST}(\overline{\lam})$ is the vector of shear coordinates of Fomin--Thurston, \cite[Definition 13.1]{fomin2018cluster}.
\end{exmp}

With these definitions in place we can restate~\cite[Theorem 13.5]{fomin2018cluster} for dual shear coordinates as follows.

\begin{thm}\label{thm:behavior-of-dual-shear-coords-under-matrix-mutation}
    Let $T$ and $T'$ be tagged triangulations of $\surf$ related by the flip of a tagged arc $k\in T$, and let $\lam$ be either a tagged arc or a simple closed curve on $\surf$. Then
\[    
    \left[\begin{array}{c}-B(T')\\
    \Sh_{T'}(\lam)\end{array}\right]
    =\mu_k\left(
    \left[\begin{array}{c}-B(T)\\
    \Sh_T(\lam)\end{array}\right]\right).
\]
\end{thm}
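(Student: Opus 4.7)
The plan is to deduce the statement from Fomin--Thurston's matrix-mutation formula for shear coordinates, \cite[Theorem 13.5]{fomin2018cluster}, by passing to the mirror surface $\overline{\bSigma}$ as hinted at in Example~\ref{ex:FT-shear-coords-VS-dual-shear-coords}. First I would record that reversing the orientation of $\Sigma$ swaps the ``clockwise'' direction appearing in Definition~\ref{adjacency matrix}, so every summand $B^\Delta_T$ changes sign, giving the identities
\[
B(\overline{T}) = -B(T), \qquad B(\overline{T'}) = -B(T').
\]
Combining these with the identity $\Sh_T(\lam) = \Sh^{\FST}_{\overline{T}}(\overline{\lam})$ from Equation~\eqref{eq:relation-shear-coord-vs-dual-shear-coord} (and its analogue for $T'$), one obtains
\[
\left[\begin{array}{c}-B(T)\\ \Sh_T(\lam)\end{array}\right]
= \left[\begin{array}{c}B(\overline{T})\\ \Sh^{\FST}_{\overline{T}}(\overline{\lam})\end{array}\right],
\qquad
\left[\begin{array}{c}-B(T')\\ \Sh_{T'}(\lam)\end{array}\right]
= \left[\begin{array}{c}B(\overline{T'})\\ \Sh^{\FST}_{\overline{T'}}(\overline{\lam})\end{array}\right].
\]

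Next I would apply \cite[Theorem 13.5]{fomin2018cluster} to the flip $\overline{T}\mapsto\overline{T'}$ at the image of $k$, and to the curve $\overline{\lam}$; tagged arcs are accommodated uniformly through the elementary-lamination dictionary, which is also part of~\cite{fomin2018cluster}. That theorem states precisely that Fomin--Zelevinsky's matrix mutation $\mu_k$ sends the first extended matrix on the right-hand side above to the second. Substituting the two identities just displayed then yields the asserted equality. Thus the bulk of the argument is a translation via the mirror.

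The main obstacle, and really the only step with genuine content, is the verification of Equation~\eqref{eq:relation-shear-coord-vs-dual-shear-coord}. This reduces to a case analysis following the three-step definition of $\Sh_T$: in the generic subcase one checks that the mirror exchanges Z-shaped and S-shaped local configurations around $\gamma^\circ$, matching the two opposite sign conventions; in the self-folded-triangle subcase one verifies that the auxiliary tag switch at the puncture is equivariant under the mirror; and in the negative-signature case one confirms that the reduction to zero-signature triangulations by simultaneously switching notched tags commutes with the mirror operation. None of these checks is deep, but each requires careful bookkeeping of orientation and tagging conventions before the conclusion can be harvested from \cite[Theorem 13.5]{fomin2018cluster}.
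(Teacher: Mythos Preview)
Your proposal is correct and matches the paper's approach exactly: the paper presents Theorem~\ref{thm:behavior-of-dual-shear-coords-under-matrix-mutation} simply as a restatement of \cite[Theorem 13.5]{fomin2018cluster} for dual shear coordinates, without writing out a proof, and your argument via the mirror surface and Equation~\eqref{eq:relation-shear-coord-vs-dual-shear-coord} is precisely the translation the paper has in mind (cf.~Example~\ref{ex:FT-shear-coords-VS-dual-shear-coords}). You have supplied the bookkeeping that the paper leaves implicit.
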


In view of the definition of the piecewise linear map
$\gam_k^B$ in  Equation~\eqref{eq:gproj-transf} and the
definition of the Fomin-Zelevinsky matrix mutation rule,
Theorem~\ref{thm:behavior-of-dual-shear-coords-under-matrix-mutation} implies in particular 
\begin{equation} \label{eq:unraveled-recursion-satisfied-by-dual-shear-coords}
\Sh_{T'}(\lam)=\gam^{B(T)}_k(\Sh_T(\lam)).    
\end{equation}
See also Remark~\ref{rk:Mut-Comp}~(2).

\subsection{The isomorphism of framed KRS-monoids 
\texorpdfstring{$\Lamin(\bSigma)\cong\DecIrr^\tau(A)$}{DecIrrτ(A)=Lam(Σ)}}
Generalizing \cite[Theorem 10.13 and Proposition 10.14]{geiss2022schemes} from the unpunctured to the punctured case, the main result of \cite{geiss2023laminations} states with our conventions

\begin{thm}\cite[Theorem 1.1.]{geiss2023laminations}\label{thm:Lams-vs-taured-comps-iso}
    Let $\bSigma=\surf$ be a surface with non-empty boundary. For each  
tagged triangulation $T$ of $\bSigma$ there is a unique isomorphism   
of partial KRS-monoids
\[
  \pi_T: (\Lamin(\bSigma),\Intn^*,+) \ra
  (\DecIrr^{\tau-}(A(T)), e_{A(T)},\oplus), 
\]
such that the diagram of functions and sets
$$
\xymatrix{
\Lamin(\bSigma) \ar[rr]^{\pi_T} \ar[dr]_{\Sh_T} & & \DecIrr^{\tau^-}(A(T)) \ar[dl]^{\mathbf{g}_{A(T)}} \\
& \mathbb{Z}^{T} &
}
$$
where $A(T)$ is the dual Jacobian algebra of the quiver with potential associated to $T$ in \cite[Definition 4.1]{cerulli2012quivers}, \cite[Definitions 3.1 and 3.2]{labardini2016quivers}, and $\bg_{A(T)}$ is the generic
injective g-vector.
\end{thm}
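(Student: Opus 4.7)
The plan is to reduce the theorem to a statement about indecomposable elements and then to construct $\pi_T$ by transporting ``base cases'' via mutation, using the fact that both sides are framed by vectors obeying the same piecewise-linear recursion. Since $A(T)$ is tame by \cite{geiss2016the}, Theorem \ref{thm:tau-red-comps-form-a-KRS-monoid}(c) gives an isomorphism $\DecIrr^{\tau^-}(A(T)) \cong \KRS(\DecIrr^{\tau^-}_{\ind}(A(T)), e_{A(T)})$; on the other side, $\Lamin(\bSigma) = \KRS(\cLC^*_\tau(\bSigma)/_\simeq, \Intn^*)$ holds by definition. Thus constructing $\pi_T$ amounts to producing a pairing- and framing-preserving bijection between the two sets of indecomposables. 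Uniqueness is then automatic: by Theorem \ref{thm:tau-red-comps-form-a-KRS-monoid}(b) the generic injective $g$-vector is a faithful framing on $\DecIrr^{\tau^-}_{\ind}(A(T))$, and dual shear coordinates separate simple marked curves (via the mirror identification \eqref{eq:relation-shear-coord-vs-dual-shear-coord} and the corresponding injectivity result of Fomin--Thurston), so commutativity of the triangle forces $\pi_T(\lam)$ to be the unique indecomposable $\tau^-$-regular component whose generic injective $g$-vector equals $\Sh_T(\lam)$.

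For existence, I would exploit the fact that both framings transform under flip/mutation by the same piecewise-linear rule $\gam_k^{B(T)}$: on the surface side this is \eqref{eq:unraveled-recursion-satisfied-by-dual-shear-coords}, and on the representation side this is Proposition \ref{prp:JaMut}(c), together with the key fact that $\tilde\mu_k$ is itself an isomorphism of partial KRS-monoids by Proposition \ref{prp:JaMut}(e). The plan is then: (i) for each equivalence class $[\lam] \in \cLC^*_\tau(\bSigma)/_\simeq$ choose a tagged triangulation $T_\lam$ in which $\lam$ takes a particularly simple form --- either $\lam$ is, up to tagging, one of the arcs of $T_\lam$, or $\lam$ is a simple closed curve meeting $T_\lam$ in a controlled ``band'' pattern; (ii) in $T_\lam$ define $\pi_{T_\lam}(\lam)$ explicitly, taking for arc-like cases the orbit closure of a concrete (decorated) representation whose injective $g$-vector can be read off from the crossing pattern of $\lam$ with $T_\lam$ (in the spirit of the constructions of Section \ref{base case section}), and for genuine simple closed curves the closure of the one-parameter family of bricks provided by Theorem \ref{thm:tau-red-comps-form-a-KRS-monoid}(c); (iii) propagate by setting $\pi_T(\lam) := (\tilde\mu_{k_s}\circ\cdots\circ\tilde\mu_{k_1})(\pi_{T_\lam}(\lam))$ for any sequence of flips taking $T_\lam$ to $T$. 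Because both framings satisfy the same recursion $\gam_k^{B(T)}$ and the generic injective $g$-vector is faithful, the definition in (iii) is independent of the chosen sequence of flips; because $\tilde\mu_k$ is a KRS-monoid morphism, compatibility with $\oplus/+$ and with the pairings $e_{A(T)}$ and $\Intn^*$ needs only to be checked in the base triangulations $T_\lam$.

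The main obstacle is step (ii) in the punctured case. For $\punct=\varnothing$ the algebra $A(T)$ is gentle, so the relevant indecomposable representations are known explicit string or band modules and the base-case computation reduces to bookkeeping; but when $\punct\neq\varnothing$ the indecomposable $A(T)$-modules are not classified (see the remark after Theorem \ref{thm:bangle-equals-generic-basis-simp}), so one cannot just write down a representative and be done. I would attack this by choosing $T_\lam$ so that all the crossings of $\lam$ with the triangulation occur inside a region on which $A(T_\lam)$ restricts to a gentle (or skewed-gentle) subalgebra adapted to the local geometry of $\lam$, construct the representation there, and then show that its orbit closure in the ambient variety is $\tau^-$-regular, indecomposable, and has generic injective $g$-vector equal to $\Sh_{T_\lam}(\lam)$ and generic symmetrized $E$-invariant with any other base-case representation equal to the corresponding marked intersection number $\Intn^*$. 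Once these base-case matchings are established, the mutation argument of the previous paragraph finishes the proof.
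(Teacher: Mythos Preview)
Your broad strategy---anchor at tractable base cases and propagate by mutation, using that both $\Sh_T$ and $\bg_{A(T)}$ obey the same recursion $\gam_k^{B(T)}$---matches the paper's. But the paper makes a crucial design choice that you have inverted, and this inversion creates a genuine gap.

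The paper fixes \emph{one} global base triangulation $T_0$, namely any tagged triangulation of signature zero. For such $T_0$ the algebra $A(T_0)$ is skewed-gentle, and the combinatorial description of generically $\tau^-$-regular components available from \cite{geiss2023onhomomorphisms} then yields the entire isomorphism $\pi_{T_0}$ at once---in particular the matching of the pairings $\Intn^*$ and $e_{A(T_0)}$ for \emph{all} pairs of indecomposables simultaneously. One then transports the whole isomorphism to an arbitrary $T$ via Proposition~\ref{prp:JaMut}(c),(e) and Theorem~\ref{thm:behavior-of-dual-shear-coords-under-matrix-mutation}, producing the commutative square displayed after the theorem.

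Your proposal instead chooses a separate base triangulation $T_\lam$ for each laminate $\lam$. This is enough to pin down $\pi_{T_\lam}(\lam)$ and its $g$-vector, but it does not let you verify $e_{A(T)}(\pi_T(\lam),\pi_T(\mu))=\Intn^*(\lam,\mu)$: although $\tilde\mu_k$ preserves the pairing, you still need to compute the pairing at \emph{some} common triangulation, and at $T_\lam$ the component $\pi_{T_\lam}(\mu)$ is only known as an iterated mutation of $\pi_{T_\mu}(\mu)$, with no explicit description available. Your sentence ``compatibility \ldots\ needs only to be checked in the base triangulations $T_\lam$'' hides exactly this problem---for two distinct laminates there is no single base triangulation at which both representatives are explicit, so the ``generic symmetrized $E$-invariant with any other base-case representation'' you ask for in your last paragraph cannot actually be computed. (Surjectivity of $\pi_T$ is also not produced by your steps (i)--(iii); one can recover it a posteriori from both framings being bijections onto $\ZZ^n$, but that still leaves the pairing issue untouched.) The fix is precisely the paper's: work at a single signature-zero triangulation so that $A(T_0)$ is skewed-gentle and the \emph{entire} indecomposable spectrum, together with all mutual $E$-invariants, is available combinatorially at once.
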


Roughly speaking, the proof strategy followed in \cite{geiss2023laminations} consists of two steps:
\begin{enumerate}
    \item the statement of the theorem is shown to hold for tagged triangulations of signature zero, exploiting  heavily the fact that for those triangulations the Jacobian algebra is skewed-gentle, and that in this situation a combinatorial
    description of the generically $\tau^-$-regular components
    is now available~\cite{geiss2023onhomomorphisms};
    \item given a tagged triangulation $T$ for which the theorem holds, and another tagged triangulation $T'$ obtained by flipping an arc $k\in T$, Proposition~\ref{prp:JaMut} and Theorem~\ref{thm:behavior-of-dual-shear-coords-under-matrix-mutation} are applied to produce a commutative diagram
\[
    \xymatrix{
    & & & & \DecIrr^{\tau^-}(A(T)) \ar[dl]^{\mathbf{g}_{A(T)}}\ar[dddd]^{\cong}_{\widetilde{\mu}_k}\\
     & & & \mathbb{Z}^{T} \ar[dd]|-{\quad\gamma_k^{B(T)}} & \\
    \Lamin(\bSigma) \ar@/^2.5pc/[uurrrr]|-{\pi_T}^{\cong\quad} \ar@/_2.5pc/@{.>}[ddrrrr]|-{\quad \pi_{T'}:=\widetilde{\mu}_k\circ \pi_T}  \ar[urrr]_{\Sh_T} \ar[drrr]^{\Sh_{T'}} & & & & \\
     & & & \mathbb{Z}^{T} &\\
     & & &  & \DecIrr^{\tau^-}(A(T')) \ar[ul]_{\mathbf{g}_{A(T')}}
    }
\]
and this allows to deduce that the theorem holds then for $T'$ as well.
\end{enumerate}
Since any two tagged triangulations of a surface with non-empty boundary are related by a finite sequence of flips, and since any such surface certainly admits at least one triangulation of signature zero, these two steps yield a proof of Theorem \ref{thm:Lams-vs-taured-comps-iso}.

\section{Triangulations adapted to closed curves}
\label{base case section}
Let $\alpha$ be an \emph{essential loop} on $\bSigma=\surf$, i.e., a non-contractible simple closed curve that is furthermore not contractible to a puncture, and let $T=\{\tau_1,\ldots,\tau_n\}$ be a tagged triangulation of $\bSigma$. The \emph{band graph} associated to $\alpha$ with respect to $T$, cf. \cite[Definition 3.4 and \S8.3]{musiker2013bases}, will be denoted $G(T,\alpha)$.
The polynomial $F_\zeta^T$ defined in \cite[Definition 3.14]{musiker2013bases} (see also \cite[\S8.3]{musiker2013bases}), with $\zeta:=\alpha$, will be called \emph{snake $F$-polynomial} and denoted $F_{G(T,\alpha)}$. Similarly, the integer vector from \cite[Definition 6.1]{musiker2013bases} will be called the \emph{snake $g$-vector} and denoted $\bg_{G(T,\alpha)}$. Notice that $$F_{G(T,\alpha)} = \displaystyle \sum_{P \in \mathcal{P}(G(T,\alpha))} y(P)$$ where  $\mathcal{P}(G(T,\alpha))$ is the collection of all good matchings of the band graph $G(T,\alpha)$.
Furthermore, by \cite[Proposition 6.2]{musiker2013bases}, we have
\[
\bg_{G(T,\alpha)}=\deg\left(\frac{x(P_-)}{\operatorname{cross}(T,\alpha)}\right),
\]
where $P_-$ is the \emph{minimal matching} of the band graph $G(T,\alpha)$, cf. \cite[Definition 3.7]{musiker2013bases}, and $\deg:\mathbb{Z}[x_1^{\pm1},\ldots,x_n^{\pm1},y_1,\ldots,y_n]\rightarrow \mathbb{Z}^n$ is the grading defined by
\[
\deg(x_i)=\mathbf{e}_i \qquad \text{and} \qquad \deg(y_j):=-\mathbf{b}(T)_j
\]
as in \cite[Proposition 6.1]{fomin2007cluster}, $\mathbf{b}(T)_j$ being the $j^{\operatorname{th}}$ column of $B(T)$, 
see Definition~\ref{adjacency matrix}.

\begin{prop} 
\label{prop:existence-ad-hoc-triangulation-for-closed-curve}
For each non-contractible simple closed curve $\alpha$ on $\bSig$ there exists a triangulation $T_{\alpha}$ without self-folded triangles such that:
\begin{itemize}
\item the Jacobian algebra of the restriction $(Q'(T_\alpha)|_I,{ S'}(T_\alpha)|_I)$ is gentle, where $I$ is the set of arcs crossed by $\alpha$;
\item the band module $M(T_\alpha,\alpha,\lambda,1)$ over $\Lambda_I:=\jacobalg{Q(T_\alpha)|_I,S(T_\alpha)|_I}$ of quasi-length $1$ associated to the parameter $\lambda\in\CC^*$ is a module over the Jacobian algebra $A(T_{\alp}):=\cP_\CC(Q'(T_{\alp}), S'(T_{\alp}))$ as well and satisfies:
\begin{align*}
     F_{M(T_\alpha,\alpha,\lambda,1)} &= F_{G(T,\alpha)} & E_{A(T_\alpha)}(M(T_\alpha,\alpha,\lambda,1))&=1   \\
     \bg_{A(T_\alpha)}(M(T_\alpha,\alpha,\lambda,1)) &= \Sh_{T_\alpha}(\alpha) = \bg_{T_\alpha}(G(T_\alpha,\alpha)).
\end{align*}
\end{itemize}
\end{prop}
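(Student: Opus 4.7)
The plan is to construct $T_\alpha$ as a triangulation adapted to $\alpha$, so that all the relevant computations reduce to standard band-module calculations over a gentle sub-algebra, and then to transfer these to the full Jacobian algebra $A(T_\alpha)$ by ``extension by zero.''

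First, I would construct $T_\alpha$. Since $\alpha$ is a non-contractible simple closed curve on $\bSigma$, a regular neighborhood $N_\alpha$ of $\alpha$ is homeomorphic to an open annulus. Using the hypothesis $|\marked|\geq 2$, I would triangulate so that: (i) $T_\alpha$ has no self-folded triangle; (ii) the arcs of $T_\alpha$ crossing $\alpha$ do so transversally in exactly one point, forming a cyclic ``belt'' around $\alpha$ that gives a zig-zag triangulation of $N_\alpha$; (iii) the induced sub-quiver $Q'(T_\alpha)|_I$ is of affine type $\widetilde A_{m-1}$ with $m=|I|$, and $S'(T_\alpha)|_I$ consists precisely of the oriented $3$-cycles coming from the triangles adjacent to $\alpha$ — in particular, no puncture-induced cycle falls entirely inside $I$. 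This is a well-known construction in the surface-cluster literature (see for example \cite{musiker2013bases,geiss2022schemes}); the hypothesis $|\marked|\geq 2$ is used to thread the belt arcs to boundary-based endpoints and to perturb tags to remove self-folded triangles.

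Second, gentleness of $(Q'(T_\alpha)|_I, S'(T_\alpha)|_I)$ is then immediate from (iii): each arc in $I$ lies in exactly two triangles of $T_\alpha$ adjacent to $\alpha$, producing at most two incoming and two outgoing arrows in $Q'(T_\alpha)|_I$, and the $3$-cycle potential gives the usual gentle relations. Let $\Lambda_I = \jacobalg{Q(T_\alpha)|_I, S(T_\alpha)|_I}$ be the resulting gentle algebra, and let $M := M(T_\alpha,\alpha,\lambda,1)$ be the band module over $\Lambda_I$ of quasi-length $1$ with parameter $\lambda \in \CC^\ast$, built by the standard string-algebra recipe along the cyclic word determined by $\alpha$.

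Third, I would extend $M$ to an $A(T_\alpha)$-module by setting $M(i)=0$ for $i\notin I$ and $M(a)=0$ for every arrow $a$ of $Q'(T_\alpha)$ not in $Q'(T_\alpha)|_I$, and check that every cyclic derivative $\partial_a S'(T_\alpha)$ acts as zero on this extension. Cyclic derivatives along arrows in $I$ differ from those of $S'(T_\alpha)|_I$ only by summands containing at least one arrow outside $I$, which is zero on $M$, so they vanish because $M$ is a $\Lambda_I$-module. Cyclic derivatives along arrows outside $I$ are automatically zero on $M$ since they factor through a vertex outside $I$.

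Finally, the three numerical identities follow from standard technology applied in this well-controlled situation: the equality $F_{M}=F_{G(T_\alpha,\alpha)}$ follows from the explicit parametrization of quiver Grassmannians of band modules over gentle algebras by ``canonical'' submodules, which matches the good perfect matchings of $G(T_\alpha,\alpha)$ in the sense of Musiker--Schiffler--Williams. The equality $\bg_{A(T_\alpha)}(M)=\Sh_{T_\alpha}(\alpha)$ can be obtained directly from \eqref{eq:gvec} by computing, vertex by vertex along the belt, the kernel of $M(\gamma_k)$; equivalently, it follows from Theorem~\ref{thm:Lams-vs-taured-comps-iso} once we identify the $\tau^-$-regular component of $M$ with the laminate of $\alpha$. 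The combinatorial identity $\Sh_{T_\alpha}(\alpha)=\bg_{G(T_\alpha,\alpha)}$ is a direct comparison of the definitions of dual shear coordinates and the snake $g$-vector via the minimal matching $P_-$, using the explicit shape of $T_\alpha$ around $\alpha$. The value $E_{A(T_\alpha)}(M)=1$ follows from the standard fact that band modules of quasi-length $1$ over gentle algebras are bricks with $\dim\operatorname{Ext}^1_{\Lambda_I}(M,M)=1$ and that the decoration vanishes, so formula \eqref{eq:def-of-sym-proj-E-inv} collapses to $1$; this also confirms that $\overline{\operatorname{GL}_{\bd}\cdot M}$ is a generically $\tau^-$-regular irreducible component.

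The main obstacle I expect is step one: producing $T_\alpha$ satisfying (i)--(iii) \emph{simultaneously} in the presence of punctures, especially ensuring that no cycle of $S'(T_\alpha)|_I$ comes from a puncture visited by two belt arcs. A secondary (but routine) obstacle is reconciling sign and labeling conventions between the snake/band-graph side and the representation-theoretic side so that the identities hold on the nose; once conventions are fixed, this reduces to a combinatorial bookkeeping exercise aligned with \cite{geiss2023laminations}.
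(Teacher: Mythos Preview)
Your overall strategy aligns with the paper's, but your construction of $T_\alpha$ has a genuine gap. You propose that the arcs of $T_\alpha$ crossing $\alpha$ each do so in exactly one point, so that $Q'(T_\alpha)|_I$ is an affine $\widetilde{A}$-quiver. This is achievable only when there are marked points on \emph{both} sides of $\alpha$. When $\alpha$ is a separating curve and one of the two pieces is a closed positive-genus surface carrying no marked points whatsoever, every arc entering that piece must eventually exit, and hence crosses $\alpha$ an even number of times; your condition~(ii) is then impossible. The hypothesis $|\marked|\geq 2$ you invoke (which, incidentally, is not part of the proposition) does not help, since all marked points may lie on the other side. The paper treats this explicitly as its ``Case~2'': one bounds $\alpha$ by a single arc $\gamma$ cutting off a one-holed genus-$g$ surface $\bSigma_{g,1}$. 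In that situation arcs in $I$ may meet $\alpha$ many times and the restricted quiver is not a simple cycle; nonetheless the band associated to $\alpha$ still has a unique source and a unique sink (Lemma~\ref{bandgraphform}), the band graph is ``zig-zag'', and $\Lambda_I$ is hereditary and gentle. Your downstream arguments for gentleness, extension-by-zero, and the $F$-polynomial adapt to this setup, but they require the correct description of the band.

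Two smaller points. First, your computation of $E_{A(T_\alpha)}(M)=1$ via $\dim\operatorname{Ext}^1_{\Lambda_I}(M,M)=1$ needs a justification that the $E$-invariant over $A(T_\alpha)$ coincides with the one over $\Lambda_I$; the paper instead uses Derksen--Weyman--Zelevinsky's identity $E(M)=\dim\End_{A(T_\alpha)}(M)+\dimv(M)\cdot\bg_{A(T_\alpha)}(M)$, together with $\End_{A(T_\alpha)}(M)=\End_{\Lambda_I}(M)\cong\CC$ (Lemma~\ref{lem:TrivEnd}, proved via Krause's description of maps between band modules) and a direct check that $\dimv(M)\cdot\bg_{A(T_\alpha)}(M)=0$. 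Second, your fallback suggestion to deduce $\bg_{A(T_\alpha)}(M)=\Sh_{T_\alpha}(\alpha)$ from Theorem~\ref{thm:Lams-vs-taured-comps-iso} is circular here: one would first need to know that $M$ lies in the component $\pi_{T_\alpha}(\alpha)$, which is precisely what the proposition (and its corollary) establish. The direct vertex-by-vertex computation via~\eqref{eq:gvec} that you mention first is the correct route, and is what the paper carries out.
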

\begin{proof}
Cutting $\bSigma$ along $\alpha$ we are left with at most two marked surfaces $(S_1,M_1)$ and $(S_2,M_2)$ -- note that if $\alpha$ is non-separating then one $S_i$ will be empty. To construct $T_{\alpha}$ we split the task into two cases. 

\setcounter{case}{0}
\begin{case}
If $M_i \neq \emptyset$ for any $i \in \{1,2\}$ then there exist compatible arcs $\gamma_1$ and $\gamma_2$ in $\bSig$ which bound $\alpha$ in a cylinder $C_{1,1}$. Define $T_{\alpha}$ to be any triangulation containing both $\gamma_1$ and $\gamma_2$, with the extra property of not having self-folded triangles. See Figure \ref{case1triangulation}.

\begin{figure}[ht]
\caption{The two types of $\alpha$ occurring in Case 1, together with the arcs $\gamma_1$ and $\gamma_2$ bounding it in a cylinder.}
\label{case1triangulation}
\begin{center}
\includegraphics[width=13cm]{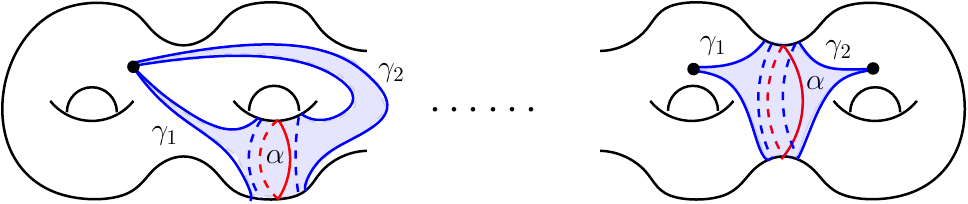}
\end{center}
\end{figure}
\end{case}

\begin{case} Otherwise, $\alpha$ is a separating curve where $S_i \neq \emptyset$ and $M_i = \emptyset$ for some $i \in \{1,2
\}$. In this case there exists an arc $\gamma \in \bSig$ bounding $\alpha$ in a bordered surface $\bSig_{g,1}$ of genus $g$ with one boundary component and one marked point. Define $T_{\alpha}$ to be any triangulation containing $\gamma$, with the extra property of not having self-folded triangles. See Figures \ref{case2triangulation} and \ref{bandgraphform2}.

\begin{figure}[ht]
\caption{The type of $\alpha$ occurring in Case 2, together with the arc $\gamma$ bounding it in a genus $g$ surface with one boundary component and one marked point.}
\label{case2triangulation}
\begin{center}
\includegraphics[width=13cm]{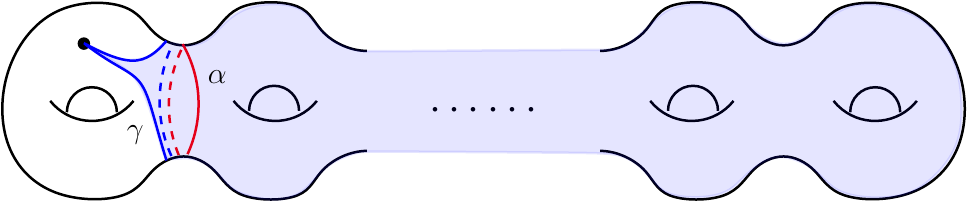}
\end{center}
\end{figure}
\end{case}

In both cases, it is clear that $\Lambda_I:=\jacobalg{Q(T_\alpha)|_I,S(T_\alpha)|_I}$ is a gentle algebra. That $M(T_\alpha,\alpha,\lambda,1)$ is a module also over $A(T_{\alp})$ follows immediately from \cite[Proposition 8.9]{derksen2008quivers} or by direct inspection.

It is easy to check that $A(\overline{T_\alpha})=A(T_\alpha)^{\operatorname{op}}$ and $M(\overline{T_\alpha},\overline{\alpha},\lambda,1)\cong D(M(T_\alpha,\alpha,\lambda,1))$ as representations of $A(\overline{T_\alpha})$. Therefore, by \eqref{eq:gvec}, \cite[proof of Theorem 10.0.5]{labardini2010quivers} and \eqref{eq:relation-shear-coord-vs-dual-shear-coord}, we have 
\[
    \bg_{A(T_\alpha)}(M(T_\alpha,\alpha,\lambda,1))  = \bg_{A(\overline{T_\alpha})}^{\prj}(M(\overline{T_\alpha},\overline{\alpha},\lambda,1))=
    \Sh_{\overline{T_\alpha}}^{\FST}(\overline{\alpha}) = \Sh_{T_\alpha}(\alpha).
\] 
The equality $\Sh_{T_\alpha}(\alpha)=\bg_{G(T_\alpha,\alpha)}$ follows from \cite[Proposition 10.14]{geiss2022schemes}. 

Recall, that our quiver $Q'(T_\alp)$ is opposite to 
the quiver $Q(T)$
which was used in~\cite{labardini2010quivers} and in~\cite{geiss2022schemes}.

\begin{lem}
\label{bandgraphform}
The band graph $G(T_\alpha,\alpha)$ is a `zig-zag' band graph whose corresponding band is given in Figure \ref{bandgraph}.

\end{lem}

\begin{figure}[ht]
\caption{Here we depict the possible shapes of what we call `zig-zag' band graphs. The only restriction is that $n \geq 2$.}
\label{bandgraph}
\begin{center}
\includegraphics[width=13cm]{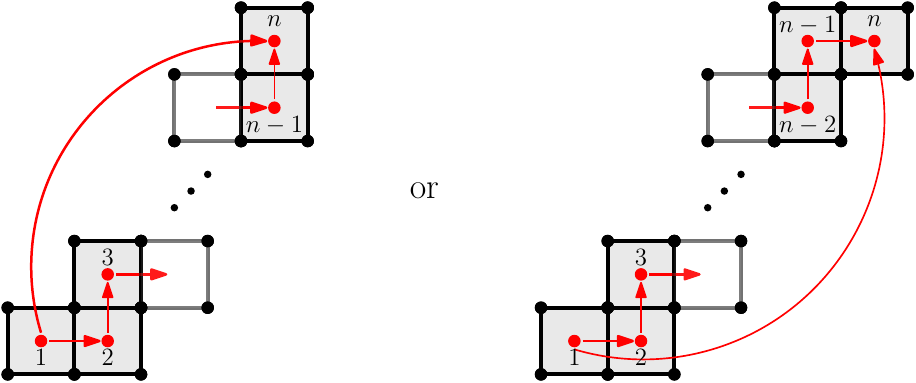}
\end{center}
\end{figure}

\begin{proof}

When $T_{\alpha}$ is defined via case $1$, then a direct computation obtains the $n=2$ band graph in Figure \ref{bandgraph}.

So, suppose $T_{\alpha}$ is defined via case $2$. Then there exists a unique triangle $\Delta$ of $T_{\alpha}$, lying inside $\bSigma_{g,1}$, such that $\gamma$ is an edge of $\Delta$. Let us denote the remaining two edges of $\Delta$ by $\gamma_1$ and $\gamma_2$. For some $\beta$ and $\delta$ in $T$ we see that $\alpha$ has the following sequences of `zig-zag' intersections: $\gamma_1, \gamma_2, \beta$ and $\delta, \gamma_1, \gamma_2$. All other sequences of intersections (of $\alpha$ with $T$) of length $3$ are `fan' intersections. This completes the proof.
\end{proof}

\begin{figure}[ht]
\caption{The green shaded area denotes the triangle $\Delta$ used in the proof of Lemma \ref{bandgraphform}.}
\label{bandgraphform2}
\begin{center}
\includegraphics[width=8cm]{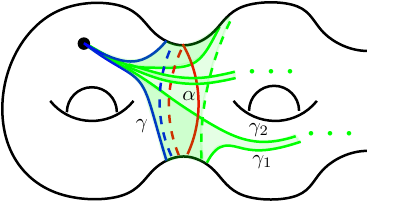}
\end{center}
\end{figure}

\begin{lem} \label{lem:TrivEnd}
For
 $\lam\in\CC^*$, the endomorphism ring 
 $\End_{A(T_\alp)}(M(T_\alp,\alp,\lam,1))$ of the regular quasi-simple 
 band module $M(T_\alp, \alp,\lam,1)$ is isomorphic to $\CC$.  
\end{lem}
\begin{proof}
The band graph of $\alp$ with respect to $T_\alp$ is of 
`zig-zag' type by construction.  
As shown in Lemma~\ref{bandgraphform}, each `zig-zag' band graph gives rise to a band $b$ of the form shown in \eqref{band}.  Moreover, the "support" $\Lam_I$ of
$b$ is a special biserial (in fact hereditary) algebra. Thus we have trivially
$\End_{A(T_\alpha)}(M(T_\alpha,\alpha,\lambda,1))=
\End_{\Lambda_I}(M(T_\alpha,\alpha,\lambda,1))$
We shall follow  the setup and terminology used by Butler and Ringel in~\cite{butler1987auslander} for representations of band modules. 

Let $M= M(b,\lam, 1)$ be a regular simple 
band module corresponding to the following band $b$: 
\begin{equation} \label{band}
\begin{tikzpicture}
\matrix(m)[matrix of math nodes, row sep=3em, column sep=2em, text height=1.5ex, text depth=0.25ex]{t_{i_n} &t_{i_{n-1}} & t_{i_{n-2}} & \ldots &t_{i_3} & t_{i_2} & t_{i_1} \\};
\path[->](m-1-2) edge node [above]{$\alpha_{n-1}$} (m-1-1);
\path[->](m-1-3) edge node [above]{$\alpha_{n-2}$} (m-1-2);
\path[->](m-1-6) edge node [above]{$\alpha_{2}$} (m-1-5);
\path[->](m-1-7) edge node [above]{$\alpha_{1}$} (m-1-6);
\path[->](m-1-7) edge [bend right=-20] node [below]{$\alpha_{n}$} (m-1-1);
\end{tikzpicture}
\end{equation}
Here, we may think of vertices $t_{i_j}\in Q_0(T_\alp)$ as the arcs of the triangulation  $T_\alp$ which intersect
consecutively with the loop $\alp$.  Note that possibly $t_{i_j}=t_{i_k}$ for
certain $i\neq k$.  However, $t_{i_1}\neq t_{i_n}$ since the arrow $\alp_n$ is not
a loop.  
Specifically, $M$ is defined by the following collection of vector spaces and maps:
\begin{center}
\begin{tikzpicture}
\matrix(m)[matrix of math nodes, row sep=3em, column sep=2em, text height=1ex, text depth=0.25ex]{\mathbb{C} &\mathbb{C} & \mathbb{C} & \ldots &\mathbb{C} & \mathbb{C} &\mathbb{C} \\};
\path[->](m-1-2) edge node [above]{$I$} (m-1-1);
\path[->](m-1-3) edge node [above]{$I$} (m-1-2);
\path[->](m-1-6) edge node [above]{$I$} (m-1-5);
\path[->](m-1-7) edge node [above]{$I$} (m-1-6);
\path[->](m-1-7) edge [bend right=-15] node [below]{$\lam^{-1}$} (m-1-1);
\end{tikzpicture}
\end{center}
Now,  we can use Krause's formalism from~\cite{krause1991maps}.  With the notation
from \emph{loc.~cit.}~p.~191 we denote by $\mathcal{A}(b,b)$ the set of isomorphism classes of admissable (\emph{sic}) triples for the pair of bands  $(b,b)$.  
Since the band $b$ has a unique source, which is mapped to $t_{i_1}$ and a unique sink, which is mapped to $t_{i_n}\neq t_{i_1}$ we have
$\mathcal{A}(b,b)=\{[(b, \operatorname{id}_b,\operatorname{id}_b)]\}$.  
Thus, $\End_{\Lam_I}(M)\cong\CC$ follows directly
from the Theorem on~\cite[p.~191]{krause1991maps}.
\end{proof}

By Lemma~\ref{lem:TrivEnd} we have
$\End_{A(T_\alpha)}(M(T_\alpha,\alpha,\lambda,1))=\CC$. 
On the other hand, 
one can directly check that $\dimv(M(T_\alpha,\alpha,\lambda,1))\cdot\bg_{A(T_\alpha)}(M(T_\alpha,\alpha,\lambda,1))=0$. Hence
\begin{align*}
E_{A(T_\alpha)}(M(T_\alpha,\alpha,\lambda,1)) &= \dim(\End_{A(T_\alpha)}(M(T_\alpha,\alpha,\lambda,1)))+\\
&\quad +\dimv(M(T_\alpha,\alpha,\lambda,1))\cdot\bg_{A(T_\alpha)}(M(T_\alpha,\alpha,\lambda,1))=1.
\end{align*}

The equality $F_{M(T_\alpha,\alpha,\lambda,1)} = F_{G(T_{\alpha},\alpha)}$ follows from~\cite[Lemma 11.4 and Remark 11.7]{geiss2022schemes} and~\cite[Theorem 1.2]{haupt2012euler} as in the proof of
\cite[Proposition 11.8]{geiss2022schemes}.
Note however, that due to our convention for $A(T_\alp)$ resp.\ for $\cP_\CC(Q'(T_\alp)\mid_I, S'(T_\alp)_I)$, we can avoid the
use of dual CC-functions.
This finishes the proof of Proposition \ref{prop:existence-ad-hoc-triangulation-for-closed-curve}.
\end{proof}

\begin{rmk}
    In the proof of Proposition \ref{prop:existence-ad-hoc-triangulation-for-closed-curve} we have partially used the explicit definition of the potential $S(T)$: we know that the restriction of $S(T)$ to the set $I$ is a gentle algebra.
\end{rmk}

\begin{cor}\label{coro:generic-values-on-adhoc-irred-comp-coincide-with-MSW}
In the situation of Proposition \ref{prop:existence-ad-hoc-triangulation-for-closed-curve},
\begin{itemize}
\item 
the Zariski closure
\[
    Z_{T_\alpha,\alpha}:=\overline{\bigcup_{\lambda\in \CC^*}\operatorname{GL}_{\mathbf{d}}(\CC)\cdot M(T_\alpha,\alpha,\lambda,1)}
\]    
is a generically $\tau^-$-regular indecomposable irreducible component of the representation space $\operatorname{rep}(A(T_\alpha),\mathbf{d})$, where $\mathbf{d}:=\dimv(M(T_\alpha,\alpha,\lambda,1))$;
\item 
the generic (injective) $g$-vector $\bg_{A(T_\alpha)}(Z_{T_\alpha,\alpha})$ is equal to the snake $g$-vector $\bg_{G(T_\alpha,\alpha)}$;
\item 
the generic value $CC_{A(T_\alpha)}(Z_{T_\alpha,\alpha})$ is equal to Musiker-Schiffler-Williams' expansion $\MSW(G(T_\alpha,\alpha))$.
\end{itemize}
\end{cor}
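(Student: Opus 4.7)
The plan is to derive all three bullet points of the corollary as direct consequences of Proposition~\ref{prop:existence-ad-hoc-triangulation-for-closed-curve}, together with general facts about $\tau^-$-regular components recorded in Section~\ref{sec:laminations-as-components}.

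First I would establish that $Z_{T_\alpha,\alpha}$ is an irreducible component of $\operatorname{rep}(A(T_\alpha),\mathbf{d})$ and that it is generically $\tau^-$-regular. The parameter space $\CC^*$ is irreducible, so the image of $\lambda\mapsto M(T_\alpha,\alpha,\lambda,1)$, together with the $\operatorname{GL}_{\mathbf{d}}$-action, is an irreducible constructible subset and its closure $Z_{T_\alpha,\alpha}$ is irreducible. By Lemma~\ref{lem:TrivEnd} each $M(T_\alpha,\alpha,\lambda,1)$ is a brick, and different values of $\lambda\in\CC^*$ yield pairwise non-isomorphic band modules, so a generic $\operatorname{GL}_{\mathbf{d}}$-orbit in $Z_{T_\alpha,\alpha}$ has codimension exactly $1$. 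Combining this with the equality $E_{A(T_\alpha)}(M(T_\alpha,\alpha,\lambda,1))=1$ from Proposition~\ref{prop:existence-ad-hoc-triangulation-for-closed-curve} and the standard lower bound $E_A(M,M)\geq\operatorname{codim}_{\operatorname{rep}(A,\mathbf{d})}(\mathcal{O}_M)$ used in \cite[Sec.~3 and 5]{cerulli2015caldero-chapoton}, one gets that $Z_{T_\alpha,\alpha}$ cannot sit properly inside a larger irreducible subset of $\operatorname{rep}(A(T_\alpha),\mathbf{d})$, hence is a maximal irreducible closed subset, i.e.\ a component; the equality of $E$ with the generic codimension then gives the $\tau^-$-regularity. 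Since the generic point of $Z_{T_\alpha,\alpha}$ is an indecomposable (in fact a brick), $Z_{T_\alpha,\alpha}$ is indecomposable in the sense of Theorem~\ref{thm:tau-red-comps-form-a-KRS-monoid}(c).

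For the second bullet, by definition $\bg_{A(T_\alpha)}(Z_{T_\alpha,\alpha})$ is the value of $\bg_{A(T_\alpha)}$ on any dense open subset of $Z_{T_\alpha,\alpha}$. Proposition~\ref{prop:existence-ad-hoc-triangulation-for-closed-curve} yields $\bg_{A(T_\alpha)}(M(T_\alpha,\alpha,\lambda,1))=\Sh_{T_\alpha}(\alpha)=\bg_{G(T_\alpha,\alpha)}$ for every $\lambda\in\CC^*$, and the family $\bigcup_{\lambda\in\CC^*}\operatorname{GL}_{\mathbf{d}}\cdot M(T_\alpha,\alpha,\lambda,1)$ is dense in $Z_{T_\alpha,\alpha}$ by construction, so these common values must be the generic one. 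The same density argument applied to the (constructible) function $M\mapsto F_M$, combined with $F_{M(T_\alpha,\alpha,\lambda,1)}=F_{G(T_\alpha,\alpha)}$, yields that the generic value of $F$ on $Z_{T_\alpha,\alpha}$ equals $F_{G(T_\alpha,\alpha)}$.

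For the third bullet, assembling the above gives
\[
CC_{A(T_\alpha)}(Z_{T_\alpha,\alpha})
=\bx^{\bg_{A(T_\alpha)}(Z_{T_\alpha,\alpha})}F_{Z_{T_\alpha,\alpha}}(\hat{y}_1,\ldots,\hat{y}_n)
=\bx^{\bg_{G(T_\alpha,\alpha)}}F_{G(T_\alpha,\alpha)}(\hat{y}_1,\ldots,\hat{y}_n),
\]
and it only remains to identify the right-hand side with $\MSW(G(T_\alpha,\alpha))$ as defined in \eqref{eq:general-MSW-expression-intro}, which is standard; indeed, by the matching interpretation of $x(P)/\operatorname{cross}(T,\alpha)$ as $\bx^{\bg_{G(T,\alpha)}}$ times a monomial in the $\hat{y}_j$'s, so that $\frac{\sum_P w(P)}{\operatorname{cross}(T,\alpha)}$ factors exactly as $\bx^{\bg_{G(T,\alpha)}}F_{G(T,\alpha)}(\hat{y})$, see \cite{musiker2013bases}. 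The step I expect to require the most care is the first one: making rigorous the claim that the closure of the one-parameter family of bricks is a full irreducible component (not a proper subset of a larger one) by invoking the Voigt-type inequality together with $E=1$ and $\operatorname{codim}=1$; all the remaining content is then just a matter of unwinding the definition of generic values of constructible functions on irreducible components.
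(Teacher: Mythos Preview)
Your proposal is correct and follows essentially the same route as the paper's proof. The only cosmetic difference is packaging: where you spell out the Voigt-type dimension count (generic orbit codimension $1$ in $Z_{T_\alpha,\alpha}$ matches $E=1$, so no strictly larger irreducible closed set can contain $Z_{T_\alpha,\alpha}$), the paper instead invokes the tameness of $A(T_\alpha)$ via \cite{geiss2016the} and then cites \cite[Lemma~3.1 and Theorem~3.2]{geiss2022schemes} (and \cite{carroll2015on}) as a black box to conclude that a one-parameter family of pairwise non-isomorphic bricks with $E=1$ closes up to an indecomposable $\tau^-$-regular component. Your explicit argument is exactly what underlies those cited results, so nothing is genuinely different; the remaining two bullets are handled identically in both, by reading off the generic $g$-vector and $F$-polynomial on the dense family of band modules from Proposition~\ref{prop:existence-ad-hoc-triangulation-for-closed-curve}.
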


\begin{proof}
By \cite[Theorem 7.1]{geiss2016the} (see also \cite[Theorem 3.5]{labardini2016on}), the algebra $A(T_\alpha)$ is tame. Furthermore, for $\lambda_1\neq\lambda_2$, the band modules $M(T_\alpha,\alpha,\lambda_1,1)$ and $M(T_\alpha,\alpha,\lambda_2,1)$ are non-isomorphic, and by Proposition \ref{prop:existence-ad-hoc-triangulation-for-closed-curve}, for every point $M$ in the dense open subset $\bigcup_{\lambda\in\mathbb{C}\setminus\{0\}}\operatorname{GL}_{\mathbf{d}}(\mathbb{C})\cdot M(T_\alpha,\alpha,\lambda,1)$ we have $E_{A(T_\alpha)}(M)=1$. Hence
$Z_{T_\alpha,\alpha}$ is a generically $\tau^-$-regular indecomposable irreducible component by \cite[Lemma 3.1 and Theorem 3.2]{geiss2022schemes} (see also \cite[Section~2.2]{carroll2015on}).

By Proposition \ref{prop:existence-ad-hoc-triangulation-for-closed-curve}, for every point $M\in \bigcup_{\lambda\in\mathbb{C}\setminus\{0\}}\operatorname{GL}_{\mathbf{d}}(\mathbb{C})\cdot M(T_\alpha,\alpha,\lambda,1)$ we have
$$
F_{M} = F_{G(T_\alpha,\alpha)} \qquad \text{and} \qquad
     \bg_{A(T_\alpha)}(M) = \bg_{G(T_\alpha,\alpha)}.
$$
This implies the second and third assertions.
\end{proof}


\section{The Combinatorial Key Lemma}
\label{CKL}
\subsection{Derksen--Weyman--Zelevinsky's representation-theoretic Key Lemma}
The following result is the Key Lemma of Derksen, Weyman and Zelevinsky. 

\begin{thm}[Lemma 5.2, \cite{derksen2010quivers}]
\label{keylemma}
Let $\mathcal{M}$ be a finite-dimensional decorated representation of a non-degenerate quiver with potential $(Q,S)$, and define $\overline{\mathcal{M}}$ to be the mutation of $\mathcal{M}$ in direction $k \in \{1,\ldots, n\}$. Then 

\begin{equation}
\label{Fmutation}
(y_k +1)^{h_k}F_{\mathcal{M}}(y_1,\ldots, y_n) = (y_k' +1)^{h_k'}F_{\overline{\mathcal{M}}}(y_1',\ldots, y_n'),
\end{equation}

where 
\begin{itemize}
\item $(\mathbf{y}',B') \in \mathbb{Q}(y_1,\ldots, y_n)$ is obtained from $(\mathbf{y},B_{\operatorname{DWZ}}(Q))$ by $Y$-seed mutation at $k$,
\item $h_k$ and $h_k'$ are the $k^{th}$ components of the $\mathbf{h}$-vectors $\mathbf{h}_{\mathcal{M}}$ and $\mathbf{h}_{\overline{\mathcal{M}}}$, respectively, defined by \cite[Equations (1.8) and (3.2)]{derksen2010quivers}.
\end{itemize}
Moreover, the $\mathbf{g}$-vector $\mathbf{g}_{\jacobalg{Q,S}}^{\operatorname{DWZ}}(\mathcal{M}) = (g_1,\ldots, g_n)$ defined by \cite[Equation (1.13)]{derksen2010quivers} satisfies 
\begin{equation}
\label{kth gvector}
g_k = h_k - h_k'
\end{equation} 
and is related to the $\mathbf{g}$-vector $\mathbf{g}_{\jacobalg{\mu_k(Q,S)}}^{\operatorname{DWZ}}(\overline{\mathcal{M}}) = (g_1',\ldots, g_n')$  via 
\begin{equation}
\label{gvector mutation}
g_j'  := \left\{
\begin{array}{ll}
        -g_k, &\text{if $j =k$}; \\
        \\
        g_j + [b_{jk}]_{+}g_k - b_{jk}h_k, &\text{if $j \neq k$}.\\
\end{array} 
\right.
\end{equation}
\end{thm}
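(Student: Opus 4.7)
The plan is to follow Derksen--Weyman--Zelevinsky's original strategy: first establish the polynomial identity~(\ref{Fmutation}) by a careful analysis of the quiver Grassmannians of $\mathcal{M}$ and $\overline{\mathcal{M}}=\mu_k(\mathcal{M})$, and then deduce the $\mathbf{g}$-vector identities by tropicalisation of that identity.

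\textbf{Explicit mutation.} First I would spell out the construction of $\overline{\mathcal{M}}$. At the vertex $k$ the representation $\mathcal{M}$ gives the triangle
\[
    M_{\operatorname{in}}(k)\xrightarrow{\alpha_k} M(k)\xrightarrow{\beta_k} M_{\operatorname{out}}(k)\xrightarrow{\gamma_k} M_{\operatorname{in}}(k),
\]
with $\beta_k\alpha_k=0$ forced by the Jacobian relations (and after the reduction step of~\cite{derksen2008quivers} one may also take $\gamma_k\beta_k=0$ and $\alpha_k\gamma_k=0$). The mutated space at $k$ is then built from the cohomology of this triangle, roughly as the direct sum of $\ker\gamma_k/\operatorname{im}\beta_k$, $\operatorname{im}\gamma_k$, a piece absorbed from the decoration $V(k)$, and a correction coming from $M(k)$ modulo $\operatorname{im}\alpha_k+\ker\beta_k$. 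The arrows at $k$ are reversed and composite arrows $\beta_k\alpha_k$ are added between the neighbours of $k$, yielding the new decorated representation $\overline{\mathcal{M}}$ of $\mu_k(Q,S)$.

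\textbf{Polynomial identity.} Writing $F_{\mathcal{M}}(\mathbf{y})=\sum_{\mathbf{e}}\chi(\Gr_{\mathbf{e}}(\mathcal{M}))\,\mathbf{y}^{\mathbf{e}}$, the core step is to stratify $\Gr_{\mathbf{e}}(\mathcal{M})$ by the pair of ranks
\[
    r(N):=\dim(N(k)\cap\operatorname{im}\alpha_k),\qquad s(N):=\dim(N(k)+\ker\beta_k)-\dim\ker\beta_k,
\]
attached to a subrepresentation $N\subset M$, and to stratify $\Gr_{\mathbf{e}'}(\overline{\mathcal{M}})$ analogously on the $\overline{M}(k)$ side. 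On each stratum the data at vertices $i\neq k$ is unchanged, and the projections forgetting the flexible part at~$k$ are affine-space bundles whose fibre dimensions encode exactly the $Y$-seed mutation substitution $y_k\mapsto (y_k')^{-1}$ and $y_j\mapsto y_j\,y_k^{[b_{jk}]_+}(y_k+1)^{-b_{jk}}$ for $j\neq k$. Summing the resulting Euler-characteristic identities over all strata and clearing the poles at $y_k=-1$ by multiplying through by $(y_k+1)^{h_k}$ yields~(\ref{Fmutation}); by construction $h_k$ is the order of this pole on the right after the substitution, and $h_k'$ plays the symmetric role.

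\textbf{The $\mathbf{g}$-vector formulas.} The identity $g_k=h_k-h_k'$ of~(\ref{kth gvector}) drops out by tropical specialisation of~(\ref{Fmutation}): set $y_j=u^{[-b_{jk}]_+}$ for $j\neq k$ and $y_k=u^{-1}$; by the very definition of the $\mathbf{h}$-vector in~\cite[(1.8)]{derksen2010quivers} the left side evaluates to $u^{h_k}$, and comparing $u$-degrees on the right gives the claim after using the definition of $\mathbf{g}^{\operatorname{DWZ}}$ in~\cite[(1.13)]{derksen2010quivers}. The full mutation rule~(\ref{gvector mutation}) then follows either by analogous tropicalisations in the remaining directions, or more conceptually by noting that~(\ref{kth gvector}) together with~(\ref{Fmutation}) forces the augmented pair $(-B,\mathbf{g})$ to obey Fomin--Zelevinsky matrix mutation in direction $k$, which unravels exactly to~(\ref{gvector mutation}); compare Remark~\ref{rk:Mut-Comp}(2).

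\textbf{Main obstacle.} The hardest step is clearly the Euler-characteristic comparison in the second paragraph: one must produce an explicit match between strata of $\Gr_{\mathbf{e}}(\mathcal{M})$ and of $\Gr_{\mathbf{e}'}(\overline{\mathcal{M}})$ that is compatible with the $Y$-seed mutation substitution. The natural device is to construct a common base scheme over which both Grassmannians are affine-space bundles, using the cohomology of the triangle at~$k$ to identify strata on the two sides; the matching then reduces to linear algebra at vertex~$k$, but the bookkeeping of the decorated summand $V(k)$ is subtle, since mutation "absorbs" part of $M(k)$ into decoration and changes its dimension, in a way that only the combined identity~(\ref{Fmutation}) (and not its individual coefficients) respects.
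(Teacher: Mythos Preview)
The paper does not prove this statement at all: Theorem~\ref{keylemma} is quoted verbatim from \cite[Lemma~5.2]{derksen2010quivers} as background, with no argument supplied. So there is nothing in the paper to compare your proposal against.

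That said, your sketch is a faithful outline of how Derksen--Weyman--Zelevinsky actually establish the result in \cite{derksen2010quivers}: the stratification of quiver Grassmannians by ranks at vertex~$k$, the affine-bundle comparison over a common base, and the tropical specialisation to extract the $\mathbf{g}$-vector rules are exactly the ingredients of their proof of Lemma~5.2 and the surrounding Proposition~4.15 and Theorem~1.7. Your identification of the bookkeeping of the decorated piece as the delicate point is also accurate. If you want to include a proof here rather than a citation, you would need to flesh out the stratum-matching lemma carefully (this is several pages in \cite{derksen2010quivers}), but for the purposes of the present paper the citation suffices.
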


\begin{rmk}
By~\cite[Proposition~10.4 and Remark~10.8]{derksen2010quivers}, $\mathbf{g}_{\mathcal{M}}^{\operatorname{DWZ}}=\mathbf{g}_{\mathcal{M}}^{\operatorname{inj}}$
    (see also \eqref{eq:gvec}).
\end{rmk}

\subsection{Statement of the Combinatorial Key Lemma}

\begin{thm}[Combinatorial Key Lemma]
\label{thm: comb key lemma}
Let $T=\{\tau_1,\ldots,\tau_n\}$ be a tagged triangulation of $\bSig$,  $\tau_k$  a tagged arc belonging to $T$, and $\alpha$  a simple closed curve. We set $T'$ to be the tagged triangulation obtained from $T$ by flipping $\tau_k$. The following equality holds:

\begin{equation}
\label{combinatorial Fmutation}
(y_k +1)^{h_k}F_{G(T,\alpha)}(y_1,\ldots, y_n) = (y_k' +1)^{h_k'}F_{G(T',\alpha)}(y_1',\ldots, y_n')
\end{equation}

where \begin{itemize}

\item $(\mathbf{y}',B') \in \mathbb{Q}(y_1,\ldots, y_n)$ is obtained from $(\mathbf{y},B(T))$ by mutation at $k$,

\item the \emph{snake $\mathbf{h}$-vector} $\mathbf{h}_{G(T,\alpha)}$ is defined by setting
\[
u^{h_i} := {F_{G(T,\alpha)}}_{{\vert}_{\operatorname{Trop}(u)}}(u^{[-b_{i1}]_{+}}, \ldots, u^{-1}, \ldots, u^{[-b_{in}]_{+}}), 
\]
where $u^{-1}$ is in the $i^{th}$ component. The snake $\mathbf{h}$-vector $\mathbf{h}_{G(T',\alpha)}$ is defined analogously with respect to $T'$ and $\mu_{k}(B(T))=B(T')$.

\end{itemize}

\noindent Moreover, the snake $\mathbf{g}$-vector $\mathbf{g}_{G(T,\alpha)} = (g_1,\ldots, g_n)$ satisfies 
\begin{equation}
\label{comb kth gvector}
g_k = h_k - h_k'
\end{equation} 
and $\mathbf{g}_{G(T,\alpha)} = (g_1,\ldots, g_n)$ is related to $\mathbf{g}_{G(T',\alpha)} = (g_1',\ldots, g_n')$ by the following rule:
\begin{equation}
\label{comb gvector mutation}
g_j'  := \left\{
\begin{array}{ll}
        -g_k, &\text{if $j =k$}; \\
        \\
        g_j + [b_{jk}]_{+}g_k - b_{jk}h_k, &\text{if $j \neq k$}.\\
\end{array} 
\right.
\end{equation}

\end{thm}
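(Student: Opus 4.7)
The plan is to reduce the combinatorial identities~(\ref{combinatorial Fmutation}), (\ref{comb kth gvector}) and~(\ref{comb gvector mutation}) to Derksen--Weyman--Zelevinsky's representation-theoretic Key Lemma (Theorem~\ref{keylemma}) by constructing, for every tagged triangulation $T$ of $\bSig$, a decorated representation $\mathcal{M}(T,\alpha)$ of the Jacobian algebra $A(T)$ whose representation-theoretic $F$-polynomial and injective $g$-vector match the snake counterparts of the band graph $G(T,\alpha)$. Because the snake $\bh$-vector is defined by exactly the same tropical-evaluation rule applied to $F_{G(T,\alpha)}$ as the DWZ $\bh$-vector is to $F_{\mathcal{M}(T,\alpha)}$ (the inputs $u^{[-b_{ij}]_+}$ being determined by $B(T)=B(Q'(T))$ in both cases), an equality of $F$-polynomials automatically forces an equality of $\bh$-vectors. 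Once such $\mathcal{M}(T,\alpha)$ is in hand, Theorem~\ref{keylemma} applied to the pair $\left(\mathcal{M}(T,\alpha),\,\mu_k(\mathcal{M}(T,\alpha))\right)$ transports the rep-theoretic recursions (\ref{Fmutation})--(\ref{gvector mutation}) into the desired combinatorial ones.

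To build $\mathcal{M}(T,\alpha)$, I would fix a sequence of flips $T_\alpha = T^{(0)} \to T^{(1)} \to \cdots \to T^{(N)} = T$, with $T^{(j)} \to T^{(j+1)}$ the flip at index $k_j$. The base case $T^{(0)} = T_\alpha$ is provided by Proposition~\ref{prop:existence-ad-hoc-triangulation-for-closed-curve} and Corollary~\ref{coro:generic-values-on-adhoc-irred-comp-coincide-with-MSW}: the band module $\mathcal{M}(T_\alpha, \alpha) := M(T_\alpha, \alpha, \lambda, 1)$ satisfies $F_{\mathcal{M}(T_\alpha,\alpha)} = F_{G(T_\alpha,\alpha)}$ and $\bg_{A(T_\alpha)}(\mathcal{M}(T_\alpha,\alpha)) = \bg_{G(T_\alpha,\alpha)}$. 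For the inductive step, set $\mathcal{M}(T^{(j+1)}, \alpha) := \mu_{k_j}(\mathcal{M}(T^{(j)}, \alpha))$ via DWZ mutation of decorated QP-representations~\cite{derksen2008quivers}; Theorem~\ref{keylemma} then yields the rep-theoretic Laurent identity $(y_{k_j}+1)^{h_{k_j}} F_{\mathcal{M}(T^{(j)},\alpha)} = (y_{k_j}'+1)^{h_{k_j}'} F_{\mathcal{M}(T^{(j+1)},\alpha)}$ together with the corresponding $\bg$-transformation.

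The crux and main obstacle is to verify that the inductively produced $\mathcal{M}(T^{(j+1)},\alpha)$ still satisfies the matching $F_{\mathcal{M}(T^{(j+1)},\alpha)} = F_{G(T^{(j+1)},\alpha)}$ and $\bg_{A(T^{(j+1)})}(\mathcal{M}(T^{(j+1)},\alpha)) = \bg_{G(T^{(j+1)},\alpha)}$, so that the induction can proceed. I expect to handle this by a direct combinatorial study of how the band graph $G(T,\alpha)$ changes under flipping $\tau_k$: the alteration is localized to the tiles of $G(T,\alpha)$ labelled by $\tau_k$ and by the arcs bounding the two triangles of $T$ adjacent to $\tau_k$, and the generating function of good matchings transforms according to a universal, local substitution. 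A case-by-case analysis of the possible configurations around $\tau_k$ (boundary quadrilateral, interior fan, plain and notched endings at punctures, and the loop-graph configurations introduced in~\cite{wilson2020surface}) allows one to identify this substitution term-by-term with the $y$-seed mutation rule in~(\ref{Fmutation}); a parallel tracking of the minimal matching $P_-$ yields~(\ref{comb gvector mutation}). Iterating this local matching along the flip path closes the induction, and applying Theorem~\ref{keylemma} one last time to $\mathcal{M}(T,\alpha)$ at an arbitrary target flip $T \to T'$ produces~(\ref{combinatorial Fmutation})--(\ref{comb gvector mutation}) and hence the Combinatorial Key Lemma.
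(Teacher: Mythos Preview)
Your proposal contains a logical circularity. You want to prove the Combinatorial Key Lemma by (a) constructing $\mathcal{M}(T,\alpha)$ via iterated DWZ mutation from the base case $\mathcal{M}(T_\alpha,\alpha)$, (b) showing inductively that $F_{\mathcal{M}(T,\alpha)}=F_{G(T,\alpha)}$ and $\bg_{A(T)}(\mathcal{M}(T,\alpha))=\bg_{G(T,\alpha)}$, and (c) reading off the CKL identities from Theorem~\ref{keylemma} applied to $\mathcal{M}(T,\alpha)$. But the inductive step in (b) requires you to already know how $F_{G(T,\alpha)}$ and $\bg_{G(T,\alpha)}$ transform under a flip of $T$ --- and that is exactly what equations~(\ref{combinatorial Fmutation}) and~(\ref{comb gvector mutation}) assert. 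You acknowledge this yourself: your ``crux'' is a direct combinatorial case analysis showing that the good-matching generating function transforms under a flip by the $y$-seed rule, and that tracking $P_-$ yields~(\ref{comb gvector mutation}). Once that combinatorial analysis is done, you have proved~(\ref{combinatorial Fmutation}) and~(\ref{comb gvector mutation}) directly; the representation $\mathcal{M}(T,\alpha)$ and Theorem~\ref{keylemma} then contribute nothing further to the proof of Theorem~\ref{thm: comb key lemma}. Put differently, the scaffolding you erect around the ``crux'' never bears any load.

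This is precisely why the paper organizes things in the opposite order. Theorem~\ref{thm: comb key lemma} is proved as a standalone combinatorial statement about band graphs, with no reference to Jacobian algebras or to Proposition~\ref{prop:existence-ad-hoc-triangulation-for-closed-curve}: one enumerates the finitely many local crossing configurations of $\alpha$ with the flip region (Figure~\ref{flips}), writes down the corresponding sub-snake-graphs before and after the flip (Figure~\ref{SG}), and verifies~(\ref{combinatorial Fmutation}) locally (Figure~\ref{fig: SGmatching}). The genuinely non-obvious step --- which your sketch does not address --- is that these local identities assemble into the global one: this requires proving that the local snake $h$-values sum to the global $h_k$ (Proposition~\ref{local hsum}), which in turn rests on an extendability argument for perfect matchings (Lemma~\ref{negative diagonals}). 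The $g$-vector relations~(\ref{comb kth gvector})--(\ref{comb gvector mutation}) are handled by a parallel local analysis (Lemma~\ref{gvector difference}, Proposition~\ref{comb gvector mutation 2}) together with the identity $h_k=\min(0,g_k)$ (Lemma~\ref{hk min}). Only \emph{afterwards}, in Section~\ref{sec:bangle-functions-are-the-generic-basis}, is the CKL combined with Proposition~\ref{prop:existence-ad-hoc-triangulation-for-closed-curve} and Theorem~\ref{keylemma} to deduce $F_{\mathcal{M}(T,\alpha)}=F_{G(T,\alpha)}$ for all $T$: the implication runs from the CKL to the representation-theoretic matching, not the other way around.
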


\begin{rmk}\label{rem:snake-h-vector-is-non-positive} Since the band graph $G(T,\alpha)$ always admits a minimal matching, the snake $F$-polynomial $F_{G(T,\alpha)}$ has constant term $1$, hence every entry of the snake $h$-vector $\mathbf{h}_{G(T,\alpha)}$ is non-positive.
\end{rmk}

\begin{exmp}
As shown in Figure \ref{exmp: cylinder}, let $T$ and $T'$ be triangulations of the cylinder $C_{1,1}$, where $T'$ is obtained from $T$ by the flip of $\tau_1$. Furthermore, let $\alpha$ be the unique simple closed curve of $C_{1,1}$.

Directly from the definition, let us compute the snake $F$-polynomials $F_{G(T,\alpha)}$ and $F_{G(T',\alpha)}$ of $\alpha$. Namely, considering the good matchings associated to the band graphs $G(T,\alpha)$ and $G(T',\alpha)$ shown in Figure \ref{exmp: cylinder} we get: $$F_{G(T,\alpha)}(y_1,y_2) = 1 + y_2 + y_1y_2 \hspace{10mm} \text{and} \hspace{10mm} F_{G(T',\alpha)}(y_1,y_2) = 1 + y_1 + y_1y_2.$$

Now we wish to calculate the components $h_1$ and $h_1'$ corresponding to $\tau_1$ in the respective $\mathbf{h}$-vectors $\mathbf{h}_{\alpha, T}$ and $\mathbf{h}_{\alpha, T'}$.
Since $b_{12} = -2$ and $b_{12}' = 2$ we get: $${F_{G(T,\alpha)}}_{\vert_{Trop(u)}}(u^{-1},u^2) = 1\oplus u^2\oplus u = 1$$ and $${F_{G(T',\alpha)}}_{\vert_{\operatorname{Trop}(u)}}(u^{-1},1) = 1\oplus u^{-1} \oplus u^{-1} = u^{-1}.$$

Consequently, we have $h_1 = 0$ and $h_1' = -1$. As such, \eqref{combinatorial Fmutation} now reduces to: $$F_{G(T,\alpha)}(y_1,y_2) = (y_1'+1)^{-1}F_{G(T',\alpha)}(y_1',y_2'),$$
which follows directly from the definition of $Y$-seed mutation. Indeed,  $y_1' := y_1^{-1}$ and $y_2' := y_2(1+y_1)^2$.

\end{exmp}

\begin{figure}[H]
\caption{Left: triangulations $T$ and $T'$ of $C_{1,1}$ related by the flip of $\tau_1$. Right: the band graphs $G(T,\alpha)$ and $G(T',\alpha)$.}
\label{exmp: cylinder}
\begin{center}
\includegraphics[width=14cm]{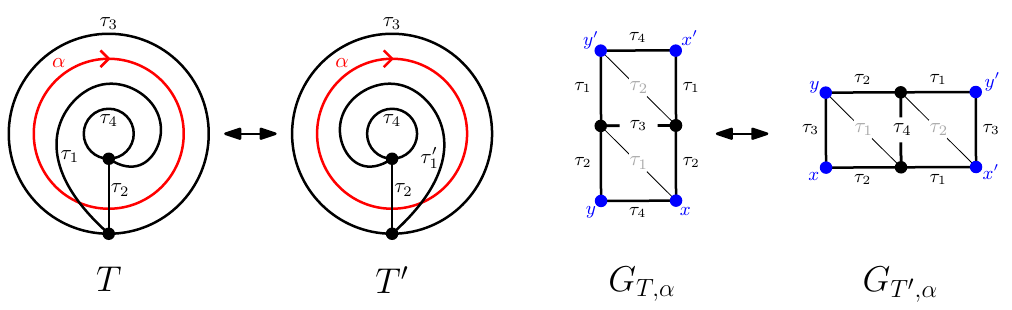}
\end{center}
\end{figure}

The rest of Section is devoted to proving Theorem \ref{thm: comb key lemma}.

\subsection{Local segments and their local flips}\label{subsec:local-segments}
Note that, in general, the band graphs $G(T,\alpha)$ and $G(T',\alpha)$ will have very different shapes, consequently, their collections of good matchings seem to differ considerably. This is due to the fact that $\alpha$ may intersect the flip region multiple times, and in many different combinatorial ways. Nevertheless, we shall show that the relationships between the snake $\mathbf{g}$-vectors, snake $\mathbf{h}$-vectors and snake $F$-polynomials of $G(T,\alpha)$ and $G(T',\alpha)$ are governed by `local' considerations.

Let us (cyclically) label the intersections points between $\alpha$ and $T^{\circ}$ by $p_1, p_2, \ldots, p_d$. By convention, indices of these intersection points will always be taken modulo $d$.

\begin{defn}
Consider a subcurve $\alpha_{ij}$ of $\alpha$ with intersection points $p_i, \ldots, p_j$. We call $\alpha_{ij}$ a \textbf{local segment} of $\alpha$ \textbf{with respect to $\gamma \in T$} if: 
\begin{itemize}
\item 
$\alpha_{ij}$ intersects $\gamma$ or $\gamma'$
\item 
neither $p_{i-1}$ or $p_{j+1}$ lie on $\gamma$ or $\gamma'$
\item 
$\alpha_{ij}$ is minimal with the above two properties (with respect to subcurve inclusion).
\end{itemize}
\end{defn}

We list all possible local segments of $\alpha$, with respect to a tagged arc $\gamma \in T$, in Figure \ref{flips}.

\begin{figure}[H]\caption{The complete list of local segments, considered up to rotations and reflections.}
\label{flips}
\begin{center}
\includegraphics[width=12cm]{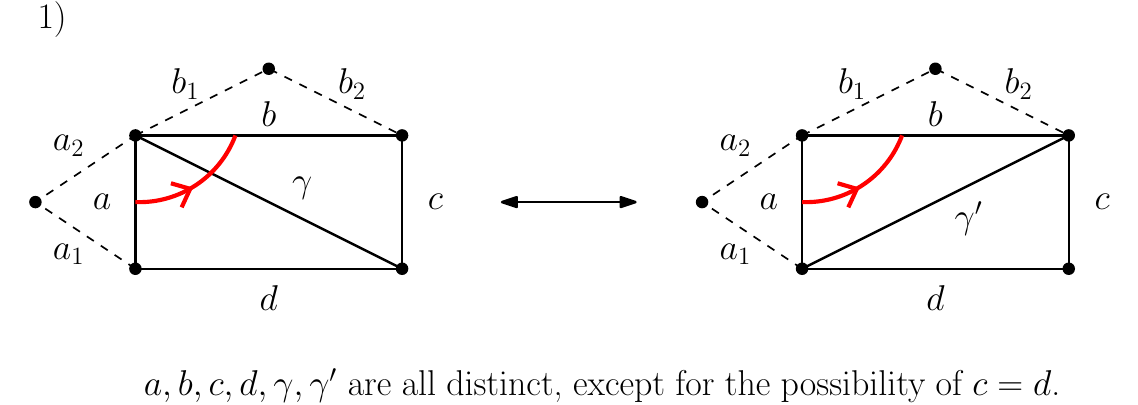}
\end{center}
\end{figure}
\begin{center}
\includegraphics[width=12cm]{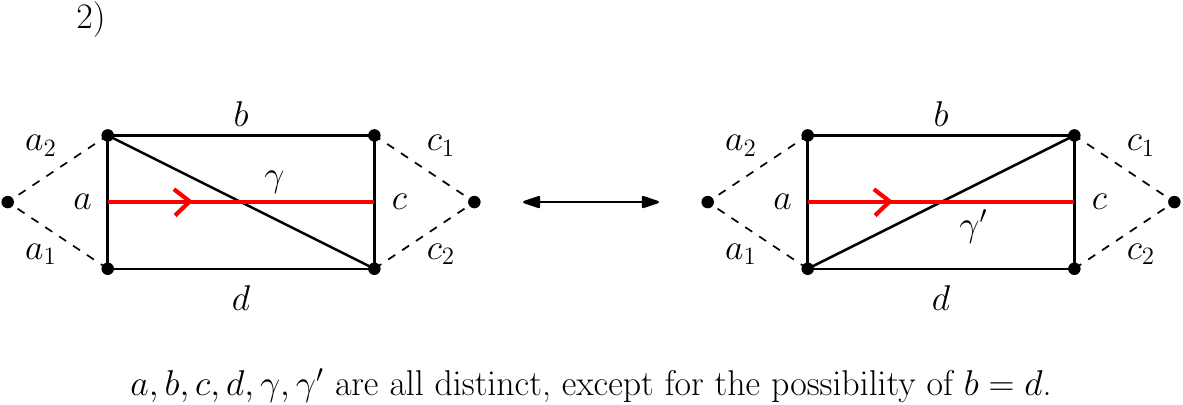}
\includegraphics[width=12cm]{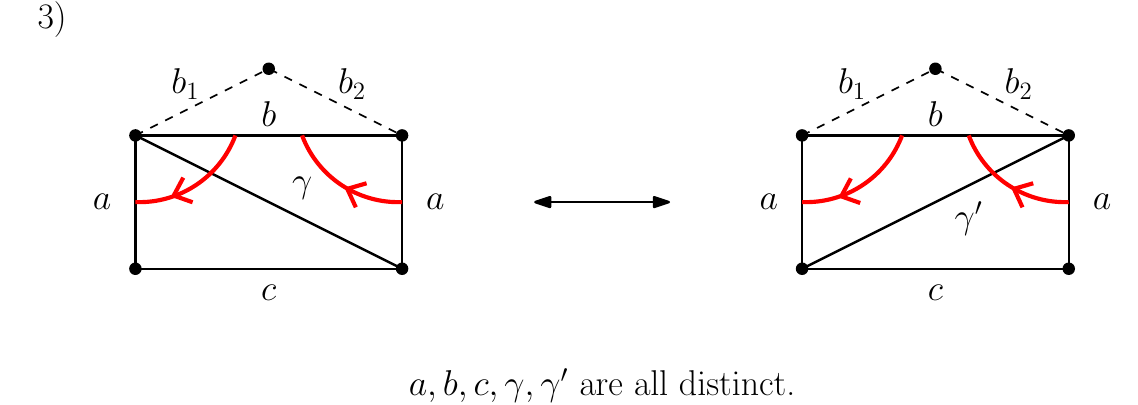}
\includegraphics[width=12cm]{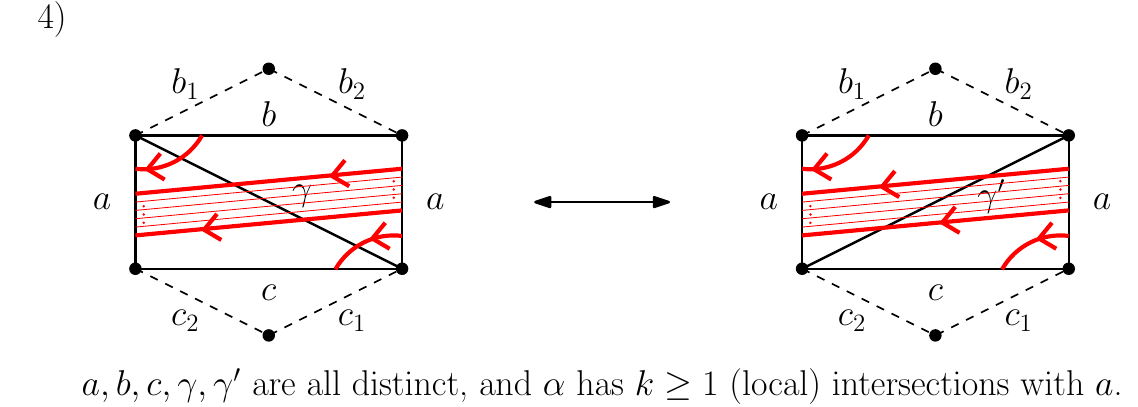}
\includegraphics[width=12cm]{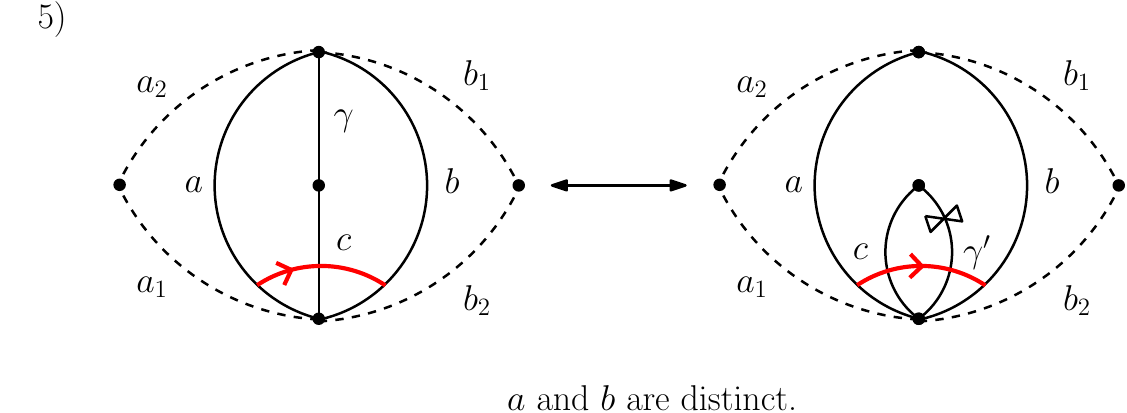}
\includegraphics[width=12cm]{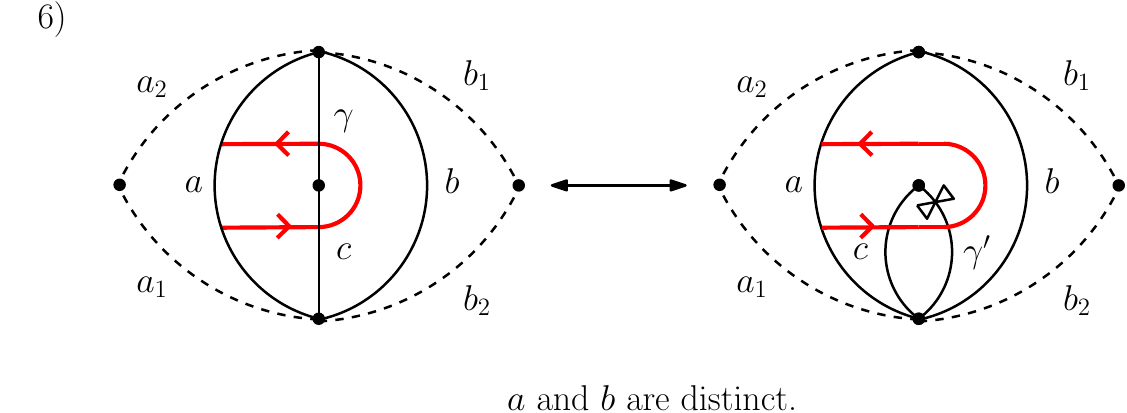}
\includegraphics[width=12cm]{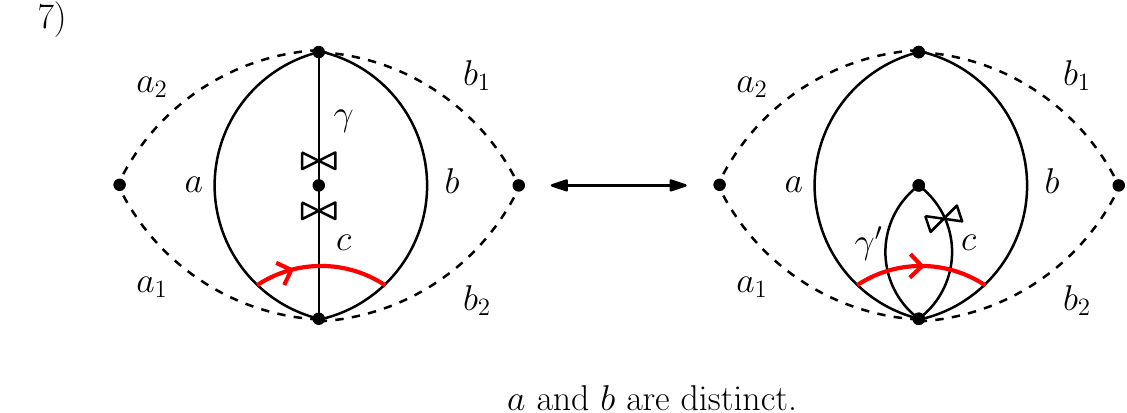}
\includegraphics[width=12cm]{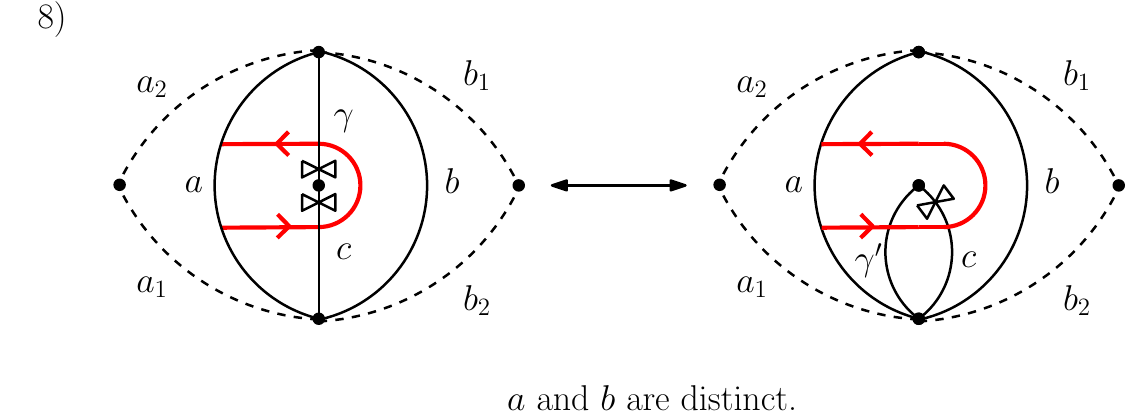}
\end{center}

\begin{rmk}
\label{disjointness} 
Note that in all of the configurations listed in Figure \ref{flips}, we have 
$$
a_1,a_2,b_1,b_2,c_1,c_2 \notin \{\gamma, \gamma'\}.
$$
\end{rmk}

\begin{rmk}

We have not included the case that $\alpha$ is a closed curve enclosed by $T$ in a cylinder $C_{1,1}$, but this is an exceptional case and can easily be checked separately.

\end{rmk}

We now list the snake graphs corresponding to the flips illustrated in Figure \ref{flips}.

\begin{figure}[H]\caption{The list of snake graphs corresponding to Figure \ref{flips}.}
\label{SG}
\begin{center}
\includegraphics[width=17cm]{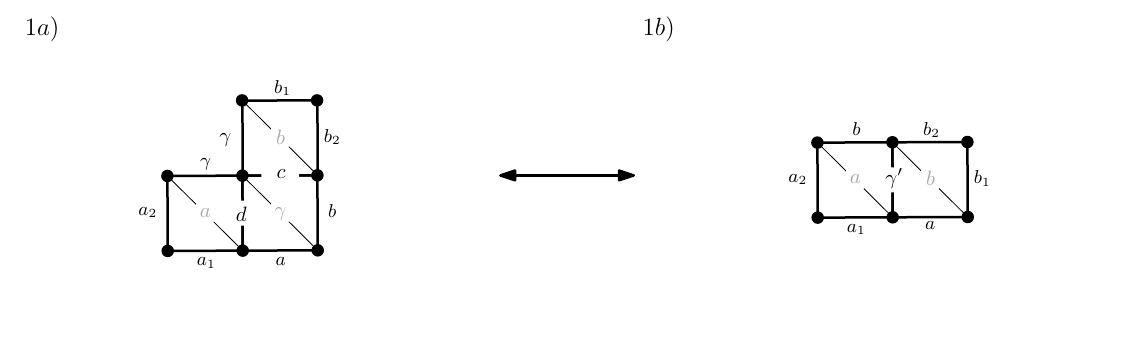}
\end{center}
\end{figure}

\begin{center}
\includegraphics[width=17cm]{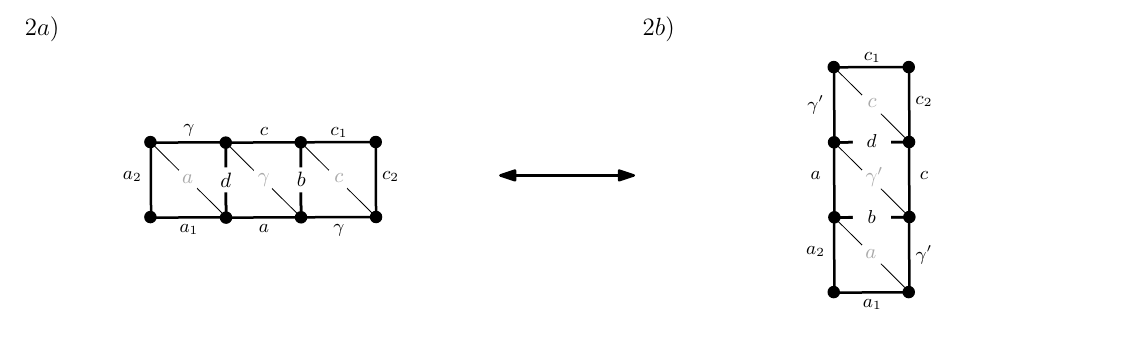}
\includegraphics[width=17cm]{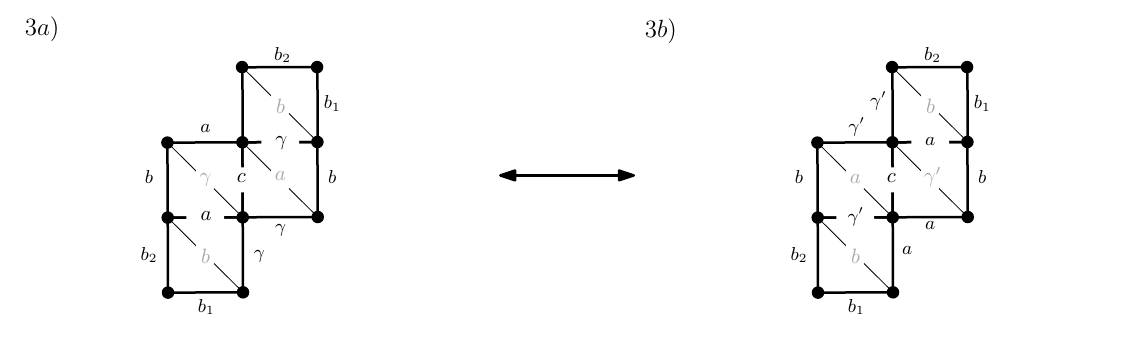}
\includegraphics[width=17cm]{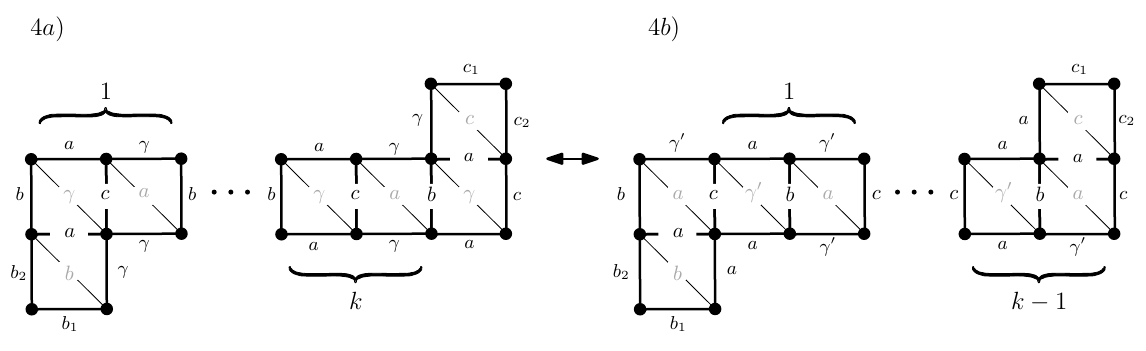}
\includegraphics[width=17cm]{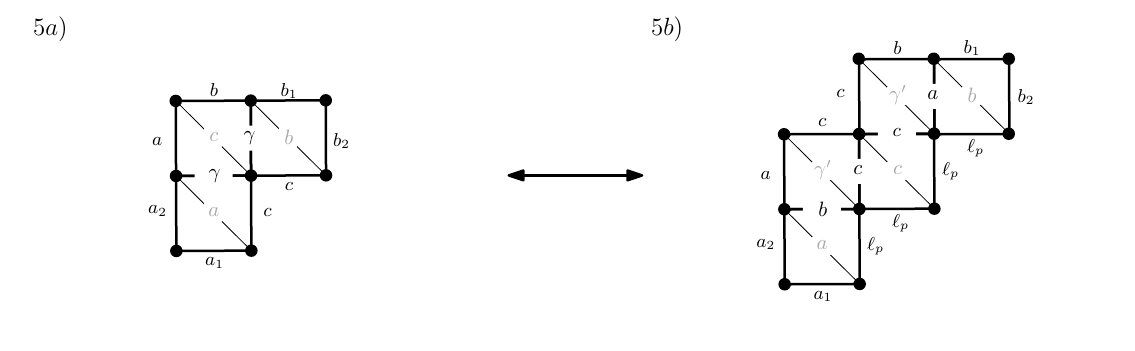}
\includegraphics[width=17cm]{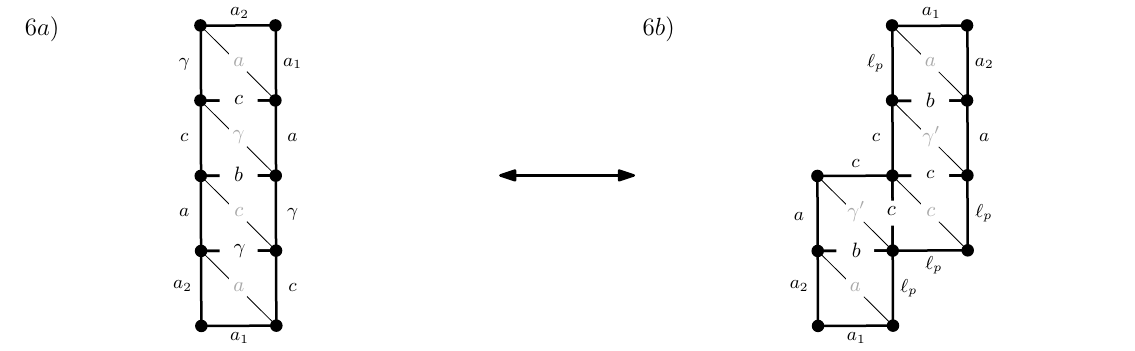}
\includegraphics[width=17cm]{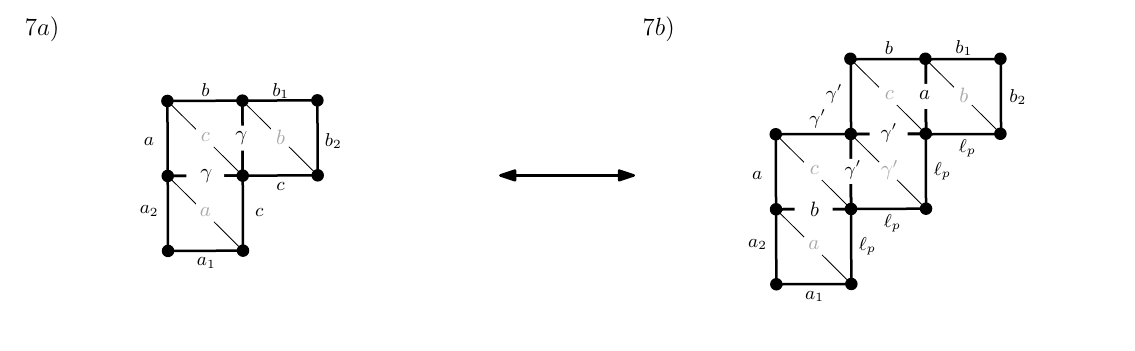}
\includegraphics[width=17cm]{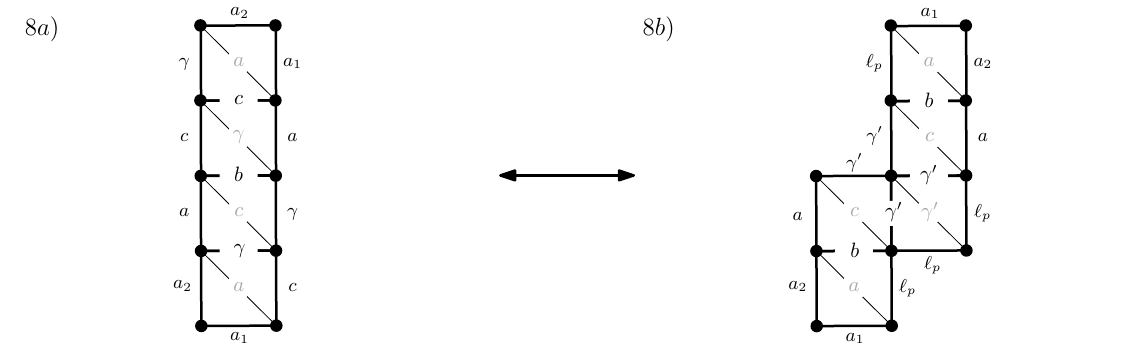}
\end{center}

As mentioned above, the strategy in proving Theorem \ref{thm: comb key lemma} is to reduce the problem to checking that the theorem holds on the flips listed in Figure \ref{flips}. To do this, we shall need a better understanding of the associated $\mathbf{h}$-vectors.

\subsection{Local \texorpdfstring{$\mathbf{h}$}{h}-vectors}
\begin{defn}
Let $\alpha_{ij}$ be a subcurve of $\alpha$ present in Figure \ref{flips}. One can define a snake graph corresponding to the configuration, and thus obtain a \emph{local} snake $F$-polynomial $F_{G(T,\alpha_{ij})}$. We call the $\mathbf{h}$-vector $\mathbf{h}_{G(T,\alpha_{ij})}$ defined through $F_{G(T,\alpha_{ij})}$ the \emph{local snake $\mathbf{h}$-vector} of $\alpha$ with respect to $\alpha_{ij}$ and $T$.
\end{defn}

The idea is to show that local snake $\mathbf{h}$-vectors naturally determine the whole snake $\mathbf{h}$-vector.

\begin{prop}
\label{local hsum}
Let $\mathcal{L}$ be the set of all local segments of $\alpha$ with respect to $\gamma \in T$. Then for each $k \in \{1,\ldots, n\}$ we have the following equality: $$h_{\alpha; k} = \displaystyle \sum_{\alpha_{ij} \in \mathcal{L}} h_{\alpha_{ij}; k}.$$
\end{prop}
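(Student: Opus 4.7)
The plan is to reduce the identity to a local, tile-by-tile computation by establishing a multiplicative decomposition of the snake $F$-polynomial $F_{G(T,\alpha)}$ along the decomposition of $\alpha$ into its local segments and the intermediate non-local stretches. First I would unpack the definition of the snake $h$-vector tropically: setting $c_j := [-b_{kj}]_+$ for $j\ne k$ and $c_k := -1$, one has
\[
h_{\alpha;k} \;=\; \min_{P \in \mathcal{P}(G(T,\alpha))} \; \sum_{j=1}^n c_j\, m_j(P),
\]
where $m_j(P)$ is the exponent of $y_j$ in the monomial $y(P)$. Since $m_j(P)$ counts the $\tau_j$-labelled tiles of $G(T,\alpha)$ at which $P$ differs from the minimal matching, the objective is additive over tiles.

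Next I would analyse the combinatorics of good matchings after decomposing the cyclic sequence of crossings $p_1,\dots,p_d$ into alternating blocks: local segments $\alpha_{ij} \in \mathcal{L}$ and non-local stretches $\beta$ on which $\alpha$ avoids $\gamma$ and $\gamma'$. Correspondingly, the band graph $G(T,\alpha)$ breaks into a chain of sub-snake-graphs glued along interface tiles whose labels, by Remark~\ref{disjointness}, lie in $T \setminus \{\gamma, \gamma'\}$. The core step is to establish a $y$-weight preserving bijection between $\mathcal{P}(G(T,\alpha))$ and the set of tuples of compatible matchings of the pieces, yielding a factorisation
\[
F_{G(T,\alpha)} \;=\; \prod_{\alpha_{ij}\in\mathcal{L}} F_{G(T,\alpha_{ij})} \;\cdot\; \prod_{\beta} F_\beta,
\]
and hence, upon tropicalising,
\[
h_{\alpha;k} \;=\; \sum_{\alpha_{ij}\in\mathcal{L}} h_{\alpha_{ij};k} \;+\; \sum_\beta h_{\beta;k}.
\]

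Finally I would verify that each non-local piece satisfies $h_{\beta;k}=0$ for every $k$. For $k$ such that $\tau_k$ labels no tile of $\beta$, this is immediate: every coefficient in the tropical objective is then non-negative, and the minimum $0$ is attained by the minimal matching of $\beta$. The remaining case, where $\tau_k$ appears among the labels of $\beta$, is the most delicate point: it requires showing that the combinatorics of matchings of non-local stretches, together with the sign constraints on the relevant entries $b_{kj}$ arising from how $\tau_k$ sits relative to the neighbouring arcs in $\beta$, force the tropical cost to remain non-negative. I expect the proof to reduce to a finite case check organised around the eight configurations of Figures~\ref{flips}--\ref{SG} and leveraging Remark~\ref{disjointness} to control the interface tiles. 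The main obstacle is precisely this vanishing on non-local pieces together with the construction of the weight-preserving bijection; the establishment of the bijection is where the careful use of the minimal-matching parametrisation of good matchings of band graphs, and the fact that interface tiles carry labels disjoint from $\{\gamma,\gamma'\}$, will be essential.
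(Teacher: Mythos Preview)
Your approach has a genuine gap: the multiplicative factorisation
\[
F_{G(T,\alpha)} \;=\; \prod_{\alpha_{ij}\in\mathcal{L}} F_{G(T,\alpha_{ij})} \;\cdot\; \prod_{\beta} F_\beta
\]
does not hold. Good matchings of a band graph do not decompose as independent tuples of matchings on sub-snake-graphs; the interface tiles impose genuine compatibility constraints, so the set of good matchings is a \emph{proper} subset of the product of local matching sets. You acknowledge this by writing ``compatible matchings'', but then immediately assert a product formula for the $F$-polynomial, which would require \emph{all} tuples to be compatible. Already for a two-tile band graph the $F$-polynomial is typically $1+y_a+y_ay_b$, which does not factor. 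So the tropicalisation step, and hence the additive identity for $h$-vectors, cannot be obtained this way.

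The paper proceeds quite differently and avoids any factorisation. It proves two inequalities. The easy one, $\sum_{\alpha_{ij}} h_{\alpha_{ij};k} \le h_{\alpha;k}$ (Lemma~\ref{leq}), uses only that every diagonal $d$ with $b_{d\gamma}\neq 0$ lies in some local piece, together with Remark~\ref{disjointness} and non-positivity of $h$-vectors. For the reverse inequality, the paper \emph{constructs a single global good matching} $P$ of $G(T,\alpha)$ whose restriction to each $G(T,\alpha_{ij})$ realises the local minimum $h_{\alpha_{ij};k}$. This is done by (i) checking, in a finite case analysis over the eight configurations of Figure~\ref{SG}, that each local piece admits a matching $P_i$ achieving its local $h_k$ \emph{and} inducing negative diagonals on its first and last tiles, and (ii) using an extendability lemma (Lemma~\ref{negative diagonals}): whenever two adjacent pieces both have negative boundary diagonals, the minimal matching on the intermediate stretch glues them. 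The non-local stretches never enter the $h$-vector accounting at all; they are simply filled in by minimal matchings during the gluing. Your idea of a case check over Figures~\ref{flips}--\ref{SG} is on target, but it should be aimed at producing such boundary-controlled local matchings, not at a factorisation of $F$-polynomials.
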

To prove Proposition \ref{local hsum}, we first show:
\begin{lem}
\label{leq}
Keeping the terminology used in Proposition \ref{local hsum}, the following inequality holds.
\[
\sum_{\alpha_{ij} \in \mathcal{L}} h_{\alpha_{ij}; k} \leq h_{\alpha; k}.
\]
\end{lem}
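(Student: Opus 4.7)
The plan is to interpret both sides as tropical minima over perfect matchings and then reduce to a local comparison. Unpacking the definition of the snake $h$-vector via tropical evaluation, its $k$-th component admits the combinatorial expression
\[
h_{\alpha;k}\ =\ \min_{P}\Big(-e_k(P)+\sum_{j\neq k}[-b_{kj}]_{+}\,e_j(P)\Big),
\]
where $P$ ranges over the good matchings of the band graph $G(T,\alpha)$ and $e_j(P)$ denotes the exponent of $y_j$ in the monomial $y(P)$. An analogous expression holds for each $h_{\alpha_{ij};k}$ using good matchings of the local snake graph $G(T,\alpha_{ij})$. It therefore suffices to exhibit, for every good matching $P$ of $G(T,\alpha)$, a family of good matchings $P_{ij}$ of the local snake graphs (one for each $\alpha_{ij}\in\mathcal{L}$) with
\[
-e_k(P)+\sum_{j\neq k}[-b_{kj}]_{+}\,e_j(P)\ \geq\ \sum_{\alpha_{ij}\in\mathcal{L}}\Big(-e_k(P_{ij})+\sum_{j\neq k}[-b_{kj}]_{+}\,e_j(P_{ij})\Big),
\]
because the right-hand side is bounded below by $\sum_{\alpha_{ij}\in\mathcal{L}} h_{\alpha_{ij};k}$, and minimizing over $P$ then gives the claim.

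The matchings $P_{ij}$ will be built by inheriting from $P$ the flipped/unflipped status of every interior tile of $G(T,\alpha_{ij})$ (i.e.\ every tile of the local sub-snake-graph that is also a tile of $G(T,\alpha)$), and then completing the matching at the two boundary tiles. Once the flipped statuses on interior tiles are specified, the matching on every shared edge between consecutive interior tiles of $G(T,\alpha_{ij})$ is forced, leaving only the two outermost edges free; a case analysis based on the finite list of local-segment configurations in Figure~\ref{flips} shows that these outermost edges can always be chosen so as to yield a perfect (hence good) matching of $G(T,\alpha_{ij})$.

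With the $P_{ij}$ in hand, the desired inequality reduces to two elementary comparisons of $y$-exponents. By the very definition of local segments with respect to $\gamma=\tau_k$, every intersection of $\alpha$ with $\gamma$---equivalently, every tile of $G(T,\alpha)$ labeled by $\tau_k$---lies in a unique local segment. Since $P_{ij}$ preserves the flipped statuses on interior tiles by construction, one obtains $e_k(P)=\sum_{\alpha_{ij}\in\mathcal{L}} e_k(P_{ij})$. For $j\neq k$, tiles of $G(T,\alpha)$ labeled by $\tau_j$ may or may not lie inside a local segment, but those flipped ones that lie outside all local segments only contribute non-negatively to $e_j(P)$, giving $e_j(P)\geq\sum_{\alpha_{ij}\in\mathcal{L}} e_j(P_{ij})$. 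Combined with the non-negativity of $[-b_{kj}]_{+}$, these two facts yield precisely the displayed inequality above.

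The principal obstacle will be the construction of the $P_{ij}$, namely verifying that every good matching of $G(T,\alpha)$ can indeed be restricted and completed to a good matching of each local snake graph $G(T,\alpha_{ij})$ preserving interior flipped statuses. The decisive structural input is the minimality clause in the definition of a local segment, which forces the boundary tiles of $G(T,\alpha_{ij})$ to not be labeled by $\tau_k$ (since $p_{i-1}$ and $p_{j+1}$ do not lie on $\gamma$ or $\gamma'$); this guarantees that any completion of the boundary matching leaves $e_k$ unaffected, and, as the case analysis over Figure~\ref{flips} will confirm, that a valid completion always exists.
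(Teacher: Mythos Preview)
Your approach is correct and is essentially the argument the paper sketches in two lines; you have simply made the tropical bookkeeping explicit. Two comments will tighten it.

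First, every tile of $G(T,\alpha_{ij})$ is already a tile of the band graph $G(T,\alpha)$: the local segment $\alpha_{ij}$ is a subcurve of $\alpha$, and its snake graph is precisely the contiguous sub-snake-graph on the crossings $p_i,\dots,p_j$. So your ``interior tiles'' are in fact all the tiles of $G(T,\alpha_{ij})$, and no separate boundary completion is needed. The diagonal orientations that $P$ induces on those tiles already determine a perfect matching $P_{ij}$ of $G(T,\alpha_{ij})$ with identical height monomial on that range (perfect matchings of a snake graph correspond to order ideals of its tile poset, and restricting an order ideal to a contiguous interval is again an order ideal). The case analysis you announce is unnecessary, and with this description your two exponent comparisons $e_k(P)=\sum e_k(P_{ij})$ and $e_j(P)\ge\sum e_j(P_{ij})$ are immediate.

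Second, the paper's route is slightly sharper: instead of the inequality $e_j(P)\ge\sum e_j(P_{ij})$, it observes that \emph{every} tile of $G(T,\alpha)$ whose diagonal $d$ satisfies $b_{d\gamma}\neq 0$ lies in some local snake graph. Indeed, such a $d$ is a side of the flip quadrilateral, so on one side of that crossing $\alpha$ is inside the quadrilateral and must meet $\gamma$ or $\gamma'$ before leaving; hence that crossing belongs to a local segment. Since tiles with $d\neq\gamma$ and $b_{d\gamma}=0$ contribute nothing to the tropical linear form, one gets $\ell(P)=\sum_{\alpha_{ij}}\ell(P_{ij})$ exactly (not merely $\ge$), and minimizing over $P$ gives the lemma. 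This is what the paper's appeal to Remarks~\ref{rem:snake-h-vector-is-non-positive} and~\ref{disjointness} is encoding.
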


\begin{proof}
Let $\gamma$ be any arc in $T$. Note that if $d$ is a diagonal of $G(T,\alpha)$ such that $b_{d\gamma} \neq 0$ then that diagonal appears as the diagonal of some tile in $G(T,\alpha_{ij})$. The lemma then follows by Remarks \ref{rem:snake-h-vector-is-non-positive} and \ref{disjointness}.

\end{proof}

By Lemma \ref{leq}, to prove Proposition \ref{local hsum} we need to show that there exists a good matching $P$ of $G(T,\alpha)$ such that the restriction of $P$ to any $G(T,\alpha_{ij})$ of $G(T,\alpha)$ achieves the corresponding local $h_k$. Namely, we shall introduce the idea of \emph{gluing} together perfect matchings. With respect to this notion of gluing, the basic idea is show that for any $G(T,\alpha_{ij})$ and $G(T,\alpha_{lm})$, there exists perfect matchings $P_{ij}$ and $P_{lm}$, respectively, which can be glued together and which achieve their corresponding local $h_k$'s.

\subsection{Extendability of perfect matchings}

\begin{defn}
\label{extend}
Let $\mathcal{H} = [H_1,\ldots, H_s]$ and $\mathcal{K} = [K_1,\ldots, K_t]$ be snake graphs contained in a band graph $\mathcal{G}$, such that $H_i \neq K_j$ for all $i,j$. Let $P_{\mathcal{H}}$ and $P_{\mathcal{K}}$ be perfect matchings of $\mathcal{H}$ and $\mathcal{K}$, respectively. We say that $P_{\mathcal{H}}$ is \emph{extendable} to $P_{\mathcal{K}}$ in $\mathcal{G}$ if there exists a perfect matching $P$ of the union $[H_1, H_2, \ldots, K_{t-1}, K_t]$ such that the orientation induced by $P$ on the diagonals of $\mathcal{H}$ and $\mathcal{K}$ agrees with the orientation induced by $P_{\mathcal{H}}$ and $P_{\mathcal{K}}$, respectively.

\end{defn}

\begin{figure}[H]
\caption{An example and non-example of extendability. With respect to Definition \ref{extend}, the tiles of $\mathcal{H}$ and $\mathcal{K}$ are shaded green and blue, respectively.}
\label{glueability}
\begin{center}
\includegraphics[width=13.5cm]{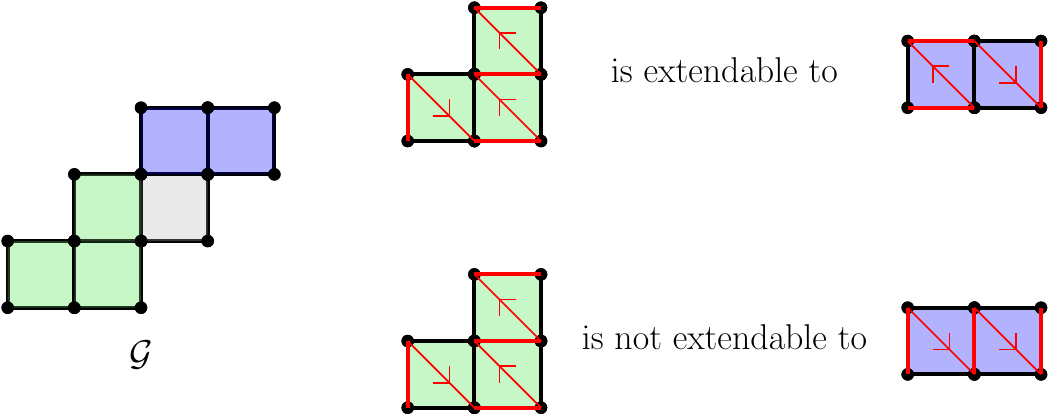}
\end{center}
\end{figure}

\begin{defn}

Let $P$ be a perfect (resp. good) matching of a snake (resp. band) graph~$\mathcal{G}$, and let $T$ be a tile of $\mathcal{G}$. We say that the diagonal of $T$ is \emph{positive} with respect to $P$ if one of the following holds: \begin{itemize}
\item 
$T$ is an odd tile and the diagonal is oriented downwards;
\item 
$T$ is an even tile and the diagonal is oriented upwards.
\end{itemize}
Otherwise we say that the diagonal of $T$ is \emph{negative}.
\end{defn}

\begin{lem}
\label{negative diagonals}
If $P_{\mathcal{H}}$ and $P_{\mathcal{K}}$ induce negative diagonals on the tiles $H_s$ and $K_1$, respectively, then $P_{\mathcal{H}}$ is extendable to $P_{\mathcal{K}}$.

\end{lem}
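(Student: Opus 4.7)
The plan is to give a direct combinatorial construction of the desired perfect matching $P$ of the combined snake graph $[H_1,\ldots,H_s,K_1,\ldots,K_t]$. I will use the description of perfect matchings of a snake graph via ``twists'' of a reference minimal matching: every perfect matching is obtained from the minimal matching by flipping the edges along a pairwise non-overlapping collection of maximal runs of tiles (call these twist intervals), and each twist interval flips the diagonal orientation of every tile it contains.

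First, I would reformulate the hypothesis in terms of twist intervals. The assumption that $P_{\mathcal{H}}$ induces a negative diagonal on the last tile $H_s$ of $\mathcal{H}$ is equivalent to saying that some twist interval in the twist decomposition of $P_{\mathcal{H}}$ ends at $H_s$. Dually, the assumption on $P_{\mathcal{K}}$ means that some twist interval in the twist decomposition of $P_{\mathcal{K}}$ begins at $K_1$. The key observation is that in the combined snake graph one has the freedom to merge these two boundary twist intervals into a single twist interval that straddles the interface.

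I would then construct $P$ by taking the twist intervals of $P_{\mathcal{H}}$ contained in $[H_1,\ldots,H_{s-1}]$, the twist intervals of $P_{\mathcal{K}}$ contained in $[K_2,\ldots,K_t]$, and a single merged twist interval spanning from the leftmost tile of the $H_s$--twist of $P_{\mathcal{H}}$ to the rightmost tile of the $K_1$--twist of $P_{\mathcal{K}}$. These twist intervals are pairwise disjoint in the combined snake graph, so this data determines a well-defined perfect matching $P$. Moreover, on every tile of $\mathcal{H}$ and $\mathcal{K}$, the diagonal orientation induced by $P$ agrees with the one induced by $P_{\mathcal{H}}$ or $P_{\mathcal{K}}$, respectively, since the collection of twist intervals restricts on $\mathcal{H}$ and $\mathcal{K}$ to sets of tiles that differ from the twist decompositions of $P_{\mathcal{H}}$ and $P_{\mathcal{K}}$ only by extensions across the interface -- which, being twists, leave the diagonal orientations inside $\mathcal{H}$ and $\mathcal{K}$ unchanged on the very tiles $H_s$ and $K_1$ (each is flipped in exactly one twist interval, just as before).

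The main obstacle I anticipate is verifying that the merged twist interval is indeed a valid twist of the minimal matching of the combined snake graph, which requires a careful check of the behavior of the reference minimal matching at the shared edge between $H_s$ and $K_1$. I expect this to amount to a routine case-by-case verification depending on whether the snake graph makes a ``straight'' or ``turn'' step at the $H_s$--$K_1$ interface, and on the parity (odd vs.\ even) of the interface tiles, matching the convention used in the definition of positive and negative diagonals.
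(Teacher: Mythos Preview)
Your reformulation of the hypothesis is inverted. In the paper's conventions the minimal matching induces \emph{negative} diagonals on every tile (this is what makes the one-line proof in the paper work, and it is used repeatedly in the case analysis of Proposition~\ref{local hsum}, where ``take $P_i$ to be the minimal matching'' is the standard way to produce negative diagonals at the ends). Hence ``$P_{\mathcal{H}}$ induces a negative diagonal on $H_s$'' means precisely that $H_s$ is \emph{not} contained in any twist interval of $P_{\mathcal{H}}$, not that a twist interval ends at $H_s$. Your ``merge the two boundary twist intervals'' step is therefore built on the wrong hypothesis; under the correct reading there is nothing to merge.

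With the sign corrected, your twist-interval language actually collapses to the paper's argument: since $H_s$ and $K_1$ are untwisted, all twist intervals of $P_{\mathcal{H}}$ lie in $[H_1,\ldots,H_{s-1}]$ and all twist intervals of $P_{\mathcal{K}}$ lie in $[K_2,\ldots,K_t]$, so one simply fills in the region $[H_s,\ldots,K_1]$ with the minimal matching and concatenates. Note also that you implicitly assume $H_s$ and $K_1$ are adjacent (``the shared edge between $H_s$ and $K_1$''), but in the setup there may be intermediate tiles of $\mathcal{G}$ between them; this is exactly why the paper phrases its proof as ``the existence of a minimal matching on $[H_s,\ldots,K_1]$'' rather than just gluing at a single edge.
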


\begin{proof}

This follows from the existence of a minimal matching on $[H_s, \ldots, K_1]$. 
\end{proof}

\begin{proof}[Proof of Proposition \ref{local hsum}]
Lemma \ref{negative diagonals} enables us to prove Proposition \ref{local hsum} with the following strategy. We shall show that for each $G(T,\alpha_i)$ appearing in Figure \ref{SG} there exists a perfect matching $P_i$ such that: 
\begin{itemize}
\item 
$P_i$ achieves the corresponding local $h_k$;
\item 
$P_i$ induces a negative diagonal on the first and last tile of $G(T,\alpha_i)$.
\end{itemize}

\underline{\textbf{Case 1a)}}: Here $b_{\gamma a} = -1$ and $b_{\gamma b} =1$. If the diagonal $\gamma$ is positive then so is the diagonal $a$. Therefore $h_k = 0$. An alternative way to achieve $h_k = 0$ is taking $P_i$ to be the minimal matching. \newline

\underline{\textbf{Case 1b)}}: There is no diagonal labeled by $\gamma'$ so $h_k \geq 0$. Taking $P_i$ to be the minimal matching achieves $h_k = 0$. \newline

\underline{\textbf{Case 2a)}}: Here $b_{\gamma a} = b_{\gamma c} = -1$. If the diagonal $\gamma$ is positive then so are the diagonals $a$ and $c$, hence $h_k = 0$. So take $P_i$ to be the minimal matching. \newline

\underline{\textbf{Case 2b)}}: Here $b_{\gamma' a} = b_{\gamma' c} = 1$. Hence $h_k = -1$ is achieved by taking $P_i$ such that the diagonal $\gamma'$ is positive, and all diagonals are negative. \newline

\underline{\textbf{Case 3a)}}: Here $b_{\gamma a} = -2$ and $b_{\gamma b} = 1$. If the diagonal $\gamma$ is positive then so is $a$, hence $h_k \geq 0$. Taking $P_i$ to be the minimal matching achieves $h_k = 0$. \newline

\underline{\textbf{Case 3b)}}: Here $b_{\gamma' a} = 2$ and $b_{\gamma' b} = -1$. If the diagonal $\gamma'$ is positive then so is the subsequent diagonal $b$, hence $h_k \geq 0$. Taking $P_i$ to be the minimal matching achieves $h_k = 0$. \newline

\underline{\textbf{Case 4a)}}: Here $b_{\gamma a} = -2$ and $b_{\gamma b} = b_{\gamma c} = 1$. If any diagonal $\gamma$ is positive then so is the adjacent diagonal $a$, hence $h_k \geq 0$. Taking $P_i$ to be the minimal matching achieves $h_k = 0$. \newline

\underline{\textbf{Case 4b)}}: Here $b_{\gamma' a} = 2$ and $b_{\gamma' b} = b_{\gamma' c} = -1$. Taking $P_i$ such that diagonals $b$ and $c$ are negative, and all other diagonals are positive shows $h_k \leq 1-k$. Moreover, since there are only $k-1$ diagonals labelled by $\gamma'$ then $h_k = 1-k$. \newline

\underline{\textbf{Case 5a)}}: There is no diagonal labelled $\gamma$ so $h_k = 0$, and we can take $P_i$ to be the minimal matching. \newline

\underline{\textbf{Case 5b)}}: Here $b_{\gamma' b} = -1$ and $b_{\gamma' a} = 1$. If $\gamma'$ is positive then so is $b$, hence $h_k = 0$, and we can take $P_i$ to be the minimal matching. \newline

\underline{\textbf{Case 6a)}}: Here $b_{\gamma a} = -1$. If $\gamma$ is positive then so is the subsequent diagonal $a$, hence $h_k = 0$, and we can take $P_i$ to be the minimal matching. \newline

\underline{\textbf{Case 6b)}}: If both diagonals $\gamma$ are positive, then so is the diagonal $c$. Thus, recalling the definition of the height function, we have that $h_k \geq -1$. Taking $P_i$ such that both diagonals $a$ are negative, and all other diagonals are positive achieves $h_k = -1$. \newline

\underline{\textbf{Case 7a)}}: There is no diagonal labelled $\gamma$ so $h_k = 0$, and we can take $P_i$ to be the minimal matching. \newline

\underline{\textbf{Case 7b)}}: Here $b_{\gamma' b} = -1$ and $b_{\gamma' a} = 1$. If the diagonal $\gamma'$ is positive then so is the diagonal $b$, hence $h_k = 0$ and we can take $P_i$ to be the minimal matching. \newline

\underline{\textbf{Case 8a)}}: Here $b_{\gamma a} = -1$. If the diagonal $\gamma$ is positive then so is the subsequent diagonal $a$, hence $h_k = 0$ and we can take $P_i$ to be the minimal matching. \newline

\underline{\textbf{Case 8b)}}: Here $b_{\gamma' a},b_{\gamma' c} \leq 0$. Moreover, there is precisely one tile with diagonal $\gamma'$, so $h_k = -1$.  Taking $P_i$ such that the diagonal $\gamma'$ and the subsequent diagonal $c$ are positive, and all other diagonals negative achieves $h_k = -1$. \newline

This completes the proof of Proposition \ref{local hsum}.
\end{proof}

\subsection{Proof of equation \texorpdfstring{\eqref{combinatorial Fmutation}}{(5.4)} in the Combinatorial Key Lemma}

\begin{thm}
\label{full comb key lemma}
Equation \eqref{combinatorial Fmutation} in Theorem \ref{thm: comb key lemma} holds true.
\end{thm}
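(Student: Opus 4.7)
The plan is to reduce the global identity \eqref{combinatorial Fmutation} to a finite list of local identities --- one for each of the eight types of local segments enumerated in Figures~\ref{flips} and \ref{SG} --- and then verify each local identity by direct computation on the explicit small snake graphs. The exponent side of the identity is already handled by Proposition~\ref{local hsum}, which gives $h_k = \sum_{\alpha_{ij}\in\mathcal{L}} h_{\alpha_{ij};k}$ (and analogously for $h_k'$); so what remains is to show that the $F$-polynomial side admits a compatible decomposition.

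To carry out the reduction, I would decompose the band graph $G(T,\alpha)$ along its local segments: outside the local segments, $G(T,\alpha)$ consists of ``bridge'' subgraphs whose tiles carry no diagonal labelled $\tau_k$ (by Remark~\ref{disjointness}). Using the extendability framework of Section~\ref{CKL} (Definition~\ref{extend} and Lemma~\ref{negative diagonals}), the good matchings of $G(T,\alpha)$ can be organized as a weighted sum over choices of boundary states between consecutive local pieces and bridges, expressing $F_{G(T,\alpha)}$ as a combination of local generating functions $F_{G(T,\alpha_{ij})}$ and bridge generating functions. Since the bridge pieces only involve variables $y_j$ with $j\neq k$, and since $y$-seed mutation sends each such $y_j$ to $y_j(1+y_k)^{\pm b_{jk}}$ while $y_k \mapsto y_k^{-1}$, the transformation of the bridge contributions under flip is governed solely by a redistribution of $(y_k+1)$-factors. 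Matching these factors with the additive decomposition of $h_k$ and $h_k'$ reduces \eqref{combinatorial Fmutation} to the local identity
\[
(y_k+1)^{h_{\alpha_{ij};k}} F_{G(T,\alpha_{ij})}(y_1,\ldots,y_n) = (y_k'+1)^{h'_{\alpha_{ij};k}} F_{G(T',\alpha_{ij})}(y_1',\ldots,y_n')
\]
for each local segment $\alpha_{ij}$.

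Each local identity is then a finite algebraic check: enumerate the good matchings of the small snake graphs in Figure~\ref{SG} before and after the flip, compute both sides directly, and verify equality. The specific matchings exhibited in the proof of Proposition~\ref{local hsum} already realize each local $h_k$ through an optimal configuration, so the combinatorial data needed to pin down the two sides of each local instance is essentially in hand. The main obstacle will be the reduction step itself: establishing the factorization of $F_{G(T,\alpha)}$ over the decomposition into local segments and bridges, tracking boundary compatibility carefully (possibly via bivariate generating functions indexed by the sign of the boundary diagonals), and verifying that the $(y_k+1)$-factors distribute correctly between the local and bridge contributions under mutation. By contrast, the subsequent case-by-case verification, while tedious, is routine given the explicit finite list.
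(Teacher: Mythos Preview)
Your plan is essentially the paper's own strategy: both reduce the global identity to the finite list of local identities attached to the eight configurations in Figures~\ref{flips} and~\ref{SG}, invoke Proposition~\ref{local hsum} for the additivity of $h_k$ and $h_k'$, and then finish by a direct case-by-case check (which the paper records pictorially as Figure~\ref{fig: SGmatching}).

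There is one genuine gap. You do not address what happens when some of the boundary arcs of the flip region (the dotted arcs $a_i,b_i,c_i$ in Figure~\ref{flips}) are folded sides of self-folded triangles of $T^\circ$. In that situation the height monomial entering $F_{G(T,\alpha)}$ is not the naive tile product but carries the Musiker--Schiffler--Williams correction $\overline{y}(P)$ at precisely those tiles, and your factorization does not close up as written. The paper's proof handles this by enumerating those folded sides as $d_1,\dots,d_m$, splitting the matching sum according to the orientations induced on the corresponding tiles, so that the $\overline{y}/y$ discrepancy becomes a global monomial factor $\prod y_{d_{i_j}}/y_{d_{i_j}^\circ}$ (equation~\eqref{product T}); this factor is then seen to be invariant under $Y$-seed mutation at $k$ because $b_{\gamma,d_i}=b_{\gamma,d_i^\circ}$. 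When no such folded sides occur (in particular whenever $\punct=\varnothing$) your reduction goes through exactly as sketched --- indeed the paper leaves that case to the reader with the remark that ``the snake subgraphs \dots\ do not interfere with each other at all''.

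One small sharpening of your argument: the bridge subgraphs do not merely avoid the variable $y_k$; in fact every tile in a bridge carries a label $j$ with $b_{jk}=0$, since any crossing of an arc adjacent to $\tau_k$ is, by the definition of local segment, absorbed into some local segment. Hence $y_j'=y_j$ on all bridge tiles and the bridge contributions are literally unchanged under the flip --- there is nothing to redistribute. This makes the reduction step cleaner than you suggest.
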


\begin{proof}
Without loss of generality, 
suppose that the signature of $T$ is non-negative, and identify $T$ with the ideal triangulation $T^\circ$ representing it.
Consider all the local segments of $\alpha$ with respect to the flipping arc $\gamma:=\tau_k$, cf. Subsection \ref{subsec:local-segments}. Some of the arcs lying on the region surrounding these local segments, cf. Figure \ref{flips}, may be folded sides of self-folded triangles, even the dotted ones. Suppose first that $n>0$ such arcs are folded sides. We let $d_1, \ldots, d_m$ denote these folded sides, allowing repetition. For convenience, we also order $d_1, \ldots, d_m$ in the order that $\alpha$ intersects them (up to cyclic permutation). For each $j=1,\ldots,m$, we let $d_j^\circ\in T^\circ$ denote the loop that together with $d_j$ forms the corresponding self-folded triangle of $T^\circ$.

The~sum 
\[
\sum_{P \in \mathcal{P}(G(T,\alpha))} \overline{y}(P)
\]
decomposes as the sum of $2^{m}$ collections of perfect matchings -- each of these sums corresponding to a choice of orientation on each of the diagonals $d_1, \ldots, d_m$. Some of these sums may be empty; by convention we define an empty sum to be $0$. Specifically, if we let $(d_{i_1}, \ldots, d_{i_k})$ denote the collection of good matchings which induce positive orientation on $d_{i_1}, \ldots, d_{i_k}$ and negative orientation on the remaining diagonals, then we have the~equality
\[
\sum_{P \in \mathcal{P}(G(T,\alpha))} \overline{y}(P) =  
\sum_{(d_{i_1}, \ldots, d_{i_k})} \Big( \sum_{P \in (d_{i_1}, \ldots, d_{i_k})} \overline{y}(P) \Big).
\]
Moreover for any $(d_{i_1}, \ldots, d_{i_k})$ we have 
\begin{equation}
\label{product T}
\displaystyle \sum_{P \in (d_{i_1}, \ldots, d_{i_k})} \overline{y}(P) = \Big(\prod_{i=1}^{m} \Big(\sum_{P \in (d_{i_1}, \ldots, d_{i_k})_{i,i+1}} y(P)\Big)\Big) \frac{y_{d_{i_1}} \ldots y_{d_{i_k}}}{y_{d_{i_1}^{\circ}} \ldots y_{d_{i_k}^{\circ}}},
\end{equation}
where $(d_{i_1}, \ldots, d_{i_k})_{i,i+1}$ denotes the collection of perfect matchings, between the tiles $d_i$ and $d_{i+1}$, which induce the orientations on $d_i$ and $d_{i+1}$ dictated by $(d_{i_1}, \ldots, d_{i_k})$. By convention we define $(d_{i_1}, \ldots, d_{i_k})_{m,m+1} := (d_{i_1}, \ldots, d_{i_k})_{m,1}$.

Analogously, if we look at $T'$ we get 
\begin{equation}
\label{product T'}
\displaystyle \sum_{P \in (d_{i_1}, \ldots, d_{i_k})'} \overline{y}'(P) = \Big(\prod_{i=1}^{m} \Big(\sum_{P \in (d_{i_1}, \ldots, d_{i_k})'_{i,i+1}} y'(P)\Big)\Big) \frac{y_{d_{i_1}}' \ldots y_{d_{i_k}}'}{y_{d_{i_1}^{\circ}}' \ldots y_{d_{i_k}^{\circ}}'}
\end{equation}
where the collection $(d_{i_1}, \ldots, d_{i_k})'$ is defined in the same way as above, but now with respect to $\mathcal{P}(G(T',\alpha))$, rather than $\mathcal{P}(G(T,\alpha))$.

By Proposition \ref{local hsum} and the explicit computations carried out in Figure \ref{fig: SGmatching}, the proof of the identity 
\[
(y_k+1)^{h_k}\cdot
\sum_{P \in (d_{i_1}, \ldots, d_{i_k})} \overline{y}(P) = (y_k'+1)^{h_k'}\cdot 
\sum_{P \in (d_{i_1}, \ldots, d_{i_k})'} \overline{y}'(P)
\]
is reduced to showing that
$
\frac{y'_{d_{i_1}} \ldots y'_{d_{i_k}}}{y'_{d_{i_1}^{\circ}} \ldots y'_{d_{i_k}^{\circ}}} = \frac{y_{d_{i_1}} \ldots y_{d_{i_k}}}{y_{d_{i_1}^{\circ}} \ldots y_{d_{i_k}^{\circ}}}.
$
This follows from the equality $b_{\gamma \hspace{0.2mm} d_i} = b_{\gamma \hspace{0.2mm} d_i^{\circ}}$.
\begin{figure}[H]\caption{``Local'' verification of equation \eqref{combinatorial Fmutation} in the proof of Theorem \ref{full comb key lemma}.
}
\label{fig: SGmatching}
\begin{center}
\includegraphics[width=15cm]{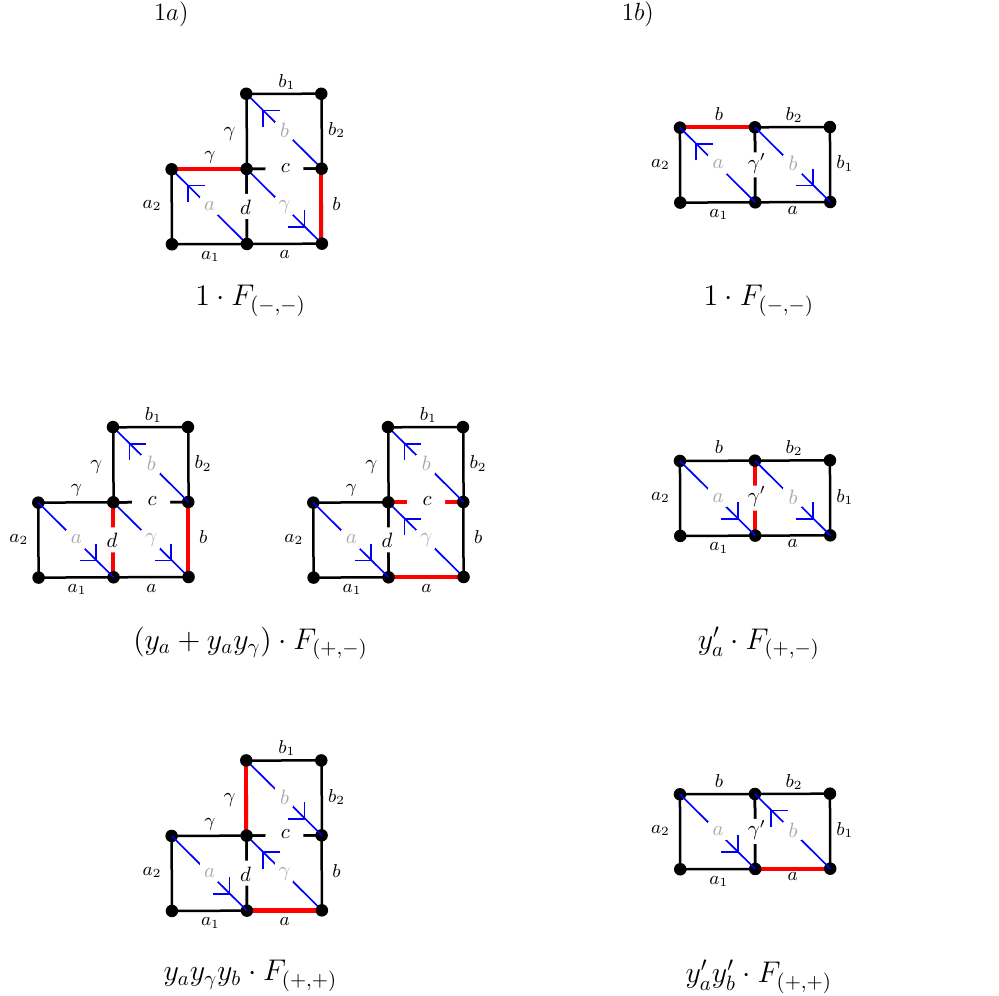}
\end{center}
\end{figure}

\begin{center}
\includegraphics[width=15cm]{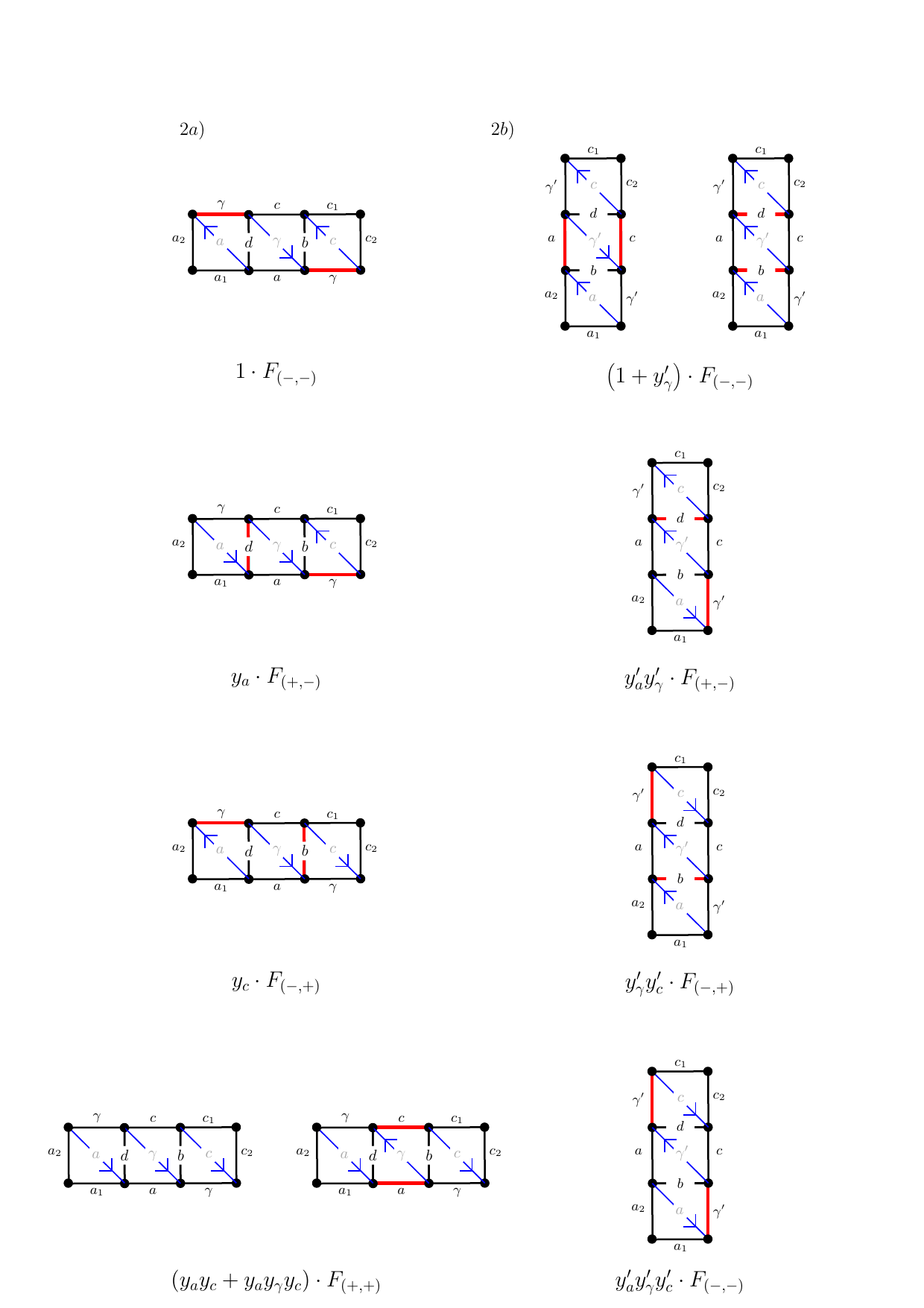} \newpage
\includegraphics[width=15cm]{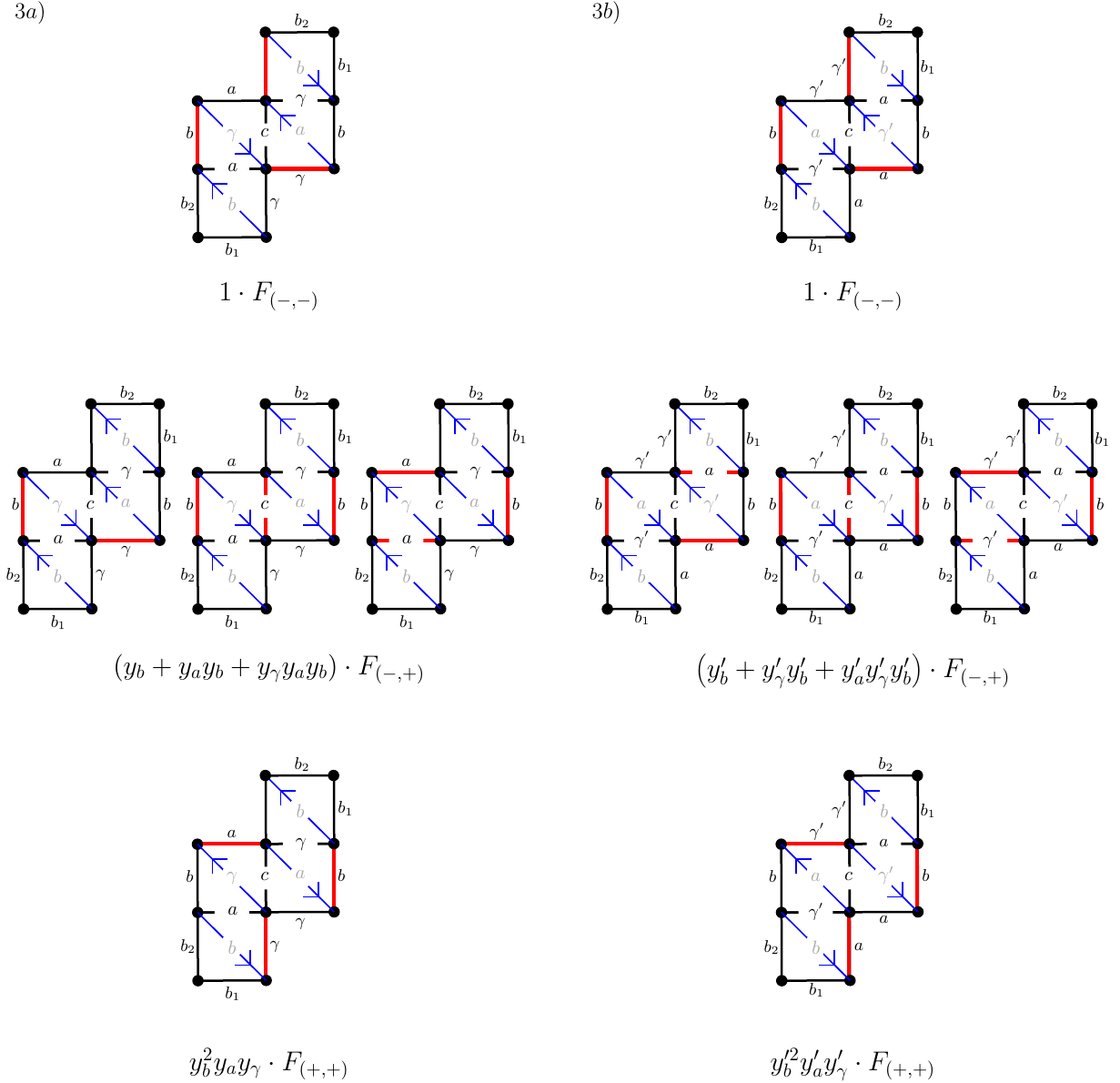} \newpage
\includegraphics[width=15cm]{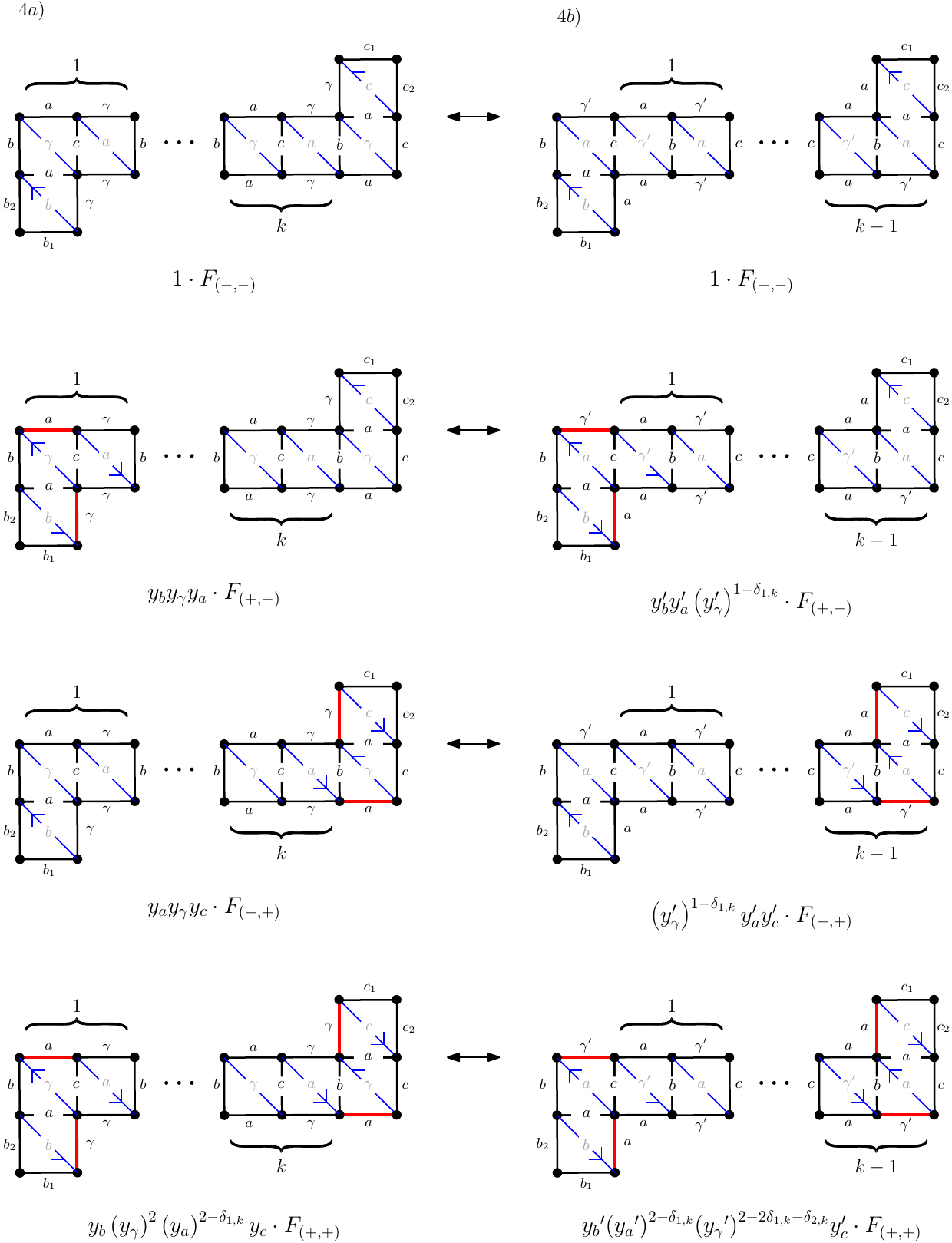} \newpage
\includegraphics[width=15cm]{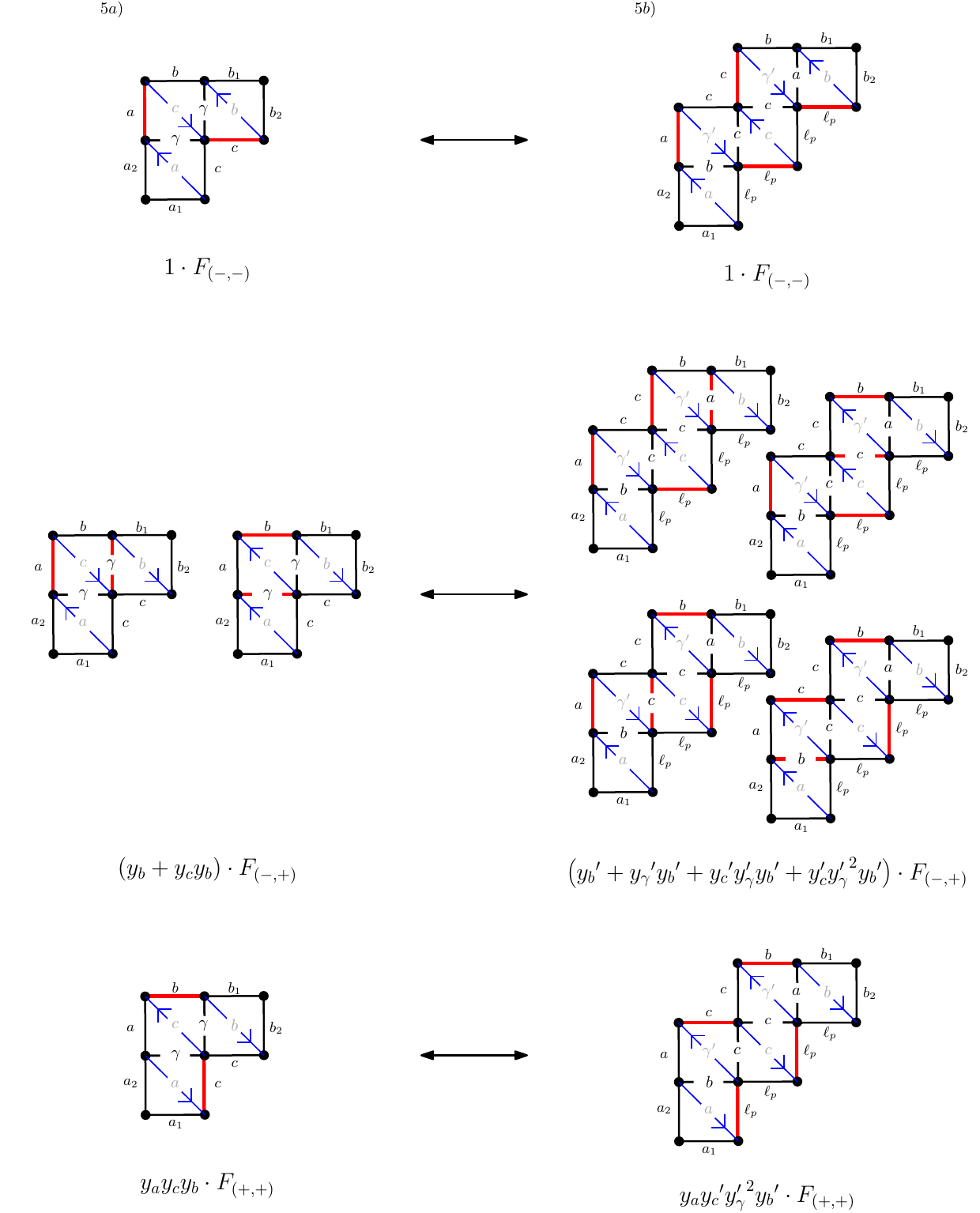} \newpage
\includegraphics[width=15cm]{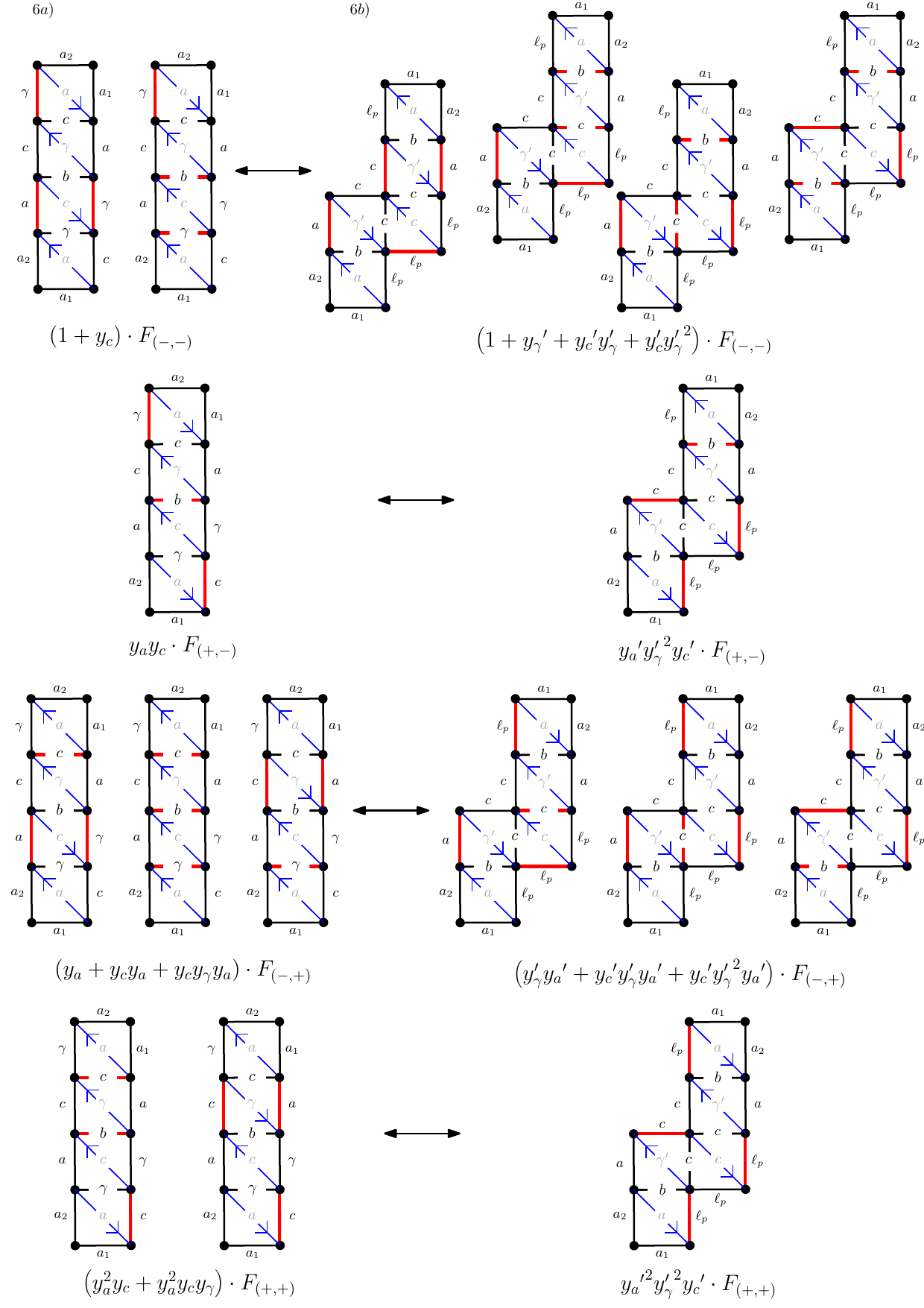} \newpage
\includegraphics[width=15cm]{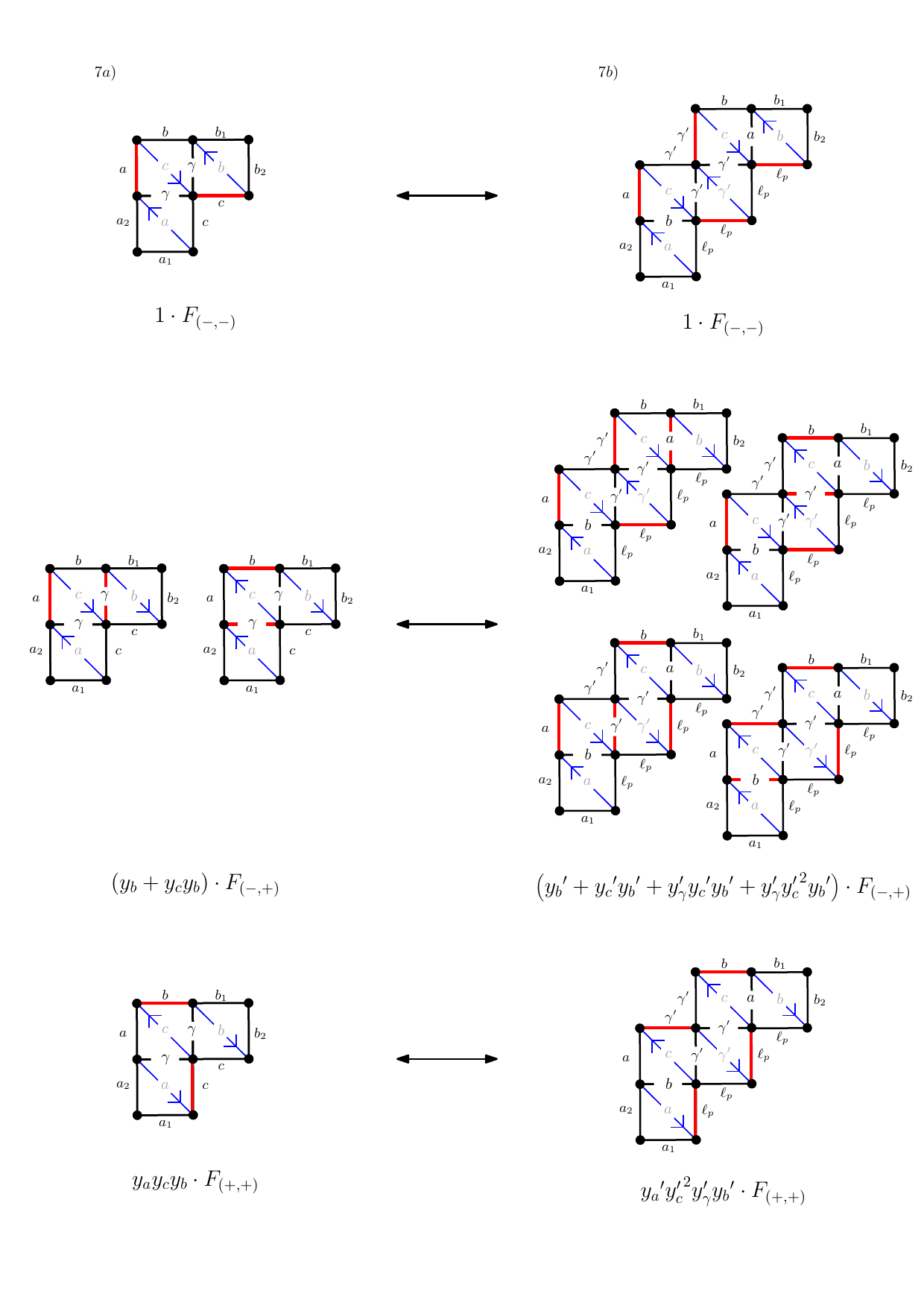} \newpage
\includegraphics[width=14cm]{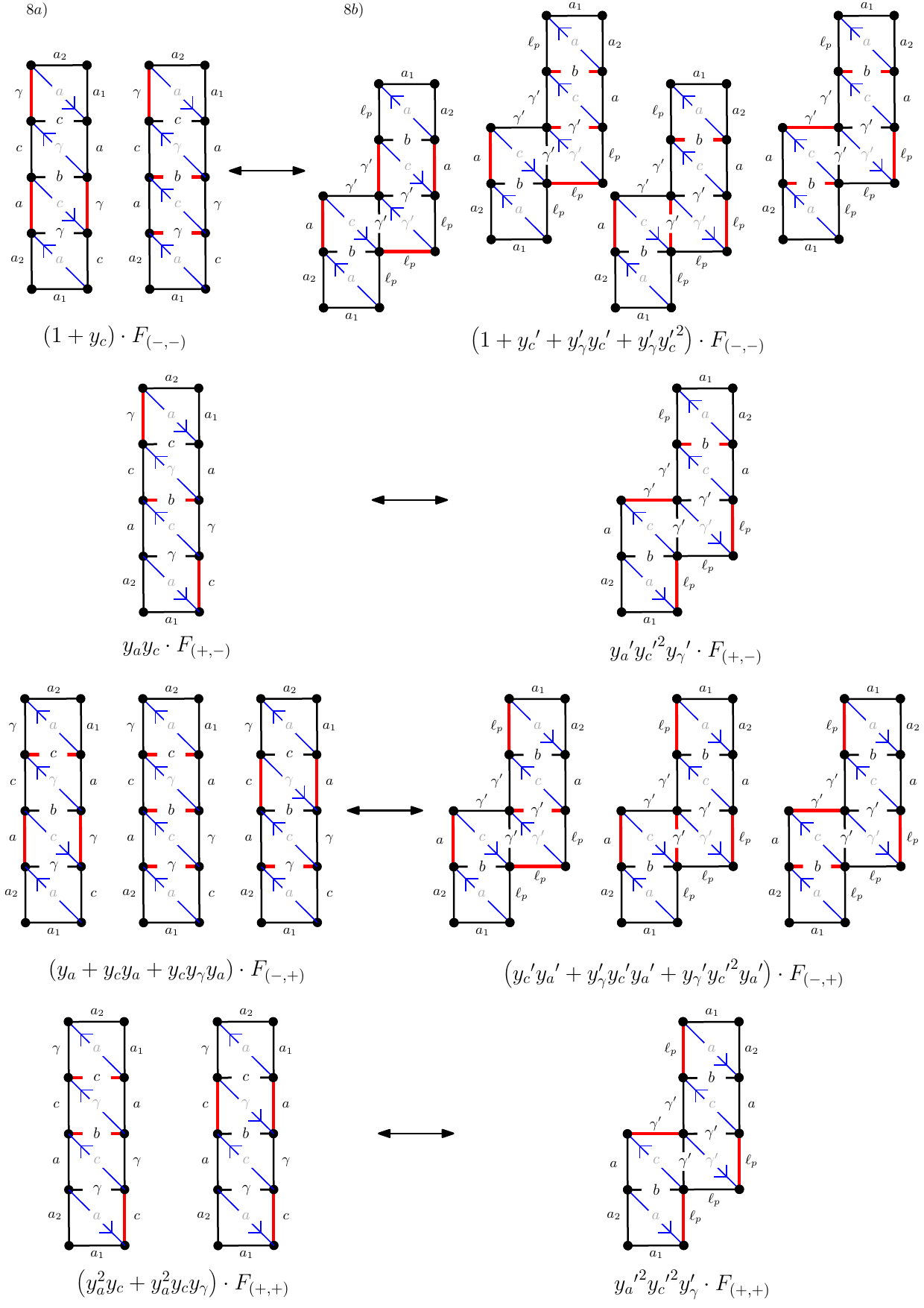}
\end{center}

When none of the boundary edges of the flip regions encountered corresponds to self-folded triangles, the proof of equation \eqref{combinatorial Fmutation}, is easier, because, in that case, the snake subgraphs of the band graphs $G(T,\alpha)$ that arise from the local segment do not interfere with each other at all. The details for that case are left to the reader.
\end{proof}

\subsection{Local \texorpdfstring{$\mathbf{g}$}{g}-vectors}

In this section we show that there are six fundamental configurations that dictate how snake $\textbf{g}$-vectors change under mutation -- these are listed below in Figure~\ref{local gvector flips}.  
\begin{figure}[H]\caption{Here we list all six combinatorial types of local curves for computing snake $\textbf{g}$-vectors, together with their corresponding graphs. The red edges arise from the associated minimal matching.}
\label{local gvector flips}

\end{figure}

\begin{center}
\includegraphics[width=11cm]{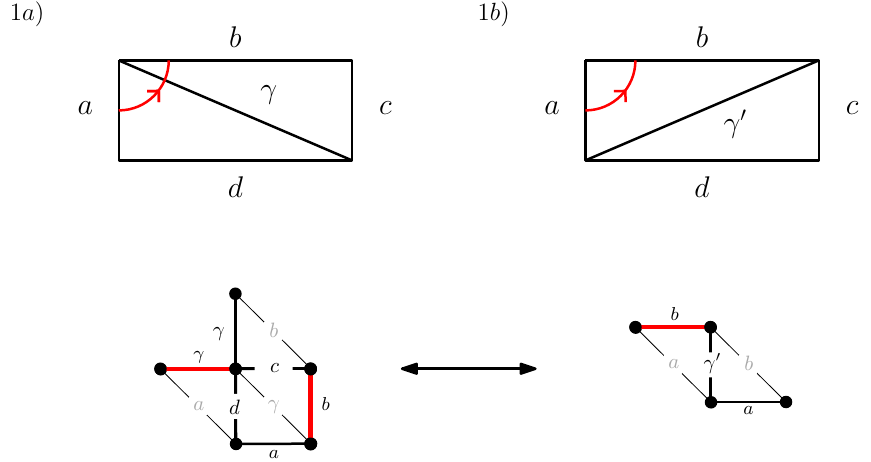}
\includegraphics[width=11cm]{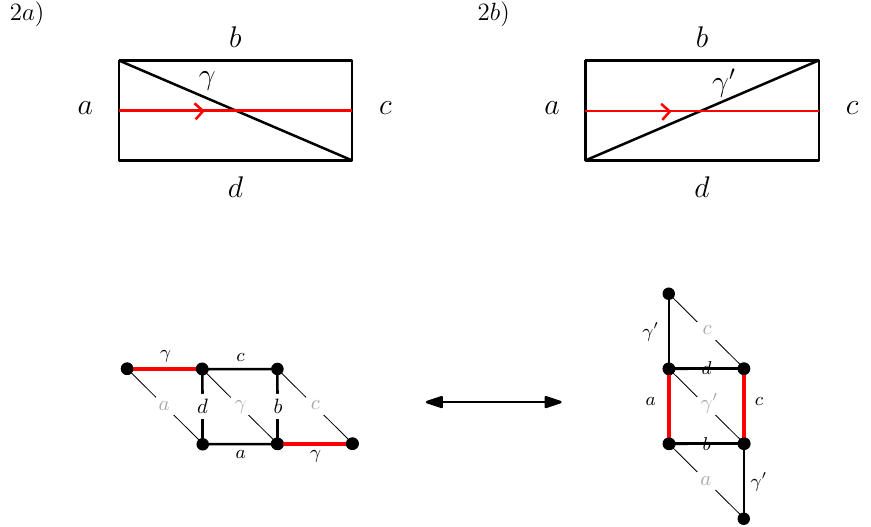}
\includegraphics[width=11cm]{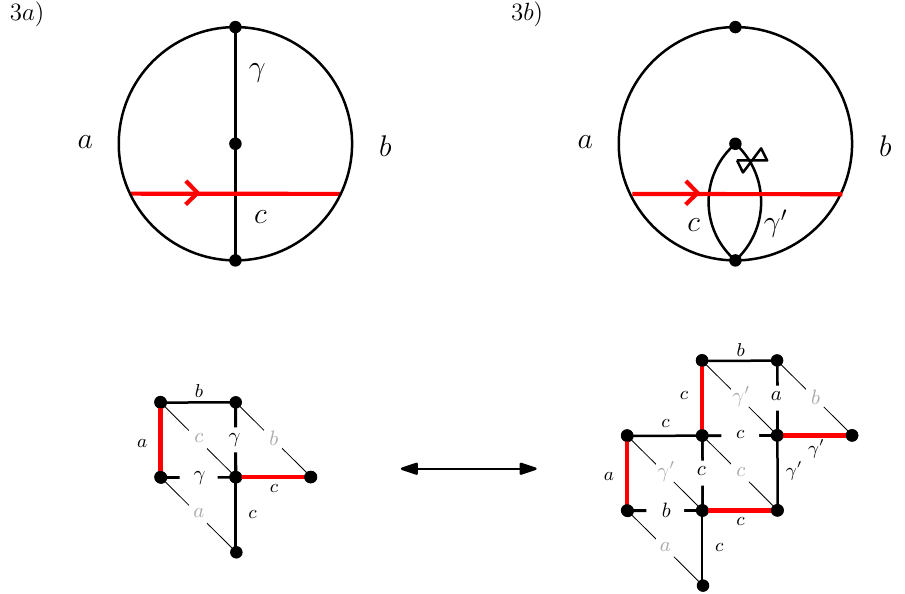}
\includegraphics[width=11cm]{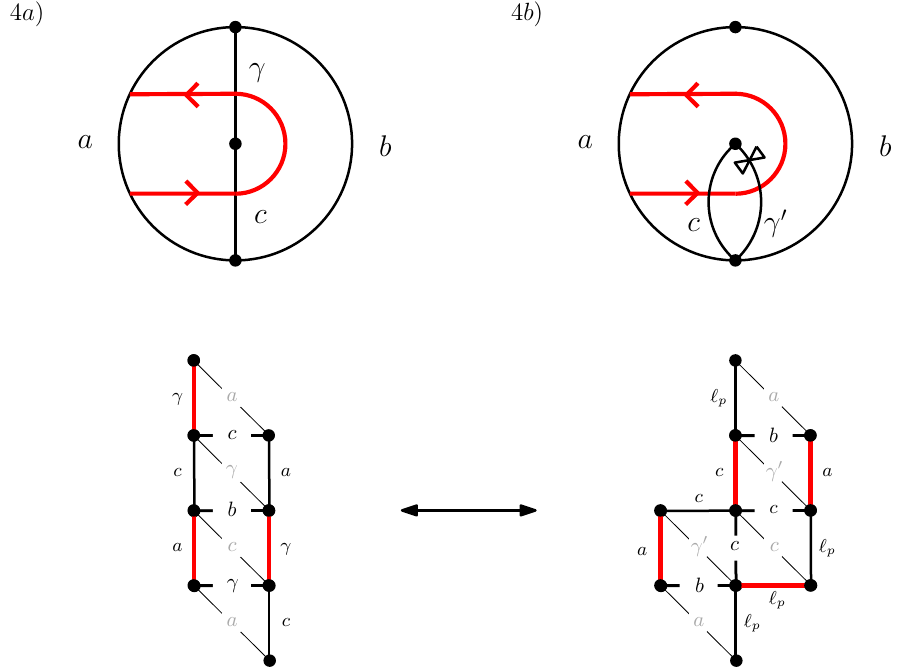}
\includegraphics[width=11cm]{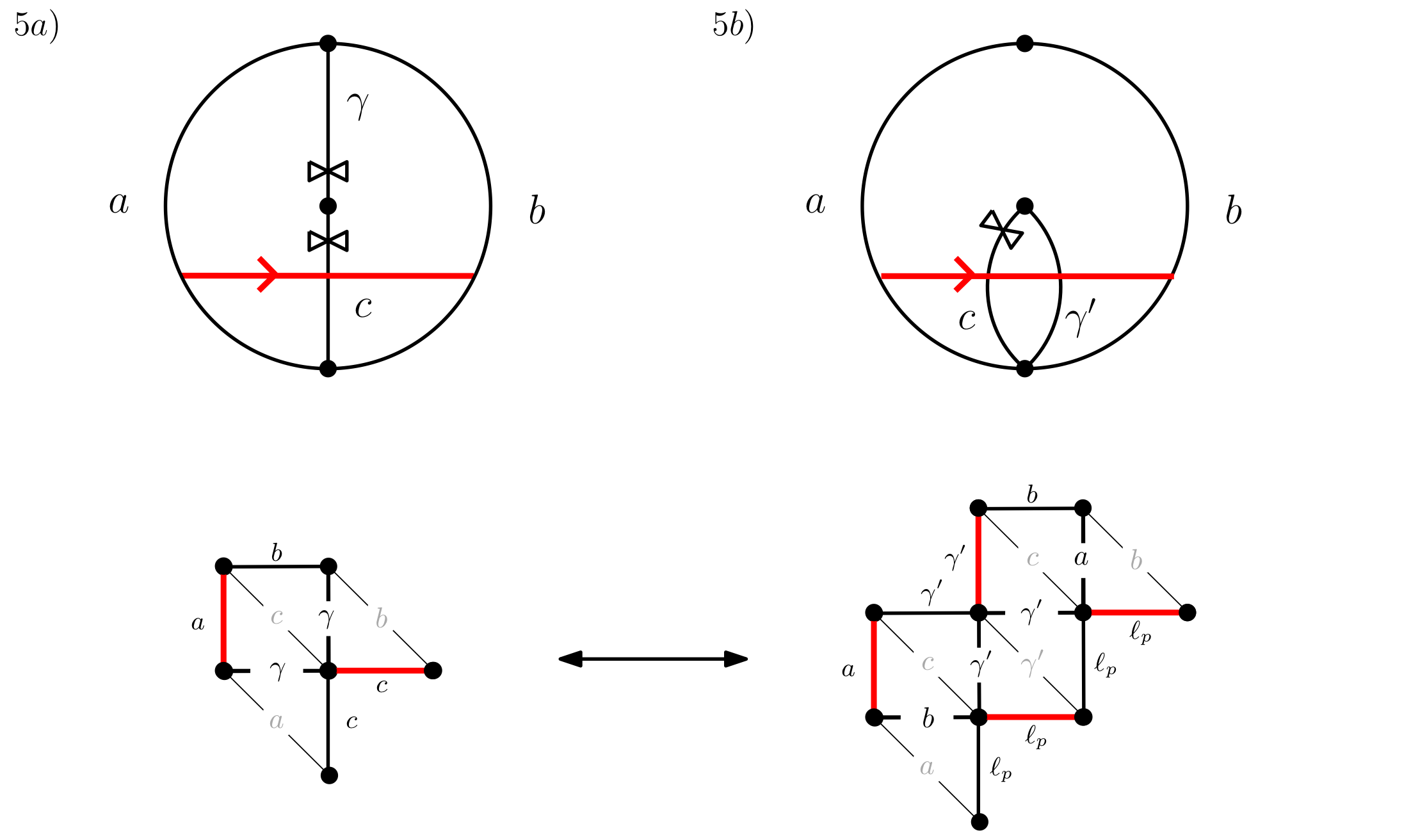}
\includegraphics[width=11cm]{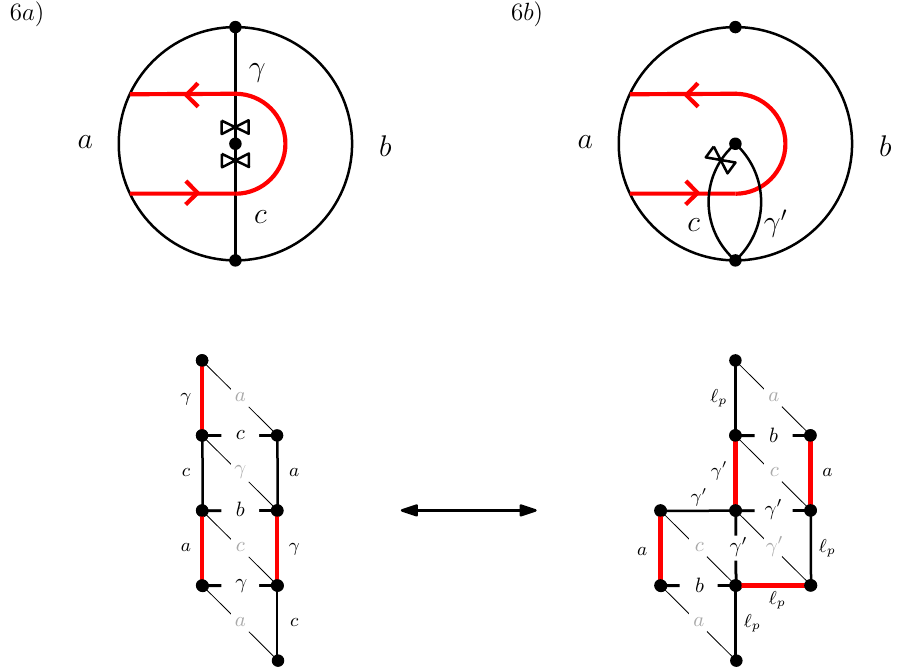}
\end{center}

\begin{lem}
\label{gvector difference}
Let $\alpha$, $T$ and $T'$ be as in the statement of Theorem \ref{thm: comb key lemma}, and let $\mathcal{L}_k$ denote the set of all local curves of $\alpha$ with respect to the flipping/mutating index $k$. Then the following equality holds:
\begin{equation}
\label{gvector sum}
g_{G(T,\alpha)} - \displaystyle \sum_{\beta \in \mathcal{L}_k} g_{G(T,\beta)} = g_{G(T',\alpha)} - \displaystyle \sum_{\beta \in \mathcal{L}_k} g_{G(T',\beta)}
\end{equation}
Moreover, the $k^{th}$ component of $g_{G(T,\alpha)} - \displaystyle \sum_{\beta \in \mathcal{L}_k} g_{G(T,\beta)}$ is $0$.
\end{lem}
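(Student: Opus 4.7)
The plan is to establish this identity by a decomposition analogous to the one used for snake $\mathbf{h}$-vectors in Proposition~\ref{local hsum}. First I would decompose $\alpha$ into the local segments $\beta\in\mathcal{L}_k$ at the flipping arc $\tau_k$ together with the complementary \emph{gap} segments $\gamma_1,\ldots,\gamma_s$: by the minimality clause in the definition of a local segment, each $\gamma_l$ avoids $\tau_k$ and $\tau_k'$ altogether. This induces a decomposition of the band graph $G(T,\alpha)$ into alternating snake subgraphs $G(T,\beta)$ and $G(T,\gamma_l)$ whose tiles partition the tiles of $G(T,\alpha)$.

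The key intermediate step would be a snake $\mathbf{g}$-vector local sum formula
\[
g_{G(T,\alpha)} = \sum_{\beta\in\mathcal{L}_k} g_{G(T,\beta)} + \sum_{l=1}^{s} g_{G(T,\gamma_l)}.
\]
The diagonal contributions on both sides match tautologically, since every tile of $G(T,\alpha)$ sits in exactly one subgraph of the decomposition. For the minimal-matching contributions, the restriction of the global minimal matching of $G(T,\alpha)$ to a subgraph need not equal the minimal matching of that subgraph, but the two differ only by chains of edge-flips that preserve the multiplicity of each arc-label in the matching, so the boundary-edge counts still agree. I would verify this compatibility via the extendability and negative-diagonal tools already developed in the proof of Proposition~\ref{local hsum}, combined with a finite check over the six canonical local configurations displayed in Figure~\ref{local gvector flips}.

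With the local sum formula in hand, the main identity~\eqref{gvector sum} reduces to $\sum_l g_{G(T,\gamma_l)}=\sum_l g_{G(T',\gamma_l)}$. But each $\gamma_l$ lies in a region of $\Sigma$ disjoint from $\tau_k$, and the arcs of $T$ and $T'$ agree outside $\tau_k$; consequently $G(T,\gamma_l)=G(T',\gamma_l)$ as labeled snake graphs and the two sums are literally equal. For the $k$-th component statement, no tile of any gap snake graph $G(T,\gamma_l)$ has diagonal labeled $\tau_k$ and no boundary edge in $G(T,\gamma_l)$ carries the label $\tau_k$ either (since $\gamma_l$ is disjoint from $\tau_k$), so the $k$-th component of $\sum_l g_{G(T,\gamma_l)}$ vanishes, and the local sum formula then gives the claim.

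The main obstacle I anticipate is the local sum formula itself --- specifically the matching compatibility: while the diagonal labels split cleanly across subgraphs, the global minimal matching does not automatically restrict to the local minimal matchings. I expect the cleanest route is an explicit case analysis on the six configurations of Figure~\ref{local gvector flips}, producing in each case a choice of minimal matching on the local snake subgraph whose orientations on the endpoint diagonals are compatible with those of the global minimal matching, exactly in the spirit of the proof of Proposition~\ref{local hsum}. Additional care is required when $\tau_k$ or an adjacent arc is the folded side of a self-folded triangle in $T^\circ$; this can be handled by the same tagged-to-plain conversion device already used in the proof of Theorem~\ref{full comb key lemma}.
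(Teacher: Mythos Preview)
Your overall strategy matches the paper's: decompose $\alpha$ into local pieces at the flip region and complementary gap pieces, and exploit that the gap pieces (and hence their labeled snake graphs) are unchanged when passing from $T$ to $T'$. The paper's proof is extremely terse: it simply asserts, by direct inspection of the six configurations in Figure~\ref{local gvector flips} (which display the minimal matchings in red), that the complements $G(T,\alpha)\setminus\coprod_\beta G(T,\beta)$ and $G(T',\alpha)\setminus\coprod_\beta G(T',\beta)$ are isomorphic as labeled graphs, and concludes \eqref{gvector sum} immediately. For the $k^{\text{th}}$-component claim it just notes that every edge of $G(T,\alpha)$ carrying the label $\gamma=\tau_k$ already lies in some $G(T,\beta)$.

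The place where your plan diverges---and where there is a real pitfall---is the intermediate \emph{local sum formula}
\[
g_{G(T,\alpha)}=\sum_{\beta\in\mathcal{L}_k} g_{G(T,\beta)}+\sum_{l} g_{G(T,\gamma_l)}.
\]
This identity is stronger than what the lemma asserts and is in general \emph{false}: when you cut the band graph into snake pieces, each cut edge becomes a boundary edge of both adjacent pieces and enters exactly one of their minimal matchings, so the sum on the right picks up one extra edge-label per cut that is absent from $x(P_-^{G(T,\alpha)})$. Your proposed remedy---that the restriction of the global $P_-$ and the local $P_-$ ``differ only by chains of edge-flips that preserve the multiplicity of each arc-label''---does not hold, since a tile-twist replaces two edges by two edges with genuinely different labels. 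What saves the day, and what the paper's formulation captures cleanly, is not that these correction terms vanish but that they are \emph{identical} for $T$ and $T'$: every cut sits between a local piece and a gap piece, the gap side of the cut (tile, orientation, and edge label) is unchanged by the flip, and by the six-configuration check the local side of the cut produces the same boundary behaviour before and after. So rather than proving your local sum formula, you should argue directly that the difference $g_{G(T,\alpha)}-\sum_\beta g_{G(T,\beta)}$ is computed entirely from data living on the complement, and that this complement is the same labeled graph (with the same restriction of $P_-$) for $T$ and for $T'$. Once you adopt that viewpoint, the case analysis you propose on Figure~\ref{local gvector flips} is exactly what the paper does, and the argument goes through without the problematic step.
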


\begin{proof}
By direct inspection of Figure \ref{local gvector flips} we see that the graphs 
\[
G(T,\alpha) \setminus \big(\displaystyle \coprod_{\beta \in \mathcal{L}_k} G(T,\beta)\big) \qquad \text{and} \qquad G(T,\alpha) \setminus \big(\displaystyle \coprod_{\beta \in \mathcal{L}_k} G(T,\beta)\big)
\]
are isomorphic. The validity of Equation (\ref{gvector sum}) immediately follows.

Furthermore, let $\gamma \in T$ be the arc corresponding to $k$. Note that $\gamma$ is an edge label of $G(T,\alpha)$ \emph{if and only if} this edge belongs to $G(T,\beta)$ for some $\beta \in \mathcal{L}_k$. Consequently, the $k^{th}$ component of $g_{G(T,\alpha)} - \displaystyle \sum_{\beta \in \mathcal{L}_k} g_{G(T,\beta)}$ is $0$.
\end{proof}

\begin{prop}
\label{comb gvector mutation 2}
Let $\alpha$, $T$ and $T'$ be as in the statement of Theorem \ref{thm: comb key lemma}.
Then:
\begin{equation}
\label{comb gvector mutation eq 2}
    \left[\begin{array}{c}-B(T')\\
    \phantom{-}\bg_{G(T',\alpha)}\end{array}\right]
    =\mu_k\left(
    \left[\begin{array}{c}-B(T)\\
    \phantom{-}\bg_{G(T,\alpha)}\end{array}\right]\right).
\end{equation}
\end{prop}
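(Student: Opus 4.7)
The plan is to first establish the equivalent pair of identities \eqref{comb kth gvector} and \eqref{comb gvector mutation} from Theorem \ref{thm: comb key lemma} by a local case analysis, and then to deduce the matrix mutation form \eqref{comb gvector mutation eq 2} from them. Working with the $h_k$-version \eqref{comb gvector mutation} is convenient because its right-hand side, $g_j + [b_{jk}]_+\,g_k - b_{jk}\,h_k$, is \emph{linear} in both $g_k$ and $h_k$, which sidesteps the piecewise linearity of the matrix mutation map $\gam_k^{B(T)}$ from \eqref{eq:gproj-transf}.

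Lemma \ref{gvector difference} and Proposition \ref{local hsum} supply the additive decompositions
\[
\bg_{G(T,\alpha)} - \bg_{G(T',\alpha)} = \sum_{\beta\in\mathcal{L}_k}\bigl(\bg_{G(T,\beta)} - \bg_{G(T',\beta)}\bigr), \qquad h_k(T,\alpha) = \sum_{\beta\in\mathcal{L}_k} h_k(T,\beta),
\]
together with the analogous identity $h_k(T',\alpha) = \sum_{\beta\in\mathcal{L}_k} h_k(T',\beta)$. Combined with the last assertion of Lemma \ref{gvector difference}, these decompositions reduce both \eqref{comb kth gvector} and \eqref{comb gvector mutation} to the task of verifying the same identities for each of the six local configurations in Figure \ref{local gvector flips} separately: since \eqref{comb kth gvector} and \eqref{comb gvector mutation} are linear in the vectors $\bg$ and in $h_k$, summing over $\beta \in \mathcal{L}_k$ will then yield the global statements.

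For each local curve $\beta \in \mathcal{L}_k$ I would proceed as follows: (i) read off the minimal matching $P_-$ of the small snake or band graph $G(T,\beta)$ indicated in the figure (its edges are drawn in red) and compute $\bg_{G(T,\beta)} = \deg\bigl(x(P_-)/\operatorname{cross}(T,\beta)\bigr)$ with the grading $\deg(x_i) = \mathbf{e}_i$; (ii) repeat the computation for $G(T',\beta)$; (iii) extract the local $h$-vector components from the tropical evaluation given in the statement of Theorem \ref{thm: comb key lemma}; and (iv) directly check the local instances of \eqref{comb kth gvector} and \eqref{comb gvector mutation}. Since each of the six configurations contains only a few tiles, the resulting verifications are finite and essentially mechanical, completely analogous to the local $F$-polynomial checks already carried out in the proof of Theorem \ref{full comb key lemma}.

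Once the global forms of \eqref{comb kth gvector} and \eqref{comb gvector mutation} are in hand, the matrix mutation identity \eqref{comb gvector mutation eq 2} follows algebraically. The top block $-B(T)$ transforms to $-B(T')$ by the ordinary Fomin--Zelevinsky matrix mutation rule. For the bottom row, the $k$-th component identity $g'_k = -g_k$ is exactly \eqref{comb kth gvector} applied simultaneously to $T$ and $T'$, while for $j\neq k$ unraveling the matrix mutation rule gives $g'_j - g_j = [g_k]_+[b_{jk}]_+ - [-g_k]_+[-b_{jk}]_+$. Using Remark \ref{rem:snake-h-vector-is-non-positive} ($h_k \leq 0$) together with \eqref{comb kth gvector} forces $h_k = -[-g_k]_+$, and substituting this back into \eqref{comb gvector mutation} recovers precisely the matrix mutation formula. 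The main obstacle is step (iv) of the local verification: for each of the six configurations one must carefully describe how the minimal matching changes under the flip and then check \eqref{comb kth gvector} and \eqref{comb gvector mutation} in that case; this is the bulk of the combinatorial work, but is manageable given the small size of each configuration.
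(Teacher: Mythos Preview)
Your overall plan—reduce to the six local configurations via Lemma~\ref{gvector difference} and Proposition~\ref{local hsum}, verify locally, then sum—is the same as the paper's. The difference is that the paper verifies the matrix-mutation identity \eqref{comb gvector mutation eq 2} directly in each local case, and only afterwards (via Lemma~\ref{hk min}) derives \eqref{comb kth gvector} and \eqref{comb gvector mutation} from it; you go in the reverse direction.

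Your route has a genuine gap in the last step. The claim that Remark~\ref{rem:snake-h-vector-is-non-positive} together with \eqref{comb kth gvector} ``forces $h_k=-[-g_k]_+$'' is false: the constraints $h_k\le 0$, $h_k'\le 0$ and $g_k=h_k-h_k'$ are satisfied for instance by $h_k=-1$, $h_k'=-2$, $g_k=1$, where $-[-g_k]_+=0\neq -1$. What you actually need to pass from \eqref{comb gvector mutation} to \eqref{comb gvector mutation eq 2} is exactly $h_k=\min(0,g_k)$, which is the content of the paper's Lemma~\ref{hk min}. That lemma does not follow formally from non-positivity; its proof uses the additional combinatorial observation that all local segments of $\alpha$ contribute the same sign to the $k$-th shear coordinate, so that $\min\bigl(0,\sum_\beta g_k^\beta\bigr)=\sum_\beta\min(0,g_k^\beta)$. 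You could repair your argument by also verifying $h_k^\beta=\min(0,g_k^\beta)$ in each of the six local cases and then invoking this sign-consistency fact, but as written the deduction of \eqref{comb gvector mutation eq 2} is incomplete.
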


\begin{proof}
By Lemma \ref{gvector difference} it suffices to show equation (\ref{comb gvector mutation eq 2}) holds for the six configurations listed in Figure \ref{local gvector flips}. The result then follows by direct inspection.
Note that a little extra care is needed when treating the scenario that boundary edges of the flip region are labelled by some $\ell_p$. To this end, recall that $x_{\ell_p} = x_{\beta}x_{\beta^{(p)}}$ for some $\beta \in T$, and let us denote the arc corresponding to $k$ by $\gamma$. The validity of equation (\ref{comb gvector mutation eq 2}) then follows from the equality $b_{\beta \gamma} = b_{\beta^{(p)} \gamma}$.
\end{proof}

\begin{thm}\label{thm:shear-coords-equal-snake-g-vector-for-closed-curves}
Suppose $\bSigma=\surf$ is a surface with non-empty boundary.
Let $\alpha$ be a simple closed curve on $\bSigma$, and $T$ a tagged triangulation of $\bSigma$. Then $$\mathbf{g}_{G(T,\alpha)} = \Sh_T(\alpha).$$

\end{thm}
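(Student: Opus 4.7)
The plan is to prove the equality $\mathbf{g}_{G(T,\alpha)}=\Sh_T(\alpha)$ by induction on the length of a sequence of flips relating $T$ to a conveniently chosen ``base'' triangulation $T_\alpha$. Since the underlying surface $\bSig$ has non-empty boundary, any two of its tagged triangulations are connected by a finite sequence of flips, so it suffices to (i) verify the equality for a single well-chosen triangulation, and (ii) check that both sides of the equality transform in exactly the same way under a flip.

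For the base case, I would invoke Proposition~\ref{prop:existence-ad-hoc-triangulation-for-closed-curve}, which produces a triangulation $T_\alpha$ such that
\[
\bg_{A(T_\alpha)}(M(T_\alpha,\alpha,\lambda,1)) \;=\; \Sh_{T_\alpha}(\alpha) \;=\; \bg_{G(T_\alpha,\alpha)}.
\]
In particular, the two vectors of interest coincide at $T_\alpha$.

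For the inductive step, assume $\bg_{G(T,\alpha)}=\Sh_T(\alpha)$ for some tagged triangulation $T$ and let $T'=f_{\tau_k}(T)$ be obtained from $T$ by flipping an arc $\tau_k\in T$. By Proposition~\ref{comb gvector mutation 2}, the snake $g$-vector satisfies
\[
\left[\begin{array}{c}-B(T')\\ \phantom{-}\bg_{G(T',\alpha)}\end{array}\right]
=\mu_k\!\left(\left[\begin{array}{c}-B(T)\\ \phantom{-}\bg_{G(T,\alpha)}\end{array}\right]\right),
\]
while Theorem~\ref{thm:behavior-of-dual-shear-coords-under-matrix-mutation} gives the identical formula for dual shear coordinates:
\[
\left[\begin{array}{c}-B(T')\\ \Sh_{T'}(\alpha)\end{array}\right]
=\mu_k\!\left(\left[\begin{array}{c}-B(T)\\ \Sh_T(\alpha)\end{array}\right]\right).
\]
Applying the induction hypothesis inside $\mu_k$, the two right-hand sides coincide, hence so do the last rows of the left-hand sides, proving $\bg_{G(T',\alpha)}=\Sh_{T'}(\alpha)$.

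The main obstacle has already been discharged elsewhere in the paper: the verification of the base case (Proposition~\ref{prop:existence-ad-hoc-triangulation-for-closed-curve}, which requires constructing an adapted triangulation, identifying a band module whose injective $g$-vector matches both the dual shear coordinates and the snake $g$-vector) and the proof that snake $g$-vectors obey the Fomin--Zelevinsky matrix mutation rule (Proposition~\ref{comb gvector mutation 2}, via the local-segment/local-$g$-vector analysis carried out in Section~\ref{CKL}). Once both of these are in hand, the proof of the theorem reduces to the short induction outlined above, and the key conceptual point is the compatibility of two a priori unrelated piecewise-linear recursions with a common initial condition.
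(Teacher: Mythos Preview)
Your proposal is correct and matches the paper's proof essentially verbatim: the paper also invokes Proposition~\ref{prop:existence-ad-hoc-triangulation-for-closed-curve} for the base case and then appeals to Proposition~\ref{comb gvector mutation 2} together with Theorem~\ref{thm:behavior-of-dual-shear-coords-under-matrix-mutation} to propagate the equality along flips.
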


\begin{proof}
By Proposition \ref{prop:existence-ad-hoc-triangulation-for-closed-curve}, there exists a tagged triangulation $T_\alpha$ such that $\Sh_{T_\alpha}(\alpha) = \bg_{T_\alpha}(G(T_\alpha,\alpha))$. Since $T$ can be obtained from $T_\alpha$ by a finite sequence of flips, the theorem follows from Proposition \ref{comb gvector mutation 2} and Theorem \ref{thm:behavior-of-dual-shear-coords-under-matrix-mutation}.
\end{proof}

\begin{rmk}
    Theorem \ref{thm:shear-coords-equal-snake-g-vector-for-closed-curves} should be compared to \cite[Corollary 6.15-(2)]{musiker2013bases}, though the reader is advised to be wary of signs and conventions. Our techniques provide a new, independent proof of Theorem \ref{thm:shear-coords-equal-snake-g-vector-for-closed-curves}.
\end{rmk}

\subsection{Proof of equations (\ref{comb kth gvector}) and (\ref{comb gvector mutation}) in the Combinatorial Key Lemma}

\begin{lem}
\label{hk min}
For each $k \in \{1,\ldots, n\}$ the snake $g$- and $h$-vectors $\mathbf{g}_{G(T,\alpha)} = (g_{\alpha; 1},\ldots, g_{\alpha; n})$ and $\mathbf{h}_{G(T,\alpha)} = (h_{\alpha; 1},\ldots, h_{\alpha; n})$ of a simple closed curve $\alpha$ satisfy: 

\begin{equation}
h_{\alpha; k} = \min(0, g_{\alpha; k})
\end{equation}

\end{lem}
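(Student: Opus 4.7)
The plan is to combine the Combinatorial Key Lemma (specifically equation \eqref{comb kth gvector}) with the local decomposition supplied by Proposition \ref{local hsum}. Letting $T'$ denote the flip of $T$ at $\tau_k$ and writing $h_k' := h_{\alpha, T'; k}$, equation \eqref{comb kth gvector} gives $g_k = h_k - h_k'$, while Remark \ref{rem:snake-h-vector-is-non-positive} guarantees $h_k, h_k' \leq 0$. The statement $h_k = \min(0, g_k)$ is therefore equivalent to the claim that at least one of $h_k, h_k'$ equals zero: if $h_k = 0$ then $g_k = -h_k' \geq 0$ and $\min(0, g_k) = 0 = h_k$, while if $h_k' = 0$ then $g_k = h_k \leq 0$ and $\min(0, g_k) = h_k$.

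To establish this equivalent claim in the generic case, that is when $\alpha$ is not a closed curve enclosed by $T$ inside a cylinder $C_{1,1}$, the plan is to apply Proposition \ref{local hsum} with $\gamma = \tau_k$, writing $h_k = \sum_{\alpha_{ij} \in \mathcal{L}} h_{\alpha_{ij}; k}$, and then read off the explicit case analysis from the proof of that Proposition. Cases 1a through 8a show that, in every one of the eight geometric configurations of Figure \ref{flips} for the local segment in $T$, the local contribution $h_{\alpha_{ij}; k}$ equals zero, either by the minimal matching itself or by the observation that positivity of a $\gamma$-diagonal in a matching forces positivity of the adjacent diagonals which contribute non-negatively to the tropical evaluation. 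Summing over all local segments produces $h_k = 0$, whence $g_k = -h_k' \geq 0$ and $\min(0, g_k) = 0 = h_k$.

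The exceptional situation, when $\alpha$ is enclosed by $T$ in $C_{1,1}$ and hence falls outside the scope of the case analysis of Proposition \ref{local hsum}, is settled by a direct computation on the two triangulations of $C_{1,1}$ and its unique simple closed curve, as in the example worked out immediately after Theorem \ref{thm: comb key lemma}: the snake $F$-polynomial is written down explicitly and one verifies $h_k = \min(0, g_k)$ at each coordinate. Note that in that example both behaviors $h_k = 0$ (with $g_k \geq 0$) and $h_k = g_k < 0$ do occur, so the $C_{1,1}$ case genuinely requires both halves of the minimum.

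The main obstacle is the case analysis of the eight local configurations, which is combinatorially delicate but is already carried out in the proof of Proposition \ref{local hsum}. Thus establishing the present Lemma reduces to reassembling the conclusions of that case analysis with the single algebraic identity supplied by the Combinatorial Key Lemma and the non-positivity statement of Remark \ref{rem:snake-h-vector-is-non-positive}.
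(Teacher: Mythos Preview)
Your argument has two genuine problems.

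\textbf{Circularity.} You invoke equation \eqref{comb kth gvector}, i.e.\ $g_k = h_k - h_k'$, as an already-established ingredient of the Combinatorial Key Lemma. But in the paper's logical order, equations \eqref{comb kth gvector} and \eqref{comb gvector mutation} are proved \emph{after} Lemma~\ref{hk min} and \emph{using} it (together with Proposition~\ref{comb gvector mutation 2}). So your very first step assumes what remains to be shown.

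\textbf{The claim $h_k=0$ is false.} You assert that Cases~1a--8a force every local contribution $h_{\alpha_{ij};k}$ to vanish, hence $h_k=0$ in the non-exceptional case. If that were true, the Lemma would give $g_k=\Sh_T(\alpha)_k\geq 0$ for every $k$ and every $T$, which is plainly wrong: a simple closed curve can cross an arc in ``S'' configurations, producing negative shear coordinates. The gap is that Figure~\ref{flips} lists the local configurations only \emph{up to rotations and reflections}, and the relevant reflections exchange the two diagonals of the flip quadrilateral. Thus for a given $T$ and a given local segment, matching it to one of the eight listed pictures may force $\tau_k$ to play the role of $\gamma'$ rather than $\gamma$; the local contribution is then read off from a ``b'' case, which can be strictly negative. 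In other words, not every local segment of $\alpha$ in $T$ is of ``a'' type.

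The paper's proof avoids both pitfalls: it never uses \eqref{comb kth gvector}, and instead decomposes $g_k$ itself via shear coordinates, $g_k=\Sh_T(\alpha)_k=\sum_{\alpha_{ij}\in\mathcal L}\Sh_T(\alpha_{ij})_k$, then invokes the sign-coherence of the local shear coordinates (they are all $\geq 0$ or all $\leq 0$) to reduce to the purely local identity $h_{\alpha_{ij};k}=\min(0,\Sh_T(\alpha_{ij})_k)$, which is what the case analysis of Proposition~\ref{local hsum} actually verifies. The sign-coherence step is precisely what replaces your missing argument that local segments cannot mix ``a'' and ``b'' behaviour.
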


\begin{proof}
From Proposition \ref{local hsum} we have: 
\[
h_{\alpha; k} = \displaystyle \sum_{\alpha_{ij} \in \mathcal{L}} h_{\alpha_{ij}; k}.
\]
Directly from the definition of shear coordinates, for any (tagged) triangulation $T$ we have: 
\[
\Sh_T(\alpha) = \displaystyle \sum_{\alpha_{ij} \in \mathcal{L}} \Sh_T(\alpha_{ij}).
\]
Moreover, for any $k \in \{1,\ldots, n\}$ the following two statements hold: \begin{itemize}

\item $g_{\alpha; k} = \Sh_T(\alpha; k)$,

\item $\Sh_T(\alpha_{ij}; k) \geq 0$ for all $\alpha_{ij} \in \mathcal{L}$, or $\Sh_T(\alpha_{ij}; k) \leq 0$ for all $\alpha_{ij} \in \mathcal{L}$.

\end{itemize}
Therefore, to prove the lemma it suffices to show 
\[
h_{\alpha_{ij}; k} = \min\big(0, \Sh_T(\alpha_{ij}; k)\big)
\]
for every $\beta \in \mathcal{L}$. This follows by direct inspection, and from the explicit computations used in the proof of Proposition \ref{local hsum}.
\end{proof}

\begin{proof}[Proof of (\ref{comb kth gvector}) and (\ref{comb gvector mutation}) from Theorem \ref{thm: comb key lemma}.]

The validity of these two equations follow directly from Proposition \ref{comb gvector mutation 2} and Lemma \ref{hk min}.
\end{proof}


\section{Bangle functions are the generic basis}\label{sec:bangle-functions-are-the-generic-basis}

\subsection{The bangle function of a tagged arc belongs to the generic basis}

\begin{prop}\label{thm:bangle-of-tagged-arc-belongs-to-generic-basis}
Let $\bSigma=\surf$ be a surface with non-empty boundary, and let $\alpha$ be a tagged arc on $\bSigma$. For every tagged triangulation $T$ of $\bSigma$, the bangle function $\MSW(G(T,\alpha))$ is equal to the generic value taken by the coefficient-free Caldero-Chapoton function $CC_{A(T)}$ on the irreducible component $\pi_T(\alpha)\in\DecIrr^{\tau^-}(A(T))$.
\end{prop}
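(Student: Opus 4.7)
The plan is to show that both the bangle function $\MSW(G(T,\alpha))$ and the generic Caldero--Chapoton value $CC_{A(T)}(\pi_T(\alpha))$ compute the Laurent expansion, in the seed attached to $T$, of the same cluster variable $x_\alpha \in \mathcal{A}(\bSigma)$ corresponding to $\alpha$ under the Fomin--Shapiro--Thurston bijection (Theorem \ref{surface correspondence}). Once this is established, the claimed equality follows by the uniqueness of such an expansion in $\mathcal{A}(\bSigma)$.

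For the left-hand side this is exactly the Musiker--Schiffler--Williams formula: in the plain-arc case it is \cite{musiker2011positivity,musiker2013bases}, and for all tagged arcs $\alpha$ and all tagged triangulations $T$ it is Wilson's extension \cite{wilson2020surface}. In both cases the output, after specialising to trivial coefficients, is the Laurent expansion of $x_\alpha$ in the $T$-seed.

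For the right-hand side I would argue by induction on the distance in the flip graph from $T$ to a base triangulation $T_0$ with $\alpha\in T_0$. In the base case, $\Sh_{T_0}(\alpha)$ is the negative of the standard basis vector at the index of $\alpha$, so by the framing property of the bijection $\pi_{T_0}$ in Theorem \ref{thm:Lams-vs-taured-comps-iso}, the component $\pi_{T_0}(\alpha)\in\DecIrr^{\tau^-}(A(T_0))$ is the one-point component consisting of the decorated negative simple at the vertex indexed by $\alpha$; its Caldero--Chapoton function is tautologically the initial cluster variable $x_\alpha$. For the induction step, if $T'=f_{\tau_k}(T)$, then by the construction of $\pi_{T'}$ via the commutative diagram displayed after Theorem \ref{thm:Lams-vs-taured-comps-iso} we have $\pi_{T'}(\alpha)=\tilde{\mu}_k(\pi_T(\alpha))$. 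Proposition \ref{prp:JaMut}(a) provides a dense open subset of $\pi_{T'}(\alpha)$ whose points are, up to isomorphism, the DWZ mutations at $k$ of points drawn from a dense open subset of $\pi_T(\alpha)$. Combining Derksen--Weyman--Zelevinsky's representation-theoretic Key Lemma (Theorem \ref{keylemma}) for the $F$-polynomial with Proposition \ref{prp:JaMut}(c) for the $g$-vector, the generic Caldero--Chapoton function transforms under the cluster variable mutation rule between the $T$-seed and the $T'$-seed. Hence if $CC_{A(T)}(\pi_T(\alpha))=x_\alpha$ expanded in the $T$-seed, then $CC_{A(T')}(\pi_{T'}(\alpha))=x_\alpha$ expanded in the $T'$-seed.

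The only technical subtlety is that the DWZ Key Lemma is stated for a single decorated representation, whereas we need the statement on the generic stratum of a $\tau^-$-regular component; this is precisely the upgrade supplied by Proposition \ref{prp:JaMut}, so the obstacle has effectively been absorbed into the preparatory material of Section \ref{sec:laminations-as-components}. Beyond this, the argument is bookkeeping: both $\MSW(G(T,\alpha))$ and $CC_{A(T)}(\pi_T(\alpha))$ give the Laurent expansion of $x_\alpha$ in the $T$-seed, and therefore coincide as elements of $\mathcal{A}(\bSigma)$.
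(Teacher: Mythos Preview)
Your argument is correct and follows essentially the same route as the paper's proof: both identify $\MSW(G(T,\alpha))$ with the cluster variable $x_\alpha$ via \cite{musiker2011positivity,wilson2020surface}, and identify $CC_{A(T)}(\pi_T(\alpha))$ with $x_\alpha$ by starting from the negative simple decorated representation at a triangulation containing $\alpha$ and tracking its behaviour under DWZ mutation along a flip sequence to $T$, using Proposition~\ref{prp:JaMut} and Theorem~\ref{keylemma}. Your treatment is in fact slightly more explicit than the paper's in justifying why the base component is $\pi_{T_0}(\alpha)$ (via the shear-coordinate framing in Theorem~\ref{thm:Lams-vs-taured-comps-iso}) and in invoking the commutative diagram to propagate the identification $\pi_{T'}(\alpha)=\tilde{\mu}_k(\pi_T(\alpha))$; the paper leaves these points implicit.
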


\begin{proof}
Since the boundary of $\Sigma$ is not empty, by \cite[Proposition~7.10]{fomin2008cluster} there exists a sequence of flips that transforms $T$ in a tagged triangulation $T_\alpha=f_{k_m}\cdots f_{k_1}(T)$ containing~$\alpha$. The negative simple representation $\mathcal{S}_\alpha^-(A(T_\alpha))$ (see \cite[the paragraph preceding Proposition 10.15]{derksen2008quivers} or~\cite[Equation (1.15)]{derksen2010quivers}) is the unique point in an irreducible component $Z_{T_\alpha,\alpha}$ for the Jacobian algebra $A(T_\alpha)$. This component $Z_{T_\alpha,\alpha}$ is obviously $\tau^-$-rigid, i.e., it is a $\tau^-$-regular component with $E$-invariant zero. By Proposition \ref{prp:JaMut} and the invariance of the (injective) $E$-invariant under mutations \cite[Theorem 7.1]{derksen2010quivers}, $Z_{T,\alpha}:=\widetilde{\mu}_{k_m}\cdots\widetilde{\mu}_{k_1}(Z_{T_\alpha,\alpha})$ is a $\tau^-$-regular component with $E$-invariant zero for $A(T)$, and the generic value $CC_{A(T)}(Z_{T,\alpha})$ is given by $CC_{A(T)}(\mathcal{M}(T,\alpha))$, where $\mathcal{M}(T,\alpha):=\mu_{k_m}\cdots\mu_{k_1}(\mathcal{S}_\alpha^-(A(T_\alpha)))$.

Now, by \cite[Cor.~6.3]{fomin2007cluster} and \cite[Eqn.~(2.14) and Thm.~5.1]{derksen2010quivers}, $CC_{A(T)}(\mathcal{M}(T,\alpha))$ is the cluster variable that corresponds to $\alpha$ according to \cite[Theorem 7.11]{fomin2008cluster}. On the other hand, by \cite[Theorems 4.9, 4.16 and 4.20]{musiker2011positivity}, $CC_{A(T)}(\mathcal{M}(T,\alpha))=\MSW(G(T,\alpha))$.
\end{proof}

\subsection{The bangle function of a closed curve belongs to the generic basis}

\begin{prop}\label{thm:bangle-of-simple-closed-curve-belongs-to-generic-basis}
    Let $\bSigma=\surf$ be a surface with marked points and with non-empty boundary, and let $\alpha$ be a simple closed curve on $\bSigma$. For every tagged triangulation $T$ of $\bSigma$, the bangle function $\MSW(G(T,\alpha))$ is equal to the generic value taken by the coefficient-free Caldero-Chapoton function $CC_{A(T)}$ on the irreducible component $\pi_T(\alpha)\in \DecIrr^{\tau^-}(A(T))$.
\end{prop}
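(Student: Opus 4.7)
The plan is to prove the statement by flip-induction starting from the ad hoc triangulation $T_\alpha$ supplied by Proposition~\ref{prop:existence-ad-hoc-triangulation-for-closed-curve}. For the base case, Corollary~\ref{coro:generic-values-on-adhoc-irred-comp-coincide-with-MSW} already gives an indecomposable $\tau^-$-regular component $Z_{T_\alpha,\alpha}\in\DecIrr^{\tau^-}(A(T_\alpha))$ with
\[
\bg_{A(T_\alpha)}(Z_{T_\alpha,\alpha})=\bg_{G(T_\alpha,\alpha)}=\Sh_{T_\alpha}(\alpha)\quad\text{and}\quad CC_{A(T_\alpha)}(Z_{T_\alpha,\alpha})=\MSW(G(T_\alpha,\alpha)).
\]
By Theorem~\ref{thm:Lams-vs-taured-comps-iso} the generic $g$-vector is a faithful framing of the indecomposable part of $\DecIrr^{\tau^-}(A(T_\alpha))$, so $Z_{T_\alpha,\alpha}=\pi_{T_\alpha}(\alpha)$. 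This settles the base case.

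Since $\bSigma$ has non-empty boundary, $T$ is connected to $T_\alpha$ by a finite sequence of flips, so by induction it suffices to show that if the proposition holds for some tagged triangulation $T$, then it holds for $T'=f_k(T)$ for any $k$. By part~(e) of Proposition~\ref{prp:JaMut} we have $\pi_{T'}(\alpha)=\widetilde{\mu}_k(\pi_T(\alpha))$, and parts~(a)--(b) of the same proposition guarantee that a generic point of $\pi_{T'}(\alpha)$ is obtained as the QP-mutation $\mu_k(\mathcal{M})$ of a generic decorated representation $\mathcal{M}\in\pi_T(\alpha)$. The Caldero--Chapoton function $CC_{A(T)}(\mathcal{M})=\bx^{\bg_{A(T)}(\mathcal{M})}F_{\mathcal{M}}(\hat y_1,\ldots,\hat y_n)$ and the bangle function $\MSW(G(T,\alpha))$ (which can likewise be written as $\bx^{\bg_{G(T,\alpha)}}F_{G(T,\alpha)}(\hat y_1,\ldots,\hat y_n)$ by \cite{musiker2013bases}) are thus both determined by a $g$-vector and an $F$-polynomial. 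Hence it is enough to check that
\[
\bg_{A(T)}(\pi_T(\alpha))=\bg_{G(T,\alpha)}\quad\text{and}\quad F_{\mathcal{M}}=F_{G(T,\alpha)}
\]
propagate from $T$ to $T'$.

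The $g$-vector equality holds at every triangulation for free: combining Theorem~\ref{thm:Lams-vs-taured-comps-iso} with Theorem~\ref{thm:shear-coords-equal-snake-g-vector-for-closed-curves} gives $\bg_{A(T)}(\pi_T(\alpha))=\Sh_T(\alpha)=\bg_{G(T,\alpha)}$ for \emph{every} tagged triangulation $T$; alternatively one can argue that the mutation rule for $\bg_{A(T)}(\pi_T(\alpha))$ from Proposition~\ref{prp:JaMut}(c) coincides with that for $\bg_{G(T,\alpha)}$ given by equation~\eqref{comb gvector mutation} of the Combinatorial Key Lemma. The propagation of the $F$-polynomial equality is the crux of the argument: assuming $F_{\mathcal{M}}=F_{G(T,\alpha)}$, the tropical definition of the $h$-vector forces the DWZ $h$-vector of $\mathcal{M}$ in direction $k$ to coincide with the snake $h$-vector $h_k$ of $G(T,\alpha)$. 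Then DWZ's representation-theoretic Key Lemma (Theorem~\ref{keylemma}) and our Combinatorial Key Lemma (Theorem~\ref{thm: comb key lemma}) yield
\[
(y_k+1)^{h_k}F_{\mathcal{M}}(\mathbf y)=(y_k'+1)^{h_k'}F_{\mu_k(\mathcal{M})}(\mathbf y')\quad\text{and}\quad(y_k+1)^{h_k}F_{G(T,\alpha)}(\mathbf y)=(y_k'+1)^{h_k'}F_{G(T',\alpha)}(\mathbf y'),
\]
with identical left-hand sides; dividing gives $F_{\mu_k(\mathcal{M})}=F_{G(T',\alpha)}$, as required.

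The main obstacle — transferring the $F$-polynomial identity across a flip — is precisely what the matching of Theorems~\ref{keylemma} and~\ref{thm: comb key lemma} is designed to handle. What could go wrong is that the decorated representation obtained by $\mu_k(\mathcal{M})$ might fail to be generic in $\pi_{T'}(\alpha)$; however this is ruled out by Proposition~\ref{prp:JaMut}(a)--(b), which ensures that the map $\nu_{\pi_T(\alpha)}$ lands in $\pi_{T'}(\alpha)$ with dense $\operatorname{GL}$-saturation. Combining all of this, the inductive step goes through and the proposition follows.
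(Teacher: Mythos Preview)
Your proof is correct and follows essentially the same route as the paper: establish the base case at $T_\alpha$ via Corollary~\ref{coro:generic-values-on-adhoc-irred-comp-coincide-with-MSW}, identify $Z_{T_\alpha,\alpha}=\pi_{T_\alpha}(\alpha)$ through the $g$-vector framing, and then propagate along flips by matching the DWZ Key Lemma against the Combinatorial Key Lemma while using Proposition~\ref{prp:JaMut} to keep genericity. One small point you leave implicit is that the two $h_k'$ exponents on the right-hand sides also agree, but this follows at once from \eqref{kth gvector} and \eqref{comb kth gvector} together with the $g$-vector and $h_k$ matches you have already established; the paper is equally terse here.
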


\begin{proof}
Let $T$ be an arbitrary tagged triangulation of $\bSigma$, and let $T_\alpha$, and $M(T_\alpha,\alpha,\lambda,1)$ be as in Proposition \ref{prop:existence-ad-hoc-triangulation-for-closed-curve}. By Corollary \ref{coro:generic-values-on-adhoc-irred-comp-coincide-with-MSW}, the set
\[
    Z_{T_\alpha,\alpha}:=\overline{\bigcup_{\lambda\in\CC^*}\operatorname{GL}_{\mathbf{d}}(\CC)\cdot M(T_\alpha,\alpha,\lambda,1)}
\]
is a generically $\tau^-$-regular indecomposable irreducible component of the representation space $\operatorname{rep}(A(T_\alpha),\mathbf{d})$, where $\mathbf{d}:=\dimv(M(T_\alpha,\alpha,\lambda,1))$, with
\begin{equation}\label{eq:g-vector-matches-and-CC=MSW-for-closed-curve-and-ad-hoc-triangulation}
\bg_{A(T_\alpha)}(Z_{T_\alpha,\alpha})=\bg_{G(T_\alpha,\alpha)} \qquad \text{and} \qquad 
CC_{A(T_\alpha)}(Z_{T_\alpha,\alpha})=\MSW(G(T_\alpha,\alpha)).
\end{equation}
Thus,  $Z_{T_\alpha,\alpha}=\pi_{T_\alpha}(\alpha)$.
Moreover, the functions $\bg_{A(T_\alp)}$ and $CC_{A(T_\alp)}$ assume on
any point in $\bigcup_{\lambda\in\CC^*}\operatorname{GL}_{\mathbf{d}}(\CC)\cdot M(T_\alpha,\alpha,\lambda,1)$ 
the values \eqref{eq:g-vector-matches-and-CC=MSW-for-closed-curve-and-ad-hoc-triangulation}.

Since the boundary of $\Sigma$ is non-empty, there exists a finite sequence $(T_0,T_1,\ldots,T_m)$ of tagged triangulations, with $T_0=T_\alpha$ and $T_m=T$, such that for each $i=1,\ldots,n$, the triangulation $T_i$ is obtained from $T_{i-1}$ by flipping an arc $k_i\in T_{i-1}$. By \cite[Theorem 8.1]{labardini2016quivers} and \cite[Proposition 3.7]{derksen2008quivers}, the Jacobian algebra $A(T):=\cP_\CC(Q'(T),S'(T))$ is isomorphic to the Jacobian algebra of $\mu_{k_m}\cdots\mu_{k_1}(Q'(T_\alpha),S'(T_\alpha))$. By Proposition \ref{prp:JaMut}, 
\[
Z_{T,\alpha}:=\widetilde{\mu}_{k_m}\cdots\widetilde{\mu}_{k_1}(Z_{T_\alpha,\alpha})
\]
    is a $\tau^-$-regular indecomposable irreducible component of the representation varieties of $\cP_\CC(\mu_{k_m}\cdots\mu_{k_1}(Q'(T_\alpha),S'(T_\alpha)))\cong A(T)$. By Theorem \ref{thm:Lams-vs-taured-comps-iso}, we have $Z_{T,\alpha}=\pi_{T}(\alpha)$.

Since $CC_{A(T_\alpha)}(M(T_\alpha,\alpha,\lambda,1))$ is the generic value $CC_{A(T_\alpha)}(Z_{T_\alpha,\alpha})$, Proposition~\ref{prp:JaMut} implies that $CC_{A(T)}(M(T,\alpha,\lambda,1))$ is the generic value $CC(Z_{T,\alpha})$, where $$M(T,\alpha,\lambda,1):=\mu_{k_m}\cdots\mu_{k_1}(M(T_\alpha,\alpha,\lambda,1)).$$

By Proposition~\ref{prop:existence-ad-hoc-triangulation-for-closed-curve} and Corollary \ref{coro:generic-values-on-adhoc-irred-comp-coincide-with-MSW} we have 
    $$\bg_{A(T_\alpha)}(M(T_\alpha,\alpha,\lambda,1))=\bg_{A(T_\alpha)}(Z_{T_\alpha,\alpha})=\Sh_{T_\alpha}(\alpha)=\mathbf{g}_{G(T_\alpha,\alpha)},
    $$ 
    so Proposition \ref{prp:JaMut} and Theorems \ref{thm:Lams-vs-taured-comps-iso} and \ref{thm:shear-coords-equal-snake-g-vector-for-closed-curves} imply that
    $$\bg_{A(T)}(M(T,\alpha,\lambda,1))=\bg_{A(T)}(Z_{T,\alpha})=\Sh_{T}(\alpha)=\mathbf{g}_{G(T,\alpha)}.$$

At this point, we have shown that the identity $\bg_{A(T')}(Z_{T',\alpha})=\mathbf{g}_{G(T,\alpha)}$ holds for every tagged triangulation, which allows us to see that Theorem \ref{thm: comb key lemma} and \cite[Theorem 5.1 and Lemma 5.2]{derksen2010quivers} and the particular equality $F_{M(T_\alpha,\alpha,\lambda,1)}=F_{G(T_\alpha,\alpha)}$ that was established in Proposition \ref{prop:existence-ad-hoc-triangulation-for-closed-curve}, imply that $F_{M(T,\alpha,\lambda,1)}=F_{G(T,\alpha)}$.

We deduce that $CC_{A(T)}(Z_{T,\alpha})=CC_{A(T)}(M(T,\alpha,\lambda,1))=\MSW(G(T,\alpha))$ as elements of the Laurent polynomial ring $\mathbb{Z}[x_j^{\pm 1}\suchthat j\in T]$.
\end{proof}

\subsection{Main result}

Recall that for any surface with marked points $\bSig$ with 
a (tagged) triangulation $T$  the  
Caldero-Chapoton algebra~$\cC_{A(T)}(\bSig)$  is spanned by the 
Caldero-Chapoton functions of all decorated representations
of $A(T)$. We have a well-known chain of inclusions
$\cA(\bSig)\subset\cC_{A(T)}\subset\cU(\bSig)$. 
See for example~\cite{cerulli2015caldero-chapoton} for more
details.  
.  

\begin{thm}\label{thm:bangle-equals-generic-basis}
Let $\bSigma=\surf$ be a surface with marked points such that 
$\partial\Sig\neq\emptyset$. Then the following holds for each (tagged) triangulation $T$ of $\bSig$:
\begin{itemize}
\item[(a)]
We have 

\begin{itemize}
\item[(i)] $F_{G(T,\alpha)} = F_{\pi_T(L)}$ and
\item[(ii)] $\MSW(T, L)=CC_{A(T)}(\pi_T(L))$ 
\end{itemize}

\noindent for all laminations $L\in\Lamin(\bSig)$, where $\pi_T\colon \Lamin(\bSig)\ra\DecIrr^{\tau^-}(A(T))$ is the isomorphism of
partial KRS-monoids from Theorem~\ref{thm:Lams-vs-taured-comps-iso}. 
\item[(b)]
In particular, the set $\mathcal{B}^\circ(\bSigma, T)$ of coefficient-free bangle functions is equal to the generic basis  $\mathcal{B}_{A(T)}$ of the coefficient-free Caldero-Chapoton algebra $\cC_{A(T)}$.
\item[(c)] 
If $\Pu=\emptyset$ or $\abs{\Ma}\geq 2$ the set $\cB^\circ(\bSig,T)=\cB_{A(T)}$ is a basis of $\cA(\bSig)=\cU(\bSig)$. 
\end{itemize}
\end{thm}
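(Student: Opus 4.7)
The plan is to deduce parts (b) and (c) as essentially immediate consequences of (a), and to prove (a) by reducing from arbitrary laminations to individual laminates via the KRS-monoid structures on both sides, at which point Propositions \ref{thm:bangle-of-tagged-arc-belongs-to-generic-basis} and \ref{thm:bangle-of-simple-closed-curve-belongs-to-generic-basis} already supply the two needed indecomposable cases.

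For part (a), I would first write an arbitrary lamination as a KRS direct sum of indecomposable laminates, $L=\bigoplus_\alpha \alpha^{\oplus m_\alpha}$, so that by the isomorphism $\pi_T$ of framed KRS-monoids from Theorem \ref{thm:Lams-vs-taured-comps-iso} we have $\pi_T(L)=\bigoplus_\alpha \pi_T(\alpha)^{\oplus m_\alpha}$ in $\DecIrr^{\tau^-}(A(T))$. Because the direct sum in $\DecIrr^{\tau^-}(A(T))$ is defined only on $e_{A(T)}$-orthogonal pairs, and tameness of $A(T)$ together with Theorem \ref{thm:tau-red-comps-form-a-KRS-monoid}(c) implies that a generic decorated representation in $\pi_T(L)$ can be taken of the form $\bigoplus_\alpha \mathcal{M}_\alpha^{\oplus m_\alpha}$ with each $\mathcal{M}_\alpha$ generic in $\pi_T(\alpha)$. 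From this the generic $F$-polynomial is multiplicative and the generic injective $g$-vector additive under the KRS direct sum, which, compared with the defining product formula $\MSW(T,L)=\prod_\alpha \MSW(G(T,\alpha))^{m_\alpha}$ and the analogous product and sum formulas for snake $F$-polynomials and snake $g$-vectors, reduces (a)(i) and (a)(ii) to the case of a single indecomposable laminate.

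For a single laminate $\alpha$, Proposition \ref{thm:bangle-of-tagged-arc-belongs-to-generic-basis} covers the case of a tagged arc and Proposition \ref{thm:bangle-of-simple-closed-curve-belongs-to-generic-basis} the case of a simple closed curve, yielding (a)(ii); for the latter, (a)(i) is actually established along the way through the base case of Proposition \ref{prop:existence-ad-hoc-triangulation-for-closed-curve} combined with induction on flips using the Combinatorial Key Lemma \ref{thm: comb key lemma} together with the Derksen--Weyman--Zelevinsky Key Lemma \ref{keylemma}; for the former case (a)(i) is immediate, since both sides are equal to the same cluster variable and a cluster variable determines its $F$-polynomial uniquely once the initial seed and $g$-vector are fixed.

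For (b), the bijectivity of $\pi_T$ turns (a)(ii) directly into the set equality $\cB^\circ(\bSig,T)=\cB_{A(T)}$. For (c), this set is already known to be a basis of $\cA(\bSig)=\cU(\bSig)$: when $|\Ma|\geq 2$ this is \cite[Thm.~1.18]{geiss2020generic}, and when $\Pu=\emptyset$ the bangle functions were shown to form a basis by Musiker--Schiffler--Williams. The main technical obstacle I expect is the careful justification that generic $F$-polynomials and generic $g$-vectors behave respectively multiplicatively and additively under the KRS direct sum of $\tau^-$-regular components, since a priori $F_{M\oplus N}=F_M F_N$ requires vanishing conditions on extensions between $M$ and $N$; in the present tame setting this should follow from the existence of generic decompositions guaranteed by Theorem \ref{thm:tau-red-comps-form-a-KRS-monoid}(c), but it is the step that most needs care.
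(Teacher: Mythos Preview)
Your treatment of part (a) matches the paper's approach: reduce to indecomposable laminates via the KRS-monoid isomorphism $\pi_T$, then invoke Propositions~\ref{thm:bangle-of-tagged-arc-belongs-to-generic-basis} and~\ref{thm:bangle-of-simple-closed-curve-belongs-to-generic-basis}. The multiplicativity you flag as the delicate step is exactly what the paper isolates; it is dispatched not via tameness per se but by citing \cite[Lemma~4.11]{cerulli2015caldero-chapoton}, which gives $CC_{A(T)}(\overline{Z_1\oplus Z_2})=CC_{A(T)}(Z_1)\cdot CC_{A(T)}(Z_2)$ whenever $e_{A(T)}(Z_1,Z_2)=0$.

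There is, however, a genuine gap in your handling of (b) and (c). Statement (b) is not merely the set equality $\cB^\circ(\bSig,T)=\cB_{A(T)}$: it also asserts that this set is a \emph{basis} of the Caldero--Chapoton algebra $\cC_{A(T)}$. This is not automatic. The paper argues it in two cases: away from the once-marked torus, the potential $S'(T)$ is the unique non-degenerate potential on $Q'(T)$ by \cite[Thm.~1.4]{geiss2016the}, and then \cite[Cor.~6.14]{geiss2020generic} gives that $\cB_{A(T)}$ is a basis of $\cC_{A(T)}$; for the once-marked torus one instead uses \cite{Canakci2015OnCluster}, which gives that $\cB^\circ(\bSig_1,T)$ is a basis of $\cA(\bSig_1)=\cU(\bSig_1)$, and hence of $\cC_{A(T)}$ via the inclusions $\cA\subset\cC_{A(T)}\subset\cU$. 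Your proposal skips this entirely.

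This omission then propagates to (c). Your citation of Musiker--Schiffler--Williams for the case $\Pu=\emptyset$ is insufficient: their basis result requires $|\Ma|\geq 2$, so the unpunctured surfaces with $|\Ma|=1$ (in particular the once-marked torus) are not covered by either of your two references. The paper's route is different: it first establishes $\cA(\bSig)=\cU(\bSig)$ under the hypothesis of (c), either by local acyclicity \cite[Thm.~10.6 and Thm.~4.1]{muller2013locally} or by \cite{Canakci2015OnCluster}, and then concludes from the full strength of (b) together with $\cA\subset\cC_{A(T)}\subset\cU$.
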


\begin{proof} \hfill

(a)(i) Note that the case when $L$ consists entirely of tagged arcs is known due to the work of Derksen-Weyman-Zelevinsky~\cite[Theorem 5.1]{derksen2010quivers}. The case when $L$ contains a closed curve follows immediately from Theorem \ref{thm: comb key lemma}, Proposition \ref{prop:existence-ad-hoc-triangulation-for-closed-curve}, and ~\cite[Lemma 5.2]{derksen2010quivers}.

(a)(ii) By Theorem \ref{thm:Lams-vs-taured-comps-iso}, we know that there is a bijection between the set of single laminates in $\Lamin(\bSigma)$ and the set of indecomposable $\tau^-$-regular components of the representation spaces of the Jacobian algebra $A(T)$. Furthermore, we have proved in Propositions~\ref{thm:bangle-of-tagged-arc-belongs-to-generic-basis} and \ref{thm:bangle-of-simple-closed-curve-belongs-to-generic-basis} that for the indecomposable $\tau^-$-regular component $Z_{T,\alp}=\pi_T(\alp)$ associated to each single laminate $\alpha$, the coefficient-free generic Caldero-Chapoton function $CC_{A(T)}(Z_{T,\alpha})$ is equal to $\MSW(G(T,\alpha))$.

On the other hand, it follows easily from~\cite[Lemma~4.11]{cerulli2015caldero-chapoton} and the definitions, that when $Z=\overline{Z_1\oplus Z_2}$ for
$Z_1, Z_2\in\DecIrr^{\tau^-}(A(T)$ (with $E_{A(T)}(Z_1, Z_2)=0$) then we have $CC_{A(T)}(Z)= CC_{A(T)}(Z_1)\cdot CC_{A(T)}(Z_2)$.  
Since $\pi_T$ is an isomorphism of (tame) partial KRS-monids by Theorem~\ref{thm:Lams-vs-taured-comps-iso}, our claim follows from
these observations and the definition of $\MSW(T,L)$.

(b) Since $\pi_T$ is bijective we  get by (a) in particular
$\cB^\circ(\bSig, T)=\cB_{A(T)}$.  Next, suppose that $\bSig$ is not
the dreaded torus $\bSig_1$ (\emph{i.e.}\ we exclude the case $g(\Sig)=1, \Pu=\emptyset, \Ma=\{m\}$). Then, since $\partial\Sig\neq\emptyset$,
by~\cite[Thm.~1.4]{geiss2016the} the potential $S'(T)$ is up to right equivalence  the unique non-degenerate potential for $Q'(T)$. 
Thus, in this case $\cB_{A(T)}$ is by~\cite[Cor.~6.14]{geiss2020generic}
a basis of the Caldero-Chapoton algebra $\cC_{A(T)}$. 

In the only remaining case, the dreaded torus, by the main result of~\cite{Canakci2015OnCluster} we have in particular that
$\cB^\circ(\bSig_1, T)$ is a  basis of
$\cA(\bSig_1)=\cU(\bSig_1)$.  So our claim follows also in this case
since $\cA(\bSig)\subset\cC_{A(T)}\subset\cU(\bSig)$. 

(c) We have under this hypothesis $\cA(\bSig)=\cU(\bSig)$ either by the
main result from~\cite{Canakci2015OnCluster}, or because $\cA(\bSig)$
is locally acyclic by~\cite[Thm.~10.6 \& Thm.~4.1]{muller2013locally}. 
So our claim follows from~(b) together with  the inclusions $\cA(\bSig)\subset\cC_{A(T)}\subset\cU(\bSig)$.
\end{proof}

\section{Applications and open problems}\label{sec:remarks-and-problems}

During the proofs of Theorems \ref{thm:bangle-of-tagged-arc-belongs-to-generic-basis} and \ref{thm:bangle-of-simple-closed-curve-belongs-to-generic-basis}, we have shown that, given any lamination $\alpha \in\Lamin(\bSig)$, one can find a (tagged) triangulation $T_\alpha$ and a very concretely constructed (decorated) representation $\mathcal{M}(T_\alpha,\alpha)$ of the Jacobian algebra $A(T_\alpha)=\jacobalg{Q(T_\alpha),S(T_\alpha)}$ such that
\begin{enumerate}
\item $\bg_{A(T_\alpha)}(\mathcal{M}(T_\alpha,\alpha))=\mathbf{g}_{G(T_\alpha,\alpha)}$ \and $F_{M(T_\alpha,\alpha)}=F_{G(T_\alpha,\alpha)}.$
\item for any tagged triangulation $T=f_{k_m}\cdots f_{k_1}(T_\alpha)$, the decorated representation $\mathcal{M}(T,\alpha):=\mu_{k_m}\cdots\mu_{k_1}(\mathcal{M}(T_\alpha,\alpha))$ of the non-degenerate quiver with potential $(Q(T),S(T))$ satisfies
$\bg_{A(T)}(\mathcal{M}(T,\alpha))=\mathbf{g}_{G(T,\alpha)}$ \and $F_{M(T,\alpha)}=F_{G(T,\alpha)}.$
\end{enumerate}
Indeed, when $\alpha$ was a tagged arc, we picked $T_\alpha$ to be any tagged triangulation containing it, and $\mathcal{M}(T_\alpha,\alpha)$ to be the negative simple representation of $(Q(T_\alpha),S(T_\alpha))$ corresponding to $\alpha$; when $\alpha$ was a simple closed curve, we picked $T_\alpha$ to be a triangulation such that the Jacobian algebra $\Lambda_I$ of the restriction of $(Q(T_\alpha),S(T_\alpha))$ to the set $I$ of arcs crossed by $\alpha$ is gentle (see Proposition~\ref{prop:existence-ad-hoc-triangulation-for-closed-curve}), and $\mathcal{M}(T_\alpha,\alpha)$ to be the positive representation of $(Q(T_\alpha),S(T_\alpha))$ given by any quasi-simple band module $M(T_\alpha,\alpha,\lambda,1)$ arising from interpreting $\alpha$ as a band on $\Lambda_I$.

As a consequence, we get the following immediate application.

\begin{cor}

Let $\bSigma=\surf$ be a surface with marked points such that $\partial\Sig\neq\emptyset$, and let $T$ be a tagged triangulation of $\bSigma$.

Then for all laminations $\alpha \in\Lamin(\bSig)$, if $L$ is a closed curve (resp. a tagged arc) then the Euler-Poincaré characteristic $\chi(Gr_{e}(\mathcal{M}(T,\alpha)))$ equals the number of good matchings $P$ of the band graph (resp. loop graph) $G(T,\alpha)$ such that $y(P) = \prod_{i=1}^{n} y_i^{e_i}.$ 

\end{cor}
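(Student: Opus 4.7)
The result is a direct coefficient-extraction from the equality $F_{\mathcal{M}(T,\alpha)} = F_{G(T,\alpha)}$ that was established in the course of proving Propositions \ref{thm:bangle-of-tagged-arc-belongs-to-generic-basis} and \ref{thm:bangle-of-simple-closed-curve-belongs-to-generic-basis}, and recorded in Theorem \ref{thm:bangle-equals-generic-basis}(a)(i). The plan is to unpack each side into its natural expansion and then read off matching coefficients.

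First I would write down both polynomials in their defining expansions. By the definition of the representation-theoretic $F$-polynomial recalled in Section~\ref{ssec:gen-CC},
\[
F_{\mathcal{M}(T,\alpha)}(y_1,\ldots,y_n) = \sum_{\be \in \mathbb{Z}_{\geq 0}^n} \chi\bigl(\Gr_{\be}^{A(T)}(\mathcal{M}(T,\alpha))\bigr)\,\by^{\be},
\]
while the snake/band $F$-polynomial of Musiker--Schiffler--Williams is, by construction, a generating function for good matchings,
\[
F_{G(T,\alpha)}(y_1,\ldots,y_n) = \sum_{P \in \mathcal{P}(G(T,\alpha))} y(P),
\]
with each $y(P) = \prod_i y_i^{e_i(P)}$ for nonnegative integers $e_i(P)$ depending on the matching $P$.

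Second, I would invoke the identity $F_{\mathcal{M}(T,\alpha)} = F_{G(T,\alpha)}$. For a single laminate $\alpha$ (closed curve or tagged arc), this is exactly the content of Theorem \ref{thm:bangle-equals-generic-basis}(a)(i). That identity is assembled from two ingredients: the base-case equality established inside Proposition \ref{prop:existence-ad-hoc-triangulation-for-closed-curve} for the adapted triangulation $T_\alpha$ (together with the negative-simple case for tagged arcs coming from \cite{musiker2011positivity}), and its propagation along arbitrary sequences of flips, which is guaranteed by combining Derksen--Weyman--Zelevinsky's representation-theoretic Key Lemma \cite[Lemma 5.2]{derksen2010quivers} with the Combinatorial Key Lemma (Theorem \ref{thm: comb key lemma}).

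Finally, I would compare coefficients of an arbitrary fixed monomial $\by^{\be} = \prod_{i=1}^n y_i^{e_i}$ on both sides of the identity. The left side contributes precisely $\chi(\Gr_{\be}^{A(T)}(\mathcal{M}(T,\alpha)))$, while the right side contributes the cardinality of the set $\{P \in \mathcal{P}(G(T,\alpha)) \mid y(P) = \by^{\be}\}$. Equating the two yields the corollary. There is no substantive obstacle: the heavy lifting was done in proving that the two $F$-polynomials coincide, and the corollary is simply the statement that once they coincide, their coefficients -- which have natural geometric interpretations as Euler characteristics of quiver Grassmannians on one side and combinatorial interpretations as weighted counts of good matchings on the other -- must agree term by term.
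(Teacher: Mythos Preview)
Your proposal is correct and takes essentially the same approach as the paper, which states the corollary as ``an immediate application'' of the equality $F_{\mathcal{M}(T,\alpha)}=F_{G(T,\alpha)}$ established in the proofs of Propositions~\ref{thm:bangle-of-tagged-arc-belongs-to-generic-basis} and~\ref{thm:bangle-of-simple-closed-curve-belongs-to-generic-basis} (and recalled in the paragraph immediately preceding the corollary). Your coefficient-extraction argument is exactly the intended one-line deduction.
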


Our ability to obtain the above result is quite peculiar. Note that, although $\mathcal{M}(T_\alpha,\alpha)$ is a well-defined representation, we have not provided an explicit computation of $\mathcal{M}(T,\alpha)$ in general, but are still able to compute its Euler-Poincaré characteristic. Of course, when $\punct=\varnothing$, the Jacobian algebra $A(T)$ is gentle, and $\mathcal{M}(T,\alpha)$ can be written down explicitly. When $\punct\neq\varnothing$ but the signature of $T$ is zero, the Jacobian algebra $A(T)$ is skewed-gentle, and $\mathcal{M}(T,\alpha)$ can be computed explicitly as well, cf. \cite{crawley1989functorial,geiss2023onhomomorphisms,geiss2023laminations}. However, when $\punct\neq\varnothing$ and $T$ is arbitrary, the decorated representation $\mathcal{M}(T,\alpha)$ still remains to be explicitly computed in general. It should be noticed that the naive candidate for $\mathcal{M}(T,\alpha)$, namely, the obvious string or band representation of the quiver $Q(T)$ induced by $\alpha$, typically fails to be annihilated by the cyclic derivatives of the potential $S(T)$, see e.g. \cite[Example 6.2.7]{labardini2010quivers}. Explicit computations of $\mathcal{M}(T,\alpha)$ have been carried out by the second named author in \cite{labardini2010quivers} in the following situations:
\begin{itemize}
    \item when $T$ is a tagged triangulation of positive signature and $\alpha$ is a tagged arc with at most one notch;
    \item when $T$ is a tagged triangulation of positive signature and $\alpha$ is a simple closed curve (this is only implicit in \cite{labardini2010quivers}, but one can check that the results proved therein apply in this situation).
\end{itemize}

\begin{prob}
    Compute the decorated representation $\mathcal{M}(T,\alpha)$ of $(Q(T),S(T))$ in general. To appreciate the complexity of this task we also direct the reader to ~\cite{domínguez2017arc} .
\end{prob}

On an arguably more important matter, one of the reasons why in this paper we have not considered surfaces with empty boundary whatsoever, is that the proof given in \cite{geiss2020generic} of the linear independence of the set of generic Caldero-Chapoton functions can definitely not be applied for such surfaces.

\begin{prob}
For a tagged triangulation $T$ of a punctured surface with empty boundary $(\Sigma,\mathbb{P})$, is the set of generic Caldero-Chapoton functions over $\mathcal{P}(Q(T),S(T))$ linearly independent? Is it a basis for the Caldero-Chapoton algebra of $\mathcal{P}(Q(T),S(T))$, or better, for the (upper, coefficient-free) cluster algebra of $\Sigma$? What is its relation to Musiker--Schiffler--Williams' bangle functions? Here, $S(T)$ is the potential defined in \cite{labardini2016quivers}.
\end{prob}

\subsection*{Acknowledgments}
We are grateful to Jan Schröer for many illuminating discussions.
The first author acknowledges partial support from PAPIIT grant IN116723
(2023-2025). The work of this paper originated during the third author’s back-to-back visits at IMUNAM courtesy of the first author’s CONACyT-239255 grant and a DGAPA postdoctoral fellowship. He is grateful for the stimulating environment IMUNAM fostered, and for the additional generous support received from the second author’s grants: CONACyT-238754 and a C\'{a}tedra Marcos Moshinsky.

\bibliographystyle{plain}

\bibliography{Biblio_bases}

\begin{thebibliography}{10}

\bibitem{bobiński2025genericallytauregularirreduciblecomponents}
Grzegorz Bobiński and Jan Schröer.
\newblock Generically $\tau$-regular irreducible components of module
  varieties.
\newblock {\em arXiv:2502.13709}, 2025.

\bibitem{butler1987auslander}
Michael C.~R. Butler and Claus~Michael Ringel.
\newblock Auslander-{R}eiten sequences with few middle terms and applications
  to string algebras.
\newblock {\em Communications in Algebra}, 15(1-2):145--179, 1987.

\bibitem{Canakci2015OnCluster}
Ilke Canakci, Kyungyong Lee, and Ralf Schiffler.
\newblock On cluster algebras from unpunctured surfaces with one marked point.
\newblock {\em Proc. Amer. Math. Soc. Ser. B}, 2:35--49, 2015.

\bibitem{carroll2015on}
Andrew~T. Carroll and Calin Chindris.
\newblock On the invariant theory for acyclic gentle algebras.
\newblock {\em Trans. Amer. Math. Soc.}, 367(5):3481--3508, 2015.

\bibitem{cerulli2013linear}
Giovanni Cerulli~Irelli, Bernhard Keller, Daniel Labardini-Fragoso, and
  Pierre-Guy Plamondon.
\newblock Linear independence of cluster monomials for skew-symmetric cluster
  algebras.
\newblock {\em Compos. Math.}, 149(10):1753--1764, 2013.

\bibitem{cerulli2012quivers}
Giovanni Cerulli~Irelli and Daniel Labardini-Fragoso.
\newblock Quivers with potentials associated to triangulated surfaces, {P}art
  {III}: tagged triangulations and cluster monomials.
\newblock {\em Compos. Math.}, 148(6):1833--1866, 2012.

\bibitem{cerulli2015caldero-chapoton}
Giovanni Cerulli~Irelli, Daniel Labardini-Fragoso, and Jan Schr\"{o}er.
\newblock Caldero-{C}hapoton algebras.
\newblock {\em Trans. Amer. Math. Soc.}, 367(4):2787--2822, 2015.

\bibitem{crawley2002irreducible}
William Crawley-Boevey and Jan Schr\"{o}er.
\newblock Irreducible components of varieties of modules.
\newblock {\em J. Reine Angew. Math.}, 553:201--220, 2002.

\bibitem{crawley1989functorial}
William~W. Crawley-Boevey.
\newblock Functorial filtrations. {II}. {C}lans and the {G}el'fand problem.
\newblock {\em J. London Math. Soc. (2)}, 40(1):9--30, 1989.

\bibitem{derksen2008quivers}
Harm Derksen, Jerzy Weyman, and Andrei Zelevinsky.
\newblock Quivers with potentials and their representations. {I}. {M}utations.
\newblock {\em Selecta Math. (N.S.)}, 14(1):59--119, 2008.

\bibitem{derksen2010quivers}
Harm Derksen, Jerzy Weyman, and Andrei Zelevinsky.
\newblock Quivers with potentials and their representations {II}: applications
  to cluster algebras.
\newblock {\em J. Amer. Math. Soc.}, 23(3):749--790, 2010.

\bibitem{domínguez2017arc}
Salomón Domínguez.
\newblock Arc representations.
\newblock {\em arXiv:1709.09521 [math.RT]}, 2017.

\bibitem{fock2006moduli}
V.~Fock and A.~Goncharov.
\newblock Moduli spaces of local systems and higher {T}eichm{\"u}ller theory.
\newblock {\em Publications Math{\'e}matiques de l'IH{\'E}S}, 103:1--211, 2006.

\bibitem{fock2007dual}
V.~Fock and A.~Goncharov.
\newblock Dual {T}eichm{\"u}ller and lamination spaces.
\newblock {\em Handbook of Teichm{\"u}ller Theory, Volume I}, pages 647--684,
  2007.

\bibitem{fomin2010totalp}
Sergey Fomin.
\newblock Total positivity and cluster algebras.
\newblock In {\em Proceedings of the {I}nternational {C}ongress of
  {M}athematicians. {V}olume {II}}, pages 125--145. Hindustan Book Agency, New
  Delhi, 2010.

\bibitem{fomin2008cluster}
Sergey Fomin, Michael Shapiro, and Dylan Thurston.
\newblock Cluster algebras and triangulated surfaces. {P}art {I}. cluster
  complexes.
\newblock {\em Acta Mathematica}, 201(1):83--146, 2008.

\bibitem{fomin2018cluster}
Sergey Fomin and Dylan Thurston.
\newblock Cluster algebras and triangulated surfaces {P}art {II}: {L}ambda
  lengths.
\newblock {\em Mem. Amer. Math. Soc.}, 255(1223):v+97, 2018.

\bibitem{fomin2002cluster}
Sergey Fomin and Andrei Zelevinsky.
\newblock Cluster algebras {I}: foundations.
\newblock {\em Journal of the American Mathematical Society}, 15(2):497--529,
  2002.

\bibitem{fomin2003cluster}
Sergey Fomin and Andrei Zelevinsky.
\newblock Cluster algebras {II}. {F}inite type classification.
\newblock {\em Inventiones Mathematicae}, 154(1):63--121, 2003.

\bibitem{fomin2007cluster}
Sergey Fomin and Andrei Zelevinsky.
\newblock Cluster algebras {IV}. {C}oefficients.
\newblock {\em Compositio Mathematica}, 143(1):112--164, 2007.

\bibitem{geiss2023onhomomorphisms}
Christof Gei\ss.
\newblock On homomorphisms and generically $\tau$-regular components for
  skewed-gentle algebras.
\newblock {\em arXiv:2307.10306, to appear in J. Algebra}, 2023.

\bibitem{geiss2016the}
Christof Gei\ss, Daniel Labardini-Fragoso, and Jan Schr\"{o}er.
\newblock The representation type of {J}acobian algebras.
\newblock {\em Adv. Math.}, 290:364--452, 2016.

\bibitem{geiss2020generic}
Christof Gei{\ss}, Daniel Labardini-Fragoso, and Jan Schr\"oer.
\newblock Generic {C}aldero-{C}hapoton functions with coefficients and
  applications to surface cluster algebras.
\newblock {\em arXiv:2007.05483}, 2020.

\bibitem{geiss2022schemes}
Christof Gei\ss, Daniel Labardini-Fragoso, and Jan Schr\"{o}er.
\newblock Schemes of modules over gentle algebras and laminations of surfaces.
\newblock {\em Selecta Math. (N.S.)}, 28(1):Paper No. 8, 78, 2022.

\bibitem{geiss2023laminations}
Christof Gei{\ss}, Daniel Labardini-Fragoso, and Jon Wilson.
\newblock Laminations of punctured surfaces as $\tau$-regular irreducible
  components.
\newblock {\em arXiv:2308.00792, 47pp., to appear in IMRN}, 2023.

\bibitem{geiss2012generic}
Christof Geiss, Bernard Leclerc, and Jan Schr\"{o}er.
\newblock Generic bases for cluster algebras and the {C}hamber ansatz.
\newblock {\em J. Amer. Math. Soc.}, 25(1):21--76, 2012.

\bibitem{geiss2023semicontinuous}
Christof Geiß, Daniel Labardini-Fragoso, and Jan Schröer.
\newblock Semicontinuous maps on module varieties.
\newblock {\em Journal für die reine und angewandte Mathematik (Crelles
  Journal), doi:10.1515/crelle-2024-0049}, 2024.

\bibitem{gekhtman2005cluster}
Michael Gekhtman, Michael Shapiro, and Alek Vainshtein.
\newblock Cluster algebras and {W}eil-{P}etersson forms.
\newblock {\em Duke Math. J.}, 127(2):291--311, 2005.

\bibitem{gross2018canonical}
Mark Gross, Paul Hacking, Sean Keel, and Maxim Kontsevich.
\newblock Canonical bases for cluster algebras.
\newblock {\em J. Amer. Math. Soc.}, 31(2):497--608, 2018.

\bibitem{haupt2012euler}
Nicolas Haupt.
\newblock Euler characteristics of quiver {G}rassmannians and {R}ingel-{H}all
  algebras of string algebras.
\newblock {\em Algebr. Represent. Theory}, 15(4):755--793, 2012.

\bibitem{kang2018monidal}
Seok-Jin Kang, Masaki Kashiwara, Myungho Kim, and Se-jin Oh.
\newblock Monoidal categorification of cluster algebras.
\newblock {\em J. Amer. Math. Soc.}, 31(2):349--426, 2018.

\bibitem{keller2012cluster}
Bernhard Keller.
\newblock Cluster algebras and derived categories.
\newblock In {\em Derived categories in algebraic geometry}, EMS Ser. Congr.
  Rep., pages 123--183. Eur. Math. Soc., Z\"{u}rich, 2012.

\bibitem{keller2020green}
Bernhard Keller and Laurent Demonet.
\newblock A survey on maximal green sequences.
\newblock In {\em Representation theory and beyond}, volume 758 of {\em
  Contemp. Math.}, pages 267--286. Amer. Math. Soc., [Providence], RI, [2020]
  \copyright 2020.

\bibitem{krause1991maps}
Henning Krause.
\newblock Maps between tree and band modules.
\newblock {\em J. Algebra}, 137(1):186--194, 1991.

\bibitem{labardini2010quivers}
Daniel Labardini-Fragoso.
\newblock {\em Quivers with potentials associated with triangulations of
  {R}iemann surfaces}.
\newblock ProQuest LLC, Ann Arbor, MI, 2010.
\newblock Thesis (Ph.D.)--Northeastern University.

\bibitem{labardini2016on}
Daniel Labardini-Fragoso.
\newblock On triangulations, quivers with potentials and mutations.
\newblock In {\em Mexican mathematicians abroad: recent contributions}, volume
  657 of {\em Contemp. Math.}, pages 103--127. Amer. Math. Soc., Providence,
  RI, 2016.

\bibitem{labardini2016quivers}
Daniel Labardini-Fragoso.
\newblock Quivers with potentials associated to triangulated surfaces, part
  {IV}: removing boundary assumptions.
\newblock {\em Selecta Math. (N.S.)}, 22(1):145--189, 2016.

\bibitem{leclerc2010cluster}
Bernard Leclerc.
\newblock Cluster algebras and representation theory.
\newblock In {\em Proceedings of the {I}nternational {C}ongress of
  {M}athematicians. {V}olume {IV}}, pages 2471--2488. Hindustan Book Agency,
  New Delhi, 2010.

\bibitem{lusztig2000semicanonical}
G.~Lusztig.
\newblock Semicanonical bases arising from enveloping algebras.
\newblock {\em Adv. Math.}, 151(2):129--139, 2000.

\bibitem{mandel2023bracelets}
Travis Mandel and Fan Qin.
\newblock Bracelets bases are theta bases.
\newblock {\em arXiv:2301.11101}, 2023.

\bibitem{mills2017maximal}
Matthew~R. Mills.
\newblock Maximal green sequences for quivers of finite mutation type.
\newblock {\em Adv. Math.}, 319:182--210, 2017.

\bibitem{muller2013locally}
Greg Muller.
\newblock Locally acyclic cluster algebras.
\newblock {\em Adv. Math.}, 233:207--247, 2013.

\bibitem{muller2014A=U}
Greg Muller.
\newblock {$\mathcal {A}=\mathcal{ U}$} for locally acyclic cluster algebras.
\newblock {\em SIGMA Symmetry Integrability Geom. Methods Appl.}, 10:Paper 094,
  8, 2014.

\bibitem{musiker2011positivity}
Gregg Musiker, Ralf Schiffler, and Lauren Williams.
\newblock Positivity for cluster algebras from surfaces.
\newblock {\em Advances in Mathematics}, 227(6):2241--2308, 2011.

\bibitem{musiker2013bases}
Gregg Musiker, Ralf Schiffler, and Lauren Williams.
\newblock Bases for cluster algebras from surfaces.
\newblock {\em Compositio Mathematica}, 149(2):217--263, 2013.

\bibitem{nakanishi2012on}
Tomoki Nakanishi and Andrei Zelevinsky.
\newblock On tropical dualities in cluster algebras.
\newblock In {\em Algebraic groups and quantum groups}, volume 565 of {\em
  Contemp. Math.}, pages 217--226. Amer. Math. Soc., Providence, RI, 2012.

\bibitem{penner2012decorated}
R.~Penner.
\newblock {\em Decorated Teichm{\"u}ller theory}, volume~1.
\newblock European Mathematical Society Publishing House, ETH-Zentrum SEW A27,
  2012.

\bibitem{plamondon2013generic}
Pierre-Guy Plamondon.
\newblock Generic bases for cluster algebras from the cluster category.
\newblock {\em Int. Math. Res. Not. IMRN}, 2013(10):2368--2420, 2012.

\bibitem{qin2020dual}
Fan Qin.
\newblock Dual canonical bases and quantum cluster algebras.
\newblock {\em arXiv preprint arXiv:003.13674}, 2020.

\bibitem{qin2019bases}
Fan Qin.
\newblock Bases for upper cluster algebras and tropical points.
\newblock {\em J. Eur. Math. Soc. (JEMS)}, 26(4):1255--1312, 2024.

\bibitem{qiu2017cluster}
Yu~Qiu and Yu~Zhou.
\newblock Cluster categories for marked surfaces: punctured case.
\newblock {\em Compos. Math.}, 153(9):1779--1819, 2017.

\bibitem{wilson2020surface}
Jon Wilson.
\newblock Surface cluster algebra expansion formulae via loop graphs.
\newblock {\em arXiv preprint arXiv:2006.13218}, 2020.

\end{thebibliography}

\end{document}